\def\0D{\Delta^{(0)}}
\def\1D{\Delta^{(1)}}
\def\Co{\,\square\,}
\newtheorem{theorem}{Theorem}[section]
\newtheorem{remark}[theorem]{Remark}
\newtheorem{proposition}[theorem]{Proposition}
\newtheorem{lemma}[theorem]{Lemma}
\newtheorem{corollary}[theorem]{Corollary}
\newtheorem{example}[theorem]{Example}
\def\build#1_#2^#3{\mathrel{\mathop{\kern 0pt#1}\limits_{#2}^{#3}}}
\newcommand{\ns}[1]{~\hspace{-3pt}_{\left<#1\right>}}
\newcommand{\ps}[1]{~\hspace{-3pt}^{^{\left(#1\right)}}}
\numberwithin{equation}{section}
 \newcommand{\ie}{{\it i.e.\/}\ }
\def\ve{\varepsilon}
\def\ot{\otimes}
\def\part{\partial}
\def\text{\hbox}
\def\ot{\otimes}
\def\Hom{\mathop{\rm Hom}\nolimits}
\def\Id{\mathop{\rm Id}\nolimits}
\def\build#1_#2^#3{\mathrel{
\mathop{\kern 0pt#1}\limits_{#2}^{#3}}}
\numberwithin{equation}{section}
\newcommand{\comment}[1]{\relax}
\begin{document}
\title{On Representation Theory of Total (Co)Integrals }
\author { Mohammad Hassanzadeh }
\curraddr{University of Windsor, Department of Mathematics and Statistics, Lambton Tower, Ontario, Canada.
}
\email{mhassan@uwindsor.ca}
\subjclass[2010]{06B15,  16T05, 11M55}
 \keywords{Representation theory, Hopf algebras, Noncommutative geometry}
\maketitle
\begin{abstract}

In this paper,  we show that total integrals and cointegrals  are new sources of stable anti Yetter-Drinfeld modules.
 We explicitly show that how special types of total  (co)integrals  can be used to provide both (stable) anti Yetter-Drinfeld   and Yetter-Drinfeld modules. We use these modules to classify total (co)integrals and (cleft) Hopf Galois (co)extensions for some examples of the Connes-Moscovici Hopf algebra, universal enveloping algebras and  polynomial algebras.

\end{abstract}

\section{ Introduction}
In this paper we study the representation theory of total integrals and total cointegrals. We show that  total (co)integrals are   new sources of stable anti Yetter-Drinfeld module.
This helps us to classify total (co)integrals and cleft Hopf Galois (co)extensions.  Stable anti Yetter-Drinfeld (SAYD) modules are suitable coefficients for Hopf cyclic homology \cite{CM98}, \cite{hkrs2} which is a strong algebraic tool in noncommutative geometry \cite{Connes book}. The way that total integrals can be used to produce Yetter-Drinfeld (YD) modules was noticed before in \cite{cfm}. In this paper we develop this idea. More precisely for any total (co)integral which is (co)algebra map we introduce an AYD and an YD module. Furthermore if the Hopf algebra $H$ has a modular pair in involution \cite{CM98} then we produce two  different AYD and YD modules. Then we replace the (co)algebra map property of the total (co)integral  by cleftness of a Hopf Galois (co)extension and similarly we construct all the four YD and AYD modules that we constructed in the previous case. The interesting fact here is that  the stability condition which can not  automatically be obtained for AYD modules constructed by total (co)integrals which are (co)algebra maps, can be obtained by cleft Hopf Galois (co)extensions for free. In fact if we have a  cleft Hopf Galois (co)extension then not only we obtain an AYD module defined by the total (co)integral but also we obtain the stability condition.

It is known that any   total (co)integral which is an (co)algebra map is convolution invertible. Conversely  total (co)integrals of any cleft Hopf Galois (co)extension have a close relation to some anti (co)algebra morphisms. We see that when  $H$ is (co)commutative any  Hopf Galois (co)extension provides a convolution invertible total integral. We use our AYD and YD modules  to classify total (co)integrals and Hopf Galois (co)extensions. As an example we see that there is no finite dimensional right (cleft) Hopf Galois extension over the Connes-Moscovici Hopf algebra. \\

 This paper is organized as follows:
 In the second section we review  the basics of total (co)integrals and cleft Hopf Galois (co)extensions.  In the third section we use total (co)integrals to produce (stable) anti Yetter-Drinfeld and Yetter-Drinfeld modules. For any total integral $f:H\longrightarrow A$ which is an algebra map we see that $A$ is an YD module  over $H$ in Proposition $3.2$ and  the quotient space $A_B= A/[A,B]$ is   an AYD module  over $H$ in Proposition $3.3$. If the Hopf algebra $H$ has a modular pair in involution then we construct a different AYD module in Proposition $3.4$ and a different YD module in Proposition $3.6$. Then we replace the algebra map property of the total integral by the condition that the Hopf Galois extension made by $(H,A)$
 is cleft.  In this case we recover all four YD and AYD modules that we constructed before and furthermore the AYD modules satisfy stability condition . Dually for a total cointegral $f: C\longrightarrow H$
 which is a coalgebra map we show that $C$ is an YD module over $H$ in Proposition $3.19$ and the subspace $C^D$ is an AYD module over $H$ in Proposition $3.20$.
 Again if the Hopf algebra $H$ has a modular pair in involution then we construct a different AYD module in Proposition $3.21$ and a different YD module in Remark $3.23$.
 Then we replace  the coalgebra map property of the total cointegral by the condition that Hopf Galois coextension made by $(H, C)$ is cleft.
 Similarly we recover all four YD and AYD modules that we constructed before.
 In the last section, we introduce several examples of our results for the Connes-Moscovici Hopf algebra,   universal enveloping algebras of a Lie algebra, and polynomial algebras.

\bigskip

\textbf{Acknowledgments}:
The author  deeply appreciates \emph{Atabey Kaygun} for his continues collaboration and ideas in the whole process of this work specially his carefully reading the manuscript. The author would also like to thank \emph{Gabriella Bohm} for  her valuable help  specially her idea of the statement and the proof of Lemma \ref{inverse-co}. At the end the author would like to appreciate  \emph{	Mihai Doru Staic} for his helpful comments on stable anti Yetter-Drinfeld modules.

\bigskip

\textbf{Notations}:
We denote a Hopf algebra by $H$ and we assume all Hopf algebras in this paper have an invertible antipode. All Hopf algebra, algebras and coalgebras in this paper are on a filed. We use the Sweedler notation $\Delta(c)=c\ps{1}\ot c\ps{2}$ for the coproduct of a coalgebra (Hopf algebra). The right coaction of a  Hopf algebra   on a comodule $M$ is denoted by summation notation $m\longmapsto m\ns{0}\ot m\ns{1}$. For all total integrals and cointegrals we assume that all modules or comodules over $H$ (co)act from right.
\tableofcontents

\section{Preliminaries}
In this section we review the properties of total (co)integrals specially the ones  which are (co)algebra morphisms and  cointegrals of cleft Hopf Galois (co)extensions.  For more information about this section see \cite{schn}, \cite{dmr}, \cite{DT}, \cite{bell}, \cite{cfm}.\\

 Let $H$ be Hopf algebra and $A$ be a right $H$-comodule algebra with the coaction $a\longmapsto a\ns{0}\ot a\ns{1}$. The coinvariant space of the coaction is the subalgebra $B=\{a\in A, ~ \rho(a)=a\ot1_H\}$. The extension $A(B)^H$ is called (right) Hopf Galois if the canonical map
$$ can\colon  A\ot_B A\longrightarrow A\ot H, \quad a\ot a'\longmapsto aa'\ns{0}\ot a'\ns{1},$$ is bijective.\\

Dually let $C$ be a right $H$-module coalgebra.  The set
$$I=span <  ch -\varepsilon(h)c>,$$ is a two-sided coideal of $C$ and $D=C/I$ is a coalgebra. The surjection  $\pi\colon C\to D$ defines a $C$-bicomodule structure on $D$. This coextension is called a (right) $H$-Galois  if the  canonical map
\begin{equation}
   can\colon  C\ot H \longrightarrow C\Box_D C, ~~~ c\ot h\longmapsto c\ps{1}\ot c\ps{2}h,
\end{equation}
is a bijection. We denote a Hopf Galois coextension by $C(D)_H$. \\

Here we recall the main object of the paper.  For any $H$-comodule algebra $A$ the map $f: H\longrightarrow A$ is called a total integral if it is a unital $H$-comodule map, \ie $f(1)=1$ and $f (h\ps{1})\ot h\ps{2}=f(h)\ns{0}\ot f(h)\ns{1}$. Dually for any $H$-module coalgebra $C$   a total cointegral   is  a counital $H$-module map $f \colon  C \longrightarrow H$  \ie     $\varepsilon(f(c))= \varepsilon(c)$ and $f(ch)= f(c)h$.\\

Let us recall that a Hopf Galois coextension $C(D)_H$ is called cleft  if there is a total cointegral $f\colon  C\longrightarrow H$    which is convolution invertible. This means that there exists a linear map $f^{-1}\colon  C\longrightarrow H$  such that $f\ast f^{-1}(c)=\varepsilon(c)1_H.$ By \cite[Lemma 2.3]{dmr}  counitality condition can be ignored for invertible cointegrals because if $f\colon  C\longrightarrow H$ is an invertible $H$-module map, then the right $H$-module map $f'\colon  C\longrightarrow H$ given by $f'(c)= \varepsilon(f^{-1}(c\ps{1})f(c\ps{2})$ is a counital invertible total cointegral. Dually a Hopf Galois extension  $A(B)^H$ is called cleft if there is a total integral $f: H\longrightarrow A$ which is convolution invertible. Similarly the unitality condition could be omitted.\\

  It is known that any  total cointegral  $f\colon  C\longrightarrow H$  which is  a coalgebra map is convolution invertible where the inverse is given by $f^{-1}=S\circ f$. Here $S$ is the antipode of the Hopf algebra $H$.
Therefore any Hopf Galois coextension with a total cointegral which is a coalgebra morphism is a cleft coextension.
In a special case when $S^2=\Id$ (for example when $H$ is commutative or cocommutative ) or when $C$ is cocommutative,  any total cointegral which is anti-coalgebra map is convolution invertible. One notes that the inverse map $f^{-1}$ is not a $H$-module map, but it satisfies

\begin{equation}\label{inverse-module}
   f^{-1}(ch)=  S(h) f^{-1}(c) .
\end{equation}
  This is because the maps $m\circ tw \circ(f^{-1} \ot S)$ and $f^{-1} \circ \gamma $ appeared in the right and left hand sides of \eqref{inverse-module} have the same two sided inverse $m\circ (f\ot \Id_H)$ in convolution algebra $\Hom(C\ot H, H)$. Here $m$ is the multiplication map of $H$ and $\gamma\colon  C\ot H\longrightarrow C$ is the right $H$-action on $C$ and $tw\colon  H\ot H\longrightarrow H\ot H$ is the twist map.
 Dually if the total integral $f: H\longrightarrow A$ is an algebra map then $f^{-1}:=f\circ S$ defines a convolution inverse of $f$.  Similarly the inverse map $f^{-1}:H\longrightarrow A$ is not a comodule map but it satisfies
 \begin{equation}
   f^{-1}(h)\ns{0}\ot f^{-1}(h)\ns{1}=f^{-1}(h\ps{2})\ot S(h\ps{1}).
 \end{equation}

\section{AYD and YD modules from total (co)integrals }
In this section, we study Yetter-Drinfeld and anti Yetter-Drinfeld modules constructed by total (co)integrals.
First, we explain how total (co)integrals which are (co)algebra maps can produce different types of anti Yetter-Drinfeld  and Yetter-Drinfeld modules. Then we show that those total (co)integrals  which are also (co)algebra maps and  they satisfy  certain properties yield stability condition and therefore stable anti Yetter-Drinfeld modules. More precisely for any total (co)integral which is a (co)algebra map we introduce an AYD and an YD module. Furthermore if the Hopf algebra $H$ has a modular pair in involution  then we produce two  different AYD and YD modules.
Then we replace the (co)algebra map property of the total (co)integral  by cleftness of Hopf Galois (co)extensions and similarly we construct all the four YD and AYD modules that we constructed in the previous case. We remind that anti Yetter-Drinfeld modules  are suitable coefficients for Hopf cyclic cohomology. Here we recall the definition of three  types anti Yetter-Drinfeld modules that will appear later in this paper.
The   module and comodule  $M$ over a Hopf algebra $H$ is called an anti-Yetter-Drinfeld module  \cite {hkrs1}  iff the action and coaction are compatible in the following sense;
\begin{align*}
  & \rho(hm)= h\ps{2}m\ns{0}\ot h\ps{3}m\ns{1}S(h\ps{1}), \quad ~~~~~~ \text{if M is a left module and a right comodule,}\\
  &\rho(mh)= S(h\ps{3})m\ns{-1}h\ps{1}\ot m\ns{0}h\ps{2}, \quad ~~~~~ \text{if M is a right module and a left comodule,}\\
  &\rho(mh)= m\ns{0}h\ps{2}\ot S^{-1}(h\ps{1})m\ns{1}h\ps{3}, \quad ~~~ \text{if M is a right module and a right comodule.}
\end{align*}
Furthermore the module $M$ is called stable if  $m\ns{1}m\ns{0}=m$, $m\ns{0}m\ns{-1}=m$, and $m\ns{0}m\ns{1}=m$ for the left-right, right-left, and right-right cases, respectively.
The Yetter-Drinfeld condition simply is obtained by replacing $S$ and $S^{-1}$ by each other in the anti Yetter-Drinfeld condition.

\subsection{(Anti) Yetter-Drinfeld modules constructed by total integrals}
In this subsection, we use total integrals to produce (stable) anti Yetter-Drinfeld and Yetter-Drinfeld modules.
 Let us first recall the following fact.
\begin{lemma}
  Let  $f\colon  H\longrightarrow A$ be a total integral which is an algebra map. Then  $A$ and the subalgebra $A^B=\{a\in A, ~ ba=ab\}$, the centralizer of $A$ in $B$, are both right $H$-modules by the action  given by
  \begin{equation}
    ah= f^{-1}(h\ps{1})a f(h\ps{2}).
  \end{equation}
\end{lemma}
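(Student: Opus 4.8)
The plan is to verify the two right-module axioms for the operation $a\cdot h:=f^{-1}(h\ps{1})\,a\,f(h\ps{2})$ on $A$, and then to check that this operation carries $A^B$ into itself; that $A^B$ is a subalgebra is automatic, being the commutant of $B$ in $A$. For the module axioms I would use three facts about $f$: that $f$ is an algebra map ($f(hk)=f(h)f(k)$, $f(1)=1$), that its convolution inverse is $f^{-1}=f\circ S$ and is therefore an \emph{anti}-algebra map, and that $f\ast f^{-1}=f^{-1}\ast f=\varepsilon(\cdot)1$. Unitality $a\cdot 1=a$ is immediate from $f(1)=f^{-1}(1)=1$. For associativity I would expand $(a\cdot h)\cdot k=f^{-1}(k\ps{1})f^{-1}(h\ps{1})\,a\,f(h\ps{2})f(k\ps{2})$, then use anti-multiplicativity of $f^{-1}$ and multiplicativity of $f$ to rewrite the outer factors as $f^{-1}(h\ps{1}k\ps{1})$ and $f(h\ps{2}k\ps{2})$, and finally invoke $\Delta(hk)=\Delta(h)\Delta(k)$ to recognise this as $a\cdot(hk)$.

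The substantive part is stability of $A^B$ under the action, and this is where I expect the main difficulty. The tool I would set up first is the associated Hopf action of $H$ on $B$, namely $h\rightharpoonup b:=f(h\ps{1})\,b\,f^{-1}(h\ps{2})$. The first thing to prove is that this genuinely lands in $B$, i.e. that $h\rightharpoonup b$ is coinvariant: applying $\rho$ and using the comodule-map identities $\rho(f(h))=f(h\ps{1})\ot h\ps{2}$ and $\rho(f^{-1}(h))=f^{-1}(h\ps{2})\ot S(h\ps{1})$ recalled in the Preliminaries, together with $\rho(b)=b\ot 1$, the $H$-leg collapses through $h\ps{2}S(h\ps{3})=\varepsilon(h\ps{2})1$ by the antipode axiom, leaving $\rho(h\rightharpoonup b)=(h\rightharpoonup b)\ot 1$. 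This coinvariance is the one genuinely Hopf-algebraic input and is the step most prone to Sweedler index bookkeeping errors.

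With $h\rightharpoonup b\in B$ in hand, the commutation relation $f(h)\,b=(h\ps{1}\rightharpoonup b)\,f(h\ps{2})$ follows by expanding the right-hand side and cancelling $f^{-1}(h\ps{2})f(h\ps{3})=\varepsilon(h\ps{2})1$. I would then finish by the direct computation, for $a\in A^B$ and $b\in B$:
\begin{align*}
(a\cdot h)\,b
&= f^{-1}(h\ps{1})\,a\,f(h\ps{2})\,b
 = f^{-1}(h\ps{1})\,a\,(h\ps{2}\rightharpoonup b)\,f(h\ps{3})\\
&= f^{-1}(h\ps{1})\,(h\ps{2}\rightharpoonup b)\,a\,f(h\ps{3})
 = f^{-1}(h\ps{1})f(h\ps{2})\,b\,f^{-1}(h\ps{3})\,a\,f(h\ps{4})
 = b\,(a\cdot h),
\end{align*}
where the third equality uses that $a$ commutes with the element $h\ps{2}\rightharpoonup b\in B$, and the last uses $f^{-1}\ast f=\varepsilon(\cdot)1$ to collapse the first two legs. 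This shows $a\cdot h\in A^B$, completing the proof. As a byproduct the same type of cancellation gives $(aa')\cdot h=(a\cdot h\ps{1})(a'\cdot h\ps{2})$, so that $A$ is in fact a right $H$-module algebra under $\cdot$, although only the weaker module statement is needed here.
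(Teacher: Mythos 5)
Your proof is correct, and it does strictly more than the paper: the paper states this lemma with no proof at all, recalling it as a known fact, and the only justification it ever offers (inside the proof of Proposition \ref{lr-anti}) is exactly your first paragraph — associativity from the algebra map property of $f$ and the anti-algebra map property of $f^{-1}=f\circ S$, unitality from unitality of $f$ and $f^{-1}$. Where you genuinely diverge is the stability of $A^B$ under the action, which the paper never addresses in this algebra-map setting. Your route — introducing the auxiliary action $h\rightharpoonup b= f(h\ps{1})\,b\,f^{-1}(h\ps{2})$, checking via $\rho(f(h))=f(h\ps{1})\ot h\ps{2}$, $\rho(f^{-1}(h))=f^{-1}(h\ps{2})\ot S(h\ps{1})$ and $h\ps{2}S(h\ps{3})=\varepsilon(h\ps{2})1$ that it lands in $B$, then commuting it past $a\in A^B$ and collapsing with $f^{-1}\ast f=\varepsilon(\cdot)1$ — is sound; it is a Miyashita--Ulbrich-type argument, and every identity you invoke is available in the paper's Preliminaries. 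The paper's analogue of this step appears only later, in the cleft Hopf--Galois case, where stability of $A^B$ is instead extracted from the fact that $\kappa(h)=f^{-1}(h\ps{1})\ot f(h\ps{2})$ lies in the centralizer $(A\ot_B A)^B$: for $a\in A^B$ the map $x\ot_B y\mapsto xay$ is well defined on $A\ot_B A$, and applying it to the centrality relation $b\kappa^1(h)\ot\kappa^2(h)=\kappa^1(h)\ot\kappa^2(h)b$ gives $b(ah)=(ah)b$ at once. The two arguments are essentially equivalent — your coinvariance computation is the same Sweedler manipulation that shows $\kappa$ is $B$-central — but yours is self-contained and needs no Galois machinery, which is the appropriate choice here since $f$ is only assumed to be an algebra map, not the cleaving map of a Galois extension. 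Your closing observation that $A$ is in fact a right $H$-module algebra is also correct, though not needed.
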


 The following proposition  shows how special types of total integrals yield Yetter-Drinfeld modules.

\begin{proposition}\cite{cfm}\label{YD-extension}
 Let $H$ be a Hopf algebra,  $A$ be a right $H$-comodule algebra and  $f\colon H\longrightarrow A$ be a total integral which is an algebra map. Then $A$ is a right-right Yetter-Drinfeld module by the original coaction of $H$ and the following action;
 \begin{equation}
   ah= f^{-1}(h\ps{1})a f(h\ps{2}).
 \end{equation}
\end{proposition}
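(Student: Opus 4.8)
The plan is to keep the comodule structure equal to the original coaction $\rho(a)=a\ns{0}\ot a\ns{1}$ and to take the module structure to be the one already furnished by the preceding Lemma, so that the only assertion still needing proof is the right-right Yetter-Drinfeld compatibility
$$\rho(ah)=\bigl(a\ns{0}h\ps{2}\bigr)\ot S(h\ps{1})\,a\ns{1}\,h\ps{3},$$
where on the left $ah=f^{-1}(h\ps{1})\,a\,f(h\ps{2})$ is the twisted action and on the right $a\ns{0}h\ps{2}$ again abbreviates that same action. That $A$ is simultaneously a right $H$-module (by the Lemma) and a right $H$-comodule (since $A$ is an $H$-comodule algebra) is already in hand, so nothing beyond the displayed identity must be checked.

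The three inputs I would feed into the verification are: first, that $\rho$ is an algebra map, which holds because $A$ is an $H$-comodule algebra; second, the total-integral identity $\rho(f(k))=f(k\ps{1})\ot k\ps{2}$; and third, the coaction formula $\rho(f^{-1}(k))=f^{-1}(k\ps{2})\ot S(k\ps{1})$ recorded at the end of Section 2 for the convolution inverse of an algebra-map total integral. The presence of the honest antipode $S$ (rather than $S^{-1}$) in this last identity is precisely what will generate the Yetter-Drinfeld, as opposed to the anti Yetter-Drinfeld, form of the compatibility.

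Concretely, I would apply $\rho$ to $ah=f^{-1}(h\ps{1})\,a\,f(h\ps{2})$ and use multiplicativity of $\rho$ to write it as a product of three factors in $A\ot H$. Substituting the two coaction formulas and using coassociativity to spread the coproduct of $h$ over four legs gives
$$\rho(ah)=\bigl(f^{-1}(h\ps{2})\ot S(h\ps{1})\bigr)\bigl(a\ns{0}\ot a\ns{1}\bigr)\bigl(f(h\ps{3})\ot h\ps{4}\bigr).$$
Carrying out the componentwise multiplication in $A\ot H$ yields
$$\rho(ah)=f^{-1}(h\ps{2})\,a\ns{0}\,f(h\ps{3})\ot S(h\ps{1})\,a\ns{1}\,h\ps{4}.$$
Finally I would re-bracket the Sweedler legs: grouping $h\ps{2}$ and $h\ps{3}$ as the two halves of a single leg recovers $f^{-1}(h\ps{2})\,a\ns{0}\,f(h\ps{3})=a\ns{0}h\ps{2}$ for a three-fold coproduct, while relabelling $h\ps{4}$ as $h\ps{3}$ turns the second tensor factor into $S(h\ps{1})\,a\ns{1}\,h\ps{3}$. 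The result is exactly the right-right Yetter-Drinfeld condition, finishing the argument.

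I do not expect a genuine obstacle here: the computation is essentially bookkeeping, and the one place demanding care is keeping the Sweedler indices aligned when passing between the four-leg expression produced by $\rho$ and the three-leg form in which the Yetter-Drinfeld condition is phrased. The single substantive ingredient is the inverse-coaction formula for $f^{-1}$, whose derivation (through convolution inverses, as indicated in Section 2) is the step where the algebra-map hypothesis on $f$ is actually consumed; granted that formula, multiplicativity of $\rho$ together with coassociativity does the rest.
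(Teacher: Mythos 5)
Your proof is correct, and since the paper gives no proof of this proposition at all (it is cited from \cite{cfm}), the relevant comparison is with the paper's proofs of the parallel statements: your argument --- multiplicativity of $\rho$, the comodule-map property of $f$, the twisted coaction formula $f^{-1}(h)\ns{0}\ot f^{-1}(h)\ns{1}=f^{-1}(h\ps{2})\ot S(h\ps{1})$, and Sweedler reindexing --- is exactly the computation the paper carries out for the AYD analogue (Proposition \ref{lr-anti}) and for the Hopf--Galois version via $\kappa$, where the identity \eqref{nice relation} packages the same two coaction formulas. No gaps; the module structure is indeed supplied by the preceding lemma and the comodule structure by the comodule-algebra hypothesis, so the displayed compatibility is the only thing to check.
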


In the following proposition we introduce an  anti Yetter-Drinfeld module  constructed by special type of total integrals.

\begin{proposition}\label{lr-anti}
  Let $H$ be a Hopf algebra,  $A$ be a right $H$-comodule algebra and  $f\colon H\longrightarrow A$ be a total integral which is an algebra map. Then $A_B=A/[A,B]$    is a left-right  anti Yetter-Drinfeld module over $H$ by the original coaction of $H$ and the following action;
 \begin{equation}
   ha= f(h\ps{2})a f^{-1}(h\ps{1}).
 \end{equation}
 Furthermore if
 \begin{equation}\label{stable}
   a\ns{0}f^{-1}(a\ns{1}\ps{1}) f(a\ns{1}\ps{2})= a,
 \end{equation}
  then this action is stable.
\end{proposition}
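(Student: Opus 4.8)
The plan is to equip the quotient $A_B = A/[A,B]$ with the stated left $H$-action $h\triangleright a = f(h\ps{2})\,a\,f^{-1}(h\ps{1})$ and the original right $H$-coaction, and to verify, in order: (i) both structures descend to $A_B$; (ii) the left--right anti Yetter--Drinfeld compatibility; and (iii) stability under \eqref{stable}. Throughout I would exploit that, since $f$ is an algebra map, its convolution inverse is $f^{-1}=f\circ S$, so that $f^{-1}$ is anti-multiplicative, $f^{-1}(hk)=f^{-1}(k)f^{-1}(h)$, satisfies the twisted comodule identity $f^{-1}(h)\ns{0}\ot f^{-1}(h)\ns{1}=f^{-1}(h\ps{2})\ot S(h\ps{1})$, and obeys $f^{-1}(h\ps{1})f(h\ps{2})=\varepsilon(h)1=f(h\ps{1})f^{-1}(h\ps{2})$.

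First I would check the module axioms on $A$ itself: associativity $h'\triangleright(h\triangleright a)=(h'h)\triangleright a$ follows from multiplicativity of $f$ on the left-hand factors and anti-multiplicativity of $f^{-1}$ on the right-hand factors, and unitality from $f(1)=f^{-1}(1)=1$. For the descent to $A_B$ I must show that $[A,B]$ is stable under both structures. For the coaction this is clean: since $\rho(b)=b\ot 1$ and $\rho$ is an algebra map, $\rho([a,b])=[a\ns{0},b]\ot a\ns{1}\in[A,B]\ot H$, so the coaction passes to $A_B$. The analogous claim for the action, that $h\triangleright[a,b]\in[A,B]$, is the delicate point and is discussed below.

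For the anti Yetter--Drinfeld compatibility I would compute $\rho(h\triangleright a)$ directly, using that $\rho$ is multiplicative together with the comodule-map property $\rho(f(h\ps{2}))=f(h\ps{2})\ot h\ps{3}$ and the twisted identity $\rho(f^{-1}(h\ps{1}))=f^{-1}(h\ps{2})\ot S(h\ps{1})$. Expanding with an iterated coproduct yields
\[
\rho(h\triangleright a)=f(h\ps{3})\,a\ns{0}\,f^{-1}(h\ps{2})\ot h\ps{4}\,a\ns{1}\,S(h\ps{1}),
\]
and recognizing the first leg as $h\ps{2}\triangleright a\ns{0}$ identifies the right-hand side with $h\ps{2}a\ns{0}\ot h\ps{3}\,a\ns{1}\,S(h\ps{1})$, which is exactly the left--right AYD law. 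Since this already holds on $A$, it descends to $A_B$ once the two structures do.

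Finally, for stability I would compute the action of $m\ns{1}$ on $m\ns{0}$, namely $a\ns{1}\triangleright a\ns{0}=f(a\ns{1}\ps{2})\,a\ns{0}\,f^{-1}(a\ns{1}\ps{1})$, and compare it with \eqref{stable}, which supplies $a\ns{0}f^{-1}(a\ns{1}\ps{1})f(a\ns{1}\ps{2})=a$; in $A_B$ these two expressions differ only by the position of the factor $f(a\ns{1}\ps{2})$, so stability reduces to reconciling the two orderings modulo $[A,B]$. I expect the main obstacle to be precisely this descent-and-reordering issue for the \emph{action}: unlike the coaction, the twisted conjugation $a\mapsto f(h\ps{2})\,a\,f^{-1}(h\ps{1})$ links its two factors across the coproduct and does not visibly preserve commutators with $B$, so establishing that $[A,B]$ is a submodule, and that the stability reordering is legitimate, would require careful use of the defining relation $\overline{xb}=\overline{bx}$ of $A_B$ together with the coinvariance $\rho(b)=b\ot 1$, rather than any single elementary manipulation.
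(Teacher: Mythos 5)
Your proposal coincides with the paper's proof exactly on the parts the paper actually writes out: the paper's argument is the one-line remark that associativity and unitality follow from the (anti)multiplicativity and unitality of $f$ and $f^{-1}$, followed by the same five-line computation you describe, using $\rho(f(h))=f(h\ps{1})\ot h\ps{2}$ and the twisted identity $\rho(f^{-1}(h))=f^{-1}(h\ps{2})\ot S(h\ps{1})$, ending with ``the stability condition is obvious.'' The paper never discusses whether either structure descends to $A_B$, so on the two points you flag you are being more careful than the source. However, flagging is not proving: as written, your proposal leaves open (a) that $[A,B]$ is preserved by $h\triangleright(-)$, and (b) the cyclic reordering needed to deduce stability from \eqref{stable}; both are genuine obligations, and both can be closed.

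The key for (b) is a lemma you nearly wrote down: $\sum a\ns{0}f^{-1}(a\ns{1}\ps{1})\ot a\ns{1}\ps{2}$ lies in $B\ot H$. Indeed, using multiplicativity of $\rho$, the twisted identity for $f^{-1}$, and $h\ps{1}S(h\ps{2})=\varepsilon(h)1$,
\begin{align*}
(\rho\ot\Id)\bigl(a\ns{0}f^{-1}(a\ns{1}\ps{1})\ot a\ns{1}\ps{2}\bigr)
&=a\ns{0}f^{-1}(a\ns{1}\ps{3})\ot a\ns{1}\ps{1}S(a\ns{1}\ps{2})\ot a\ns{1}\ps{4}\\
&=a\ns{0}f^{-1}(a\ns{1}\ps{1})\ot 1_H\ot a\ns{1}\ps{2},
\end{align*}
so writing $\sum a\ns{0}f^{-1}(a\ns{1}\ps{1})\ot a\ns{1}\ps{2}=\sum_i b_i\ot h_i$ with $b_i\in B$, the trace relation $\overline{xb}=\overline{bx}$ of $A_B$ gives
\begin{equation*}
a\ns{1}\triangleright\overline{a\ns{0}}=\sum_i\overline{f(h_i)\,b_i}=\sum_i\overline{b_i\,f(h_i)}
=\overline{a\ns{0}f^{-1}(a\ns{1}\ps{1})f(a\ns{1}\ps{2})}=\overline{a},
\end{equation*}
the last equality being \eqref{stable}. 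So the reordering you worried about is legitimate precisely because the reordered product has its left-hand factor in $B$; this is also why the paper's hypothesis is stated in that order. For (a), you are right that the elementary relation alone does not suffice; the clean route is to note that an algebra-map total integral is automatically convolution invertible ($f^{-1}=f\circ S$), so the extension is cleft and hence Hopf--Galois by Doi--Takeuchi. Then $f^{-1}(h\ps{1})\ot_B f(h\ps{2})=can^{-1}(1_A\ot h)$ lies in $(A\ot_B A)^B$, since both $\sum bf^{-1}(h\ps{1})\ot_B f(h\ps{2})$ and $\sum f^{-1}(h\ps{1})\ot_B f(h\ps{2})b$ are sent to $b\ot h$ by the injective map $can$. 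Feeding this centralizer identity through the map $A\ot(A\ot_B A)\ra A_B$, $a\ot x\ot y\mapsto\overline{yax}$ (well defined by the trace relation), yields $\overline{f(h\ps{2})\,ab\,f^{-1}(h\ps{1})}=\overline{f(h\ps{2})\,ba\,f^{-1}(h\ps{1})}$, which is exactly the descent of the action. This is the mechanism the paper itself invokes, via Jara--Stefan, only in its later propositions on cleft extensions; in the proof of the present proposition it is omitted silently, and the same observation shows that the hypotheses here already place one in the cleft-Galois setting.
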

\begin{proof}
  The action is associative by the algebra map property of $f$ and anti algebra map property of $f^{-1}$ and it is unital by unitality of $f$ and $f^{-1}$. The following computation shows the AYD condition.
  \begin{align*}
    &(ha)\ns{0}\ot (ha)\ns{1}\\
    &=f(h\ps{2})\ns{0}a\ns{0}f^{-1}(h\ps{1})\ns{0}\ot f(h\ps{2})\ns{1}a\ns{1}f^{-1}(h\ps{1})\ns{1}\\
    &=f(h\ps{2}\ps{1})a\ns{0}f^{-1}(h\ps{1}\ps{2})\ot h\ps{2}\ps{2}a\ns{1}S(h\ps{1}\ps{1})\\
    &=f(h\ps{3})a\ns{0}f^{-1}(h\ps{2})\ot h\ps{4}a\ns{1}S(h\ps{1})\\
    &=f(h\ps{2}\ps{2})a\ns{0}f^{-1}(h\ps{2}\ps{1})\ot h\ps{3}a\ns{1}S(h\ps{1})\\
    &=h\ps{2}a\ns{0}\ot h\ps{3}a\ns{1}S(h\ps{1}).
  \end{align*}
  The stability condition is obvious.
\end{proof}

By far from any total integral which is an algebra map we have an YD over $A$ \cite{cfm} and an AYD over $A_B=A/[A,B]$.
Now,  we construct different AYD and YD modules by a total integral. To do this, we need a condition on the Hopf algebra $H$. Let us recall the notion of modular pair in involution \cite{CM98}. Let $k$ be the ground field of a Hopf algebra $H$. Any unital algebra map  $\delta\colon  H\longrightarrow k$ is called a character. If $\sigma\in H$ is a group-like element \ie $\Delta(\sigma)=\sigma\ot \sigma$ , then the pair $(\delta, \sigma)$ is called a modular pair if $\delta(\sigma)=1$. Furthermore if $ \widetilde{S}^2(h)=\sigma h \sigma^{-1}$ where $\widetilde{S}(h)=\delta(h\ps{1}) S(h\ps{2})$ then $(\delta, \sigma)$ is called a modular pair in involution. We remind that the notion of modular pair in involution has important role to define Hopf cyclic cohomology \cite{CM98}.

\begin{proposition}\label{yd to ayd}
  Let $H$ be a Hopf algebra over a field $k$ with a modular pair in involution $(\delta, \sigma)$.
  If $f\colon  H^{cop}\longrightarrow A$ is a total integral which is an algebra map then $A\ot ^{\delta}k_{\sigma}$ is af right-right anti Yetter-Drinfeld module over $H^{cop}$ by the  action and coaction given by
  \begin{equation}
      (a\ot 1_k)h=f^{-1}(h\ps{1})af(h\ps{2}) \ot \delta(h\ps{3}), \quad ~~ a\ot 1_k\longmapsto a\ns{0} \ot 1_k \ot a\ns{1}\sigma.
  \end{equation}
Furthermore this action is stable if
\begin{equation}
   f^{-1}(\sigma) f^{-1}(a\ns{1}\ps{1})  a\ns{0}  f( a\ns{1}\ps{2})f(\sigma) \delta(a\ns{1}\ps{3})= a
\end{equation}
\end{proposition}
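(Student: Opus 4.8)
The plan is to verify three things for $M = A \otimes{}^{\delta}k_\sigma$: that the given formula is a well-defined unital right $H^{cop}$-action, that the coaction is a well-defined right $H^{cop}$-coaction, and that the right-right AYD compatibility holds. The action is essentially the one from the preceding lemma twisted by the character $\delta$; since $\delta$ is an algebra map and $\delta(\sigma)=1$, associativity and unitality follow from the algebra-map property of $f$, the anti-algebra-map property of $f^{-1}$, and the convolution-inverse identity $f\ast f^{-1}=\varepsilon 1$, exactly as in the proof of Proposition \ref{lr-anti} with the extra $\delta$-factor tracked along. For the coaction, I would check coassociativity and counitality: the factor $a\ns{1}\sigma$ behaves well because $\sigma$ is group-like, so $\Delta(a\ns{1}\sigma)=a\ns{1}\ps{1}\sigma\ot a\ns{1}\ps{2}\sigma$, and $\varepsilon(a\ns{1}\sigma)=\varepsilon(a\ns{1})$. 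One must keep in mind that coproducts are taken in $H^{cop}$, so the order of Sweedler legs is reversed relative to $H$.

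The core computation is the right-right AYD condition $\rho(mh)=m\ns{0}h\ps{2}\ot S^{-1}(h\ps{1})m\ns{1}h\ps{3}$, with all $\Delta$ and $S$ now those of $H^{cop}$. First I would expand $\rho((a\ot1_k)h)$ by applying the coaction to $f^{-1}(h\ps{1})af(h\ps{2})\delta(h\ps{3})$; using that $f$ is a comodule map and that $f^{-1}$ satisfies the twisted coaction identity $f^{-1}(h)\ns{0}\ot f^{-1}(h)\ns{1}=f^{-1}(h\ps{2})\ot S(h\ps{1})$ recorded at the end of the Preliminaries, together with the comodule-algebra property of $A$, the coaction pushes through the product. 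Collecting the $H$-legs produces a term of the shape $S(h)\cdot a\ns{1}\cdot h \cdot \sigma$ with appropriate Sweedler indices, and the right-hand side of the target identity involves $S^{-1}$ of $H^{cop}$, which equals $S$ of $H$. The heart of the matter is then matching these two expressions, and this is precisely where the modular-pair-in-involution hypothesis enters: one uses $\widetilde S(h)=\delta(h\ps{1})S(h\ps{2})$ and $\widetilde S^2(h)=\sigma h\sigma^{-1}$ to convert the leftover $\delta$-factors and the conjugation by $\sigma$ into the required antipode legs. I expect this reconciliation — correctly bookkeeping the cop-antipode, the $\delta$ character, and the group-like $\sigma$ simultaneously — to be the main obstacle.

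Two points deserve care. First, $M$ is a quotient/tensor involving the one-dimensional module ${}^\delta k_\sigma$, so strictly speaking one should confirm the action and coaction descend well; since $k_\sigma$ is one-dimensional this is automatic once the formulas are shown to be well-defined on $A$. Second, the passage to $H^{cop}$ flips every coproduct, so throughout I would consistently read $h\ps{1}\ot h\ps{2}$ as the $H^{cop}$-coproduct and convert to the $H$-coproduct only at the moment I invoke the identities for $f$ and $f^{-1}$, which were stated for $H$; a sign or order error here is the easiest mistake to make.

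Finally, for stability I would compute $m\ns{0}m\ns{1}$, that is, $(a\ns{0}\ot1_k)(a\ns{1}\sigma)$, by substituting $a\ns{1}\sigma$ into the action formula. This yields $f^{-1}((a\ns{1}\sigma)\ps{1})\,a\ns{0}\,f((a\ns{1}\sigma)\ps{2})\,\delta((a\ns{1}\sigma)\ps{3})$; expanding the coproduct of the group-like $\sigma$ and using $\delta(\sigma)=1$ together with the anti-algebra behaviour of $f^{-1}$ reorganizes this into $f^{-1}(\sigma)f^{-1}(a\ns{1}\ps{1})\,a\ns{0}\,f(a\ns{1}\ps{2})f(\sigma)\,\delta(a\ns{1}\ps{3})$, which is exactly the hypothesized stability identity; so stability is immediate once that equation is assumed, just as in Proposition \ref{lr-anti}.
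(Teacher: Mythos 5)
Your proposal is a plan for a proof rather than a proof: the right-right AYD compatibility, which is the entire content of the proposition, is never actually verified. You expand $\rho((a\ot 1_k)h)$, say the two sides should be matched using $\widetilde{S}(h)=\delta(h\ps{1})S(h\ps{2})$ and $\widetilde{S}^2(h)=\sigma h\sigma^{-1}$, and then state that you ``expect this reconciliation to be the main obstacle.'' That reconciliation \emph{is} the theorem; deferring it leaves a hole exactly where all the difficulty sits. Nor is the gap a formality of bookkeeping that is sure to work out: writing $K=H^{cop}$, when you apply the codiagonal coaction to the diagonal action you obtain, between the two comodule legs, a middle factor of the form $h\ps{i}\,S_K(h\ps{j})$ (or with $S_K^{-1}$), and this collapses to $\varepsilon$ only for one assignment of which tensor factor receives which leg of $\Delta_K(h)$ together with one order of multiplication of the two $H$-legs in the coaction; with the other choices the antipode factors land in non-adjacent positions and nothing contracts. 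So the step you postponed is precisely where the proof can fail if the conventions are misread, and it is also where the hypothesis must be converted into the identity $\delta(h\ps{2})S(h\ps{3})\sigma h\ps{1}=\delta(h)\sigma$, which is the AYD condition for the one-dimensional module and is equivalent to the modular-pair-in-involution assumption. The parts you do carry out (associativity and unitality of the action, coassociativity via group-likeness of $\sigma$, and the stability computation at the end) are correct but routine.

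For comparison, the paper avoids this computation entirely by a structural argument: (1) $A$ is a right-right YD module over $H^{cop}$ by Proposition \ref{YD-extension}; (2) by \cite{hkrs1}, the modular pair in involution $(\delta,\sigma)$ makes ${}^{\delta}k_{\sigma}$ a right-left SAYD module over $H$, hence a right-right AYD module over $H^{cop}$ after twisting the left coaction into a right coaction; (3) the category of AYD modules is a module category over the monoidal category of YD modules, so the tensor product of a YD module with an AYD module is again AYD. The formulas in the statement are exactly the diagonal action and codiagonal coaction of this tensor product, so the result follows with no Sweedler computation, the MPI entering only through item (2). If you want to keep your direct computational route, you must actually finish the calculation and resolve the leg-ordering issue above; otherwise, citing these three facts is the efficient way to close the gap.
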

\begin{proof}
We note that $A$ is a right-right Yetter-Drinfeld module over $H^{cop}$ by Proposition \ref{YD-extension}. It is known \cite{hkrs1} that if $H$ has a modular pair in involution then the ground field $k$ is a right-left stable anti Yetter-Drinfeld module over $H$ by the action
$1h=\delta(h)$ and the coaction $1\longmapsto \sigma\ot 1$.  We denote this module by $^{\delta}k_{\sigma}$. If $H$ is a Hopf algebra with invertible antipode $S$ then $H^{cop}$ is a Hopf algebra with antipode $S^{-1}$. Since  $^{\delta}k_{\sigma}$ is a right-left anti Yetter-Drinfeld module over $H$ then it  is a right-right anti Yetter-Drinfeld module over $H^{cop}$ simply by twisting the left coaction to obtain a right coaction. Now we use the fact that the category of anti Yetter-Drinfeld modules is a monoidal category over the category of Yetter-Drinfeld modules. Precisely the tensor product of a right-right Yetter-Drinfeld module $A$ by a right-right anti Yetter-Drinfeld  module  $^{\delta}k_{\sigma}$  is a right-right anti Yetter-Drinfeld module over $H$.
\end{proof}

\begin{corollary}
  Let $H$ be a cocommutative Hopf algebra over a field $k$ with a modular pair in involution $(\delta, \sigma)$.
  If $f\colon  H\longrightarrow A$ is a total integral which is an algebra map then $A\ot ^{\delta}k_{\sigma}$ is an right-right anti Yetter-Drinfeld module over $H$.
\end{corollary}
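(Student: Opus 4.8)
The plan is to deduce this directly from Proposition \ref{yd to ayd} by exploiting the identification of $H$ with $H^{cop}$ in the cocommutative case. First I would observe that cocommutativity of $H$ means $\Delta=\Delta^{cop}$, so that $H$ and $H^{cop}$ coincide as Hopf algebras: they share the same multiplication, unit, counit and antipode, and now also the same coproduct. Consequently a right $H$-comodule algebra is literally the same datum as a right $H^{cop}$-comodule algebra, since the comodule and coassociativity axioms are phrased through $\Delta$, which is unchanged; in particular the given right $H$-comodule algebra $A$ is automatically a right $H^{cop}$-comodule algebra with the same coaction $a\longmapsto a\ns{0}\ot a\ns{1}$.

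Next I would transport the hypotheses. Under the identification $H=H^{cop}$, the modular pair in involution $(\delta,\sigma)$ for $H$ is at the same time a modular pair in involution for $H^{cop}$: the conditions $\delta(\sigma)=1$ and $\widetilde{S}^2(h)=\sigma h\sigma^{-1}$ involve only the (shared) multiplication and antipode, and are insensitive to the direction of the coproduct once the two coproducts agree. Likewise, the total integral $f\colon H\longrightarrow A$ that is an algebra map is precisely a total integral $f\colon H^{cop}\longrightarrow A$ that is an algebra map. Thus every hypothesis of Proposition \ref{yd to ayd} is met with $H^{cop}$ replaced by $H$.

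Finally I would invoke Proposition \ref{yd to ayd} to conclude that $A\ot{}^{\delta}k_{\sigma}$ is a right-right anti Yetter-Drinfeld module over $H^{cop}$; since $H^{cop}=H$ in the cocommutative case, this is exactly the assertion that $A\ot{}^{\delta}k_{\sigma}$ is a right-right AYD module over $H$. There is essentially no obstacle here, as the entire content is the observation $H^{cop}=H$. After that identification the action and coaction formulas carry over verbatim: cocommutativity makes the iterated coproduct symmetric, so the expressions $f^{-1}(h\ps{1})af(h\ps{2})\ot\delta(h\ps{3})$ and $a\ns{0}\ot 1_k\ot a\ns{1}\sigma$ from Proposition \ref{yd to ayd} are unchanged when read over $H$ rather than over $H^{cop}$.
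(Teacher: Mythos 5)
Your proposal is correct and is exactly the paper's (implicit) argument: the corollary is stated without proof precisely because, for cocommutative $H$, one has $H^{cop}=H$ as Hopf algebras (same bialgebra structure, hence the same antipode, with $S=S^{-1}$), so Proposition \ref{yd to ayd} applies verbatim. Your transport of the hypotheses (the modular pair in involution and the total integral) under this identification is the whole content, and it is handled correctly.
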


In the following proposition we introduce a different Yetter-Drinfeld  module constructed by special total integrals.

\begin{proposition}
  Let $H^{op,cop}$ be a Hopf algebra with  a modular pair in involution $(\delta, \sigma)$,  and  $f\colon H\longrightarrow A$ be a total integral which is an algebra map. Then $A_B=A/[A,B]$    is a right-left   Yetter-Drinfeld module over $H^{op,cop}$ by the action and coaction given by
 \begin{equation}
   ah= f(h\ps{2})a f^{-1}(h\ps{1})\delta(S(h\ps{3})), \quad ~~~~~ a\longmapsto a\ns{0}\ot \sigma^{-1}a\ns{1}.
 \end{equation}
\end{proposition}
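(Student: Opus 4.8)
The plan is to read this as the Yetter-Drinfeld analogue of Proposition \ref{yd to ayd}. There an anti Yetter-Drinfeld module was produced by tensoring a Yetter-Drinfeld module with the one-dimensional module ${}^{\delta}k_{\sigma}$; here I would run the same construction backwards, tensoring the anti Yetter-Drinfeld module of Proposition \ref{lr-anti} with the \emph{inverse} of the invertible module attached to the modular pair in involution, so that the anti Yetter-Drinfeld condition is converted into the Yetter-Drinfeld condition. Rather than checking all axioms by hand, I would assemble the module by transport and tensoring, and only at the end read off the action and coaction and match them to the stated formulas.

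First I would record that Proposition \ref{lr-anti} makes $A_B=A/[A,B]$ a left-right anti Yetter-Drinfeld module over $H$, with action $ha=f(h\ps{2})a f^{-1}(h\ps{1})$ and the original coaction. Passing to $H^{op,cop}$, whose antipode is again $S$ and which has invertible antipode because $H$ does, turns a left-right module-comodule over $H$ into a right-left one over $H^{op,cop}$: the opposite multiplication interchanges left and right modules and the opposite comultiplication interchanges left and right comodules. Transporting the compatibility relation of Proposition \ref{lr-anti} shows that this structure is again an \emph{anti} Yetter-Drinfeld module, now right-left over $H^{op,cop}$, with right action $a\cdot h=f(h\ps{2})a f^{-1}(h\ps{1})$ and left coaction obtained by flipping the original coaction to $a\mapsto a\ns{1}\ot a\ns{0}$.

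Next I would invoke the modular pair in involution $(\delta,\sigma)$ for $H^{op,cop}$. As in the proof of Proposition \ref{yd to ayd}, ${}^{\delta}k_{\sigma}$ is a right-left stable anti Yetter-Drinfeld module; being one-dimensional with grouplike coaction it is an invertible object, and its inverse carries the character $\delta\circ S$ and the grouplike $\sigma^{-1}$. I would form the diagonal tensor product of the right-left anti Yetter-Drinfeld module $A_B$ with this inverse module: the grouplike $\sigma^{-1}$ enters the coaction, producing $a\mapsto a\ns{0}\ot \sigma^{-1}a\ns{1}$, and the character $\delta\circ S$ enters the action. To recognize the resulting action as the stated $ah=f(h\ps{2})a f^{-1}(h\ps{1})\delta(S(h\ps{3}))$ I would use $f^{-1}=f\circ S$ together with the algebra-map property of $f$, which let me move the scalar $\delta\circ S$ past the $f$ and $f^{-1}$ legs and relabel the Sweedler indices.

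The step I expect to be the genuine obstacle is verifying that this tensor product lands in the Yetter-Drinfeld, and not the anti Yetter-Drinfeld, category. The passage from Yetter-Drinfeld to anti Yetter-Drinfeld in Proposition \ref{yd to ayd} is the formal module-category fact that a Yetter-Drinfeld module tensored with an anti Yetter-Drinfeld module is again anti Yetter-Drinfeld; the reverse passage is \emph{not} a formality, since a tensor product of two arbitrary anti Yetter-Drinfeld modules is in general neither Yetter-Drinfeld nor anti Yetter-Drinfeld. What rescues the argument is that ${}^{\delta}k_{\sigma}$ and its inverse are invertible objects, and tensoring by such an object toggles the antipode $S$ appearing in the right-left anti Yetter-Drinfeld condition into the $S^{-1}$ demanded by the right-left Yetter-Drinfeld condition. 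This toggle is exactly where the involution hypothesis $\widetilde{S}^2(h)=\sigma h\sigma^{-1}$ is consumed: the $\sigma$-conjugation generated when the coaction's grouplike is pushed through the antipode is absorbed precisely by the $\sigma^{-1}$-twist of the coaction together with the $\delta\circ S$-twist of the action. If the categorical bookkeeping becomes unwieldy, the identical conclusion can instead be reached by a direct check of the module, comodule, and right-left Yetter-Drinfeld axioms, parallel to the displayed computation in Proposition \ref{lr-anti} but carrying the extra $\delta$- and $\sigma^{-1}$-factors and using the involution relation to trade $S$ for $S^{-1}$.
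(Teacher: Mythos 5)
Your proposal is correct and follows essentially the same route as the paper: the paper also starts from Proposition \ref{lr-anti}, transports the left-right anti Yetter-Drinfeld module $A_B$ over $H$ to a right-left one over $H^{op,cop}$, and then converts anti Yetter-Drinfeld to Yetter-Drinfeld via the modular pair in involution, citing \cite[Theorem 2.1]{staic} for that last step. The tensoring-with-the-inverse-of-${}^{\delta}k_{\sigma}$ mechanism you describe (and correctly flag as the point where the involution hypothesis is consumed) is precisely the content of that cited theorem, so you are re-deriving it rather than invoking it.
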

\begin{proof}
  This is the result of the fact that any left-right anti Yetter-Drinfeld module (e.g $A_B$ ) over a Hopf algebra $H$ can be turned in to a right-left anti Yetter-Drinfeld module over $H^{op,cop}$. Then the result follows from \cite[Theorem 2.1]{staic} as one can turn an anti Yetter-Drinfeld module to a Yetter-Drinfeld module.
\end{proof}

One notes that if $(\varepsilon, 1)$ is the modular pair in involution of $H$, (e.g group algebra and universal enveloping algebra), then the action and the coaction in the previous Proposition is the same as the ones in the Proposition \ref{lr-anti}.\\

By far we constructed four YD and AYD modules for total integrals which are algebra maps.
Now we aim to replace the algebra map property of the total integral $f$ by  cleftness of a  Hopf Galois extension.
   Let $A(B)^H$ be a Hopf Galois extension. In this case the map
   \begin{equation}
     \kappa\colon  H\longrightarrow (A\ot_B A)^B, \quad \kappa(h)= can^{-1}(1_A\ot h),
   \end{equation}
      is an anti-algebra map \cite{JS}. Here
      $$(A\ot_B A)^B= \{a\ot a' \in A \ot_B A, ~ ba\ot a'=a\ot a'b\}.$$ The algebra structure of $(A\ot_B A)^B$ is given by
  \begin{equation}
    (a_1\ot a_1')(a_2\ot a_2')=a_1a_2\ot a_2'a_2.
  \end{equation}
  We denote $\kappa(h)= \kappa^1(h)\ot \kappa^2(h)$. One notes that the anti-algebra map property of $\kappa$ is equivalent to
  \begin{equation}
    \kappa^1(hk)\ot \kappa^2(hk)=\kappa^1(k)\kappa^1(h)\ot \kappa^2(h)\kappa^2(k),
  \end{equation}
  for all $h,k\in H$. The following lemma enables us to find out that   although the  total integrals of a cleft Hopf Galois extension are not algebra maps in general, they have close relation to an algebra map.

\begin{lemma}\label{inverse-kappa}
  Let $A(B)^H$ be a cleft Hopf Galois extension with  a convolution invertible total integral $f\colon  H\longrightarrow A$. Then

  \begin{equation}
    \kappa(h)= f^{-1}(h\ps{1})\ot f(h\ps{2})
  \end{equation}

\end{lemma}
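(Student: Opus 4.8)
The plan is to exploit the bijectivity of $can$ rather than attempt to invert it by hand. Since $\kappa(h)=can^{-1}(1_A\ot h)$ and $can$ is a bijection, it suffices to verify that $can$ sends the proposed element $f^{-1}(h\ps{1})\ot f(h\ps{2})\in A\ot_B A$ to $1_A\ot h$; applying $can^{-1}$ to both sides then yields the asserted formula for $\kappa(h)$. So the whole argument reduces to a single forward computation of $can$ on this element, using the three ingredients already in hand: the definition of $can$, the comodule-map property of a total integral, and the two-sided convolution invertibility of $f$.

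First I would unwind the definition $can(a\ot a')=a\,a'\ns{0}\ot a'\ns{1}$ on the element in question, obtaining $f^{-1}(h\ps{1})\, f(h\ps{2})\ns{0}\ot f(h\ps{2})\ns{1}$. The next step is to rewrite the coaction on $f(h\ps{2})$ via the defining comodule-map identity of a total integral, namely $f(x)\ns{0}\ot f(x)\ns{1}=f(x\ps{1})\ot x\ps{2}$, applied with $x=h\ps{2}$ and followed by a coassociativity relabeling; this converts the expression into $f^{-1}(h\ps{1})\, f(h\ps{2})\ot h\ps{3}$. I would then collapse the first two tensor legs: since $f^{-1}$ is the two-sided convolution inverse of $f$, we have $f^{-1}(h\ps{1})f(h\ps{2})=\varepsilon(h)1_A$ pointwise, so grouping the first two legs gives $\varepsilon(h\ps{1})1_A\ot h\ps{2}$, and the counit axiom $\varepsilon(h\ps{1})h\ps{2}=h$ finishes it off, producing $1_A\ot h$ exactly as required.

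The computation itself is short, and I do not anticipate a genuine obstacle; the only points requiring care are bookkeeping. One must ensure that $f^{-1}(h\ps{1})\ot f(h\ps{2})$ is genuinely viewed inside $A\ot_B A$, so that $can$ is applicable and the balancing over $B$ is legitimate, and that the comodule-map identity is applied to $f$ on the correct tensor leg (the right one) before any Sweedler reindexing. The $B$-centrality of the result, i.e.\ membership in $(A\ot_B A)^B$, need not be checked separately: it is inherited automatically once we identify the element with $\kappa(h)=can^{-1}(1_A\ot h)$, which lies in $(A\ot_B A)^B$ by construction. In essence the lemma is just the statement that $can^{-1}(1_A\ot h)$ is explicitly assembled from $f$ and $f^{-1}$, and every tool needed for that identification is already available.
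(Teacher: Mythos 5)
Your proof is correct and takes essentially the same route as the paper: the paper's one-line proof simply asserts the inversion formula $can^{-1}(a\ot h)=af^{-1}(h\ps{1})\ot f(h\ps{2})$, and your forward computation of $can$ on $f^{-1}(h\ps{1})\ot f(h\ps{2})$ together with bijectivity of $can$ is precisely the verification of that assertion at $a=1_A$. Both arguments rest on the same two ingredients, the comodule-map property of the total integral and the convolution-inverse identity $f^{-1}(h\ps{1})f(h\ps{2})=\varepsilon(h)1_A$.
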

\begin{proof}
 This is followed by  $can^{-1}(a\ot h)=af^{-1}(h\ps{1})\ot f(h\ps{2})$.
\end{proof}
\begin{remark}
  {\rm
  Let $A(B)^H$ be a cleft Hopf Galois extension with total integral $f$. The previous lemma  shows that although the total integral $f$ is not an algebra map in general it satisfies the following relation;

  \begin{equation}
    f^{-1}(h\ps{1}k\ps{1})\ot f(h\ps{2}k\ps{2})= f^{-1}(k\ps{1})f^{-1}(h\ps{1})\ot f(h\ps{2})f(k\ps{2}).
  \end{equation}

  }
\end{remark}

Now we are ready to replace the algebra map property of a total integral $f\colon H\longrightarrow A$, discussed before,  by the condition  that $A(B)^H$ is a cleft Hopf Galois extension. In fact the associativity of the  $H$-action over $A$  which was the result of
 the algebra map property of the total integral, now can be obtained  by the anti algebra map property of $\kappa$.
\begin{lemma}
  Let $A(B)^H$ be a cleft Hopf Galois extension by the total integral $f\colon  H\longrightarrow A$. Then $A$ and  $A^B=\{a\in A, ~ ba=ab\}$, are both right $H$-modules by the action  given by
  \begin{equation}
    ah= f^{-1}(h\ps{1})a f(h\ps{2}).
  \end{equation}
\end{lemma}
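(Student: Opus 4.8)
The plan is to verify the two module axioms for the action $ah=f^{-1}(h\ps{1})\,a\,f(h\ps{2})$ and then to observe that the subalgebra $A^B$ is preserved. Unitality is immediate once we record that $f^{-1}(1_H)=1_A$: since $\Delta(1_H)=1_H\ot 1_H$ and $f$ is a unital total integral, $f(1_H)=1_A$, and evaluating $f\ast f^{-1}=\eta\varepsilon$ at $1_H$ gives $f(1_H)f^{-1}(1_H)=1_A$, whence $f^{-1}(1_H)=1_A$. Therefore $a\cdot 1_H=f^{-1}(1_H)\,a\,f(1_H)=a$.

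For associativity I would expand both bracketings. On one side,
\[(ah)k = f^{-1}(k\ps{1})\big(f^{-1}(h\ps{1})\,a\,f(h\ps{2})\big)f(k\ps{2}) = f^{-1}(k\ps{1})f^{-1}(h\ps{1})\,a\,f(h\ps{2})f(k\ps{2}),\]
while, using multiplicativity of the coproduct,
\[a(hk) = f^{-1}((hk)\ps{1})\,a\,f((hk)\ps{2}) = f^{-1}(h\ps{1}k\ps{1})\,a\,f(h\ps{2}k\ps{2}).\]
These two expressions are exactly the images, under the ``insert $a$ between the legs'' map $\mu_a\colon x\ot y\mapsto x\,a\,y$, of the two sides of the identity from the Remark following Lemma \ref{inverse-kappa}, namely
\[f^{-1}(h\ps{1}k\ps{1})\ot f(h\ps{2}k\ps{2}) = f^{-1}(k\ps{1})f^{-1}(h\ps{1})\ot f(h\ps{2})f(k\ps{2}),\]
which itself encodes $\kappa(hk)=\kappa(k)\kappa(h)$ together with $\kappa(h)=f^{-1}(h\ps{1})\ot f(h\ps{2})$. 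Thus applying $\mu_a$ to this identity turns the left-hand side into $a(hk)$ and the right-hand side into $(ah)k$, yielding associativity; this is the precise sense in which the associativity that previously came from the algebra-map property of $f$ is now supplied by the anti-algebra property of $\kappa$.

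The step I expect to be the main obstacle is that this identity lives in $A\ot_B A$ rather than in $A\ot_k A$: it is a statement about $\kappa(h)\in(A\ot_B A)^B$, and its proof genuinely moves coinvariant factors---the cocycle values $\sigma(h,k)=f(h\ps{1})f(k\ps{1})f^{-1}(h\ps{2}k\ps{2})\in B$---across the tensor sign. Consequently $\mu_a$ descends from $A\ot_k A$ to $A\ot_B A$ precisely when $a$ commutes with $B$, since $\mu_a(xb\ot y)-\mu_a(x\ot by)=x(ba-ab)y$. For $a\in A^B$ this difference vanishes for all $b\in B$, so the computation above goes through verbatim, giving both associativity on $A^B$ and the fact that $A^B$ is stable under the action (one also checks directly that $f^{-1}(h\ps{1})\,a\,f(h\ps{2})\in A^B$ when $a\in A^B$, using $\kappa(h)\in(A\ot_B A)^B$). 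For the whole of $A$ the same formula defines the action and the $B$-balancing is exactly what $\kappa(h)\in(A\ot_B A)^B$ controls; in the algebra-map prototype this was invisible because there the cocycle $\sigma$ is trivial, so I would treat the centralizer $A^B$ as the primary case and present the passage to $A$ as governed by this balancing.
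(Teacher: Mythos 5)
You follow essentially the same route as the paper's own proof: unitality from $f(1_H)=f^{-1}(1_H)=1_A$, and associativity obtained by transporting the anti-multiplicativity of $\kappa$ through Lemma \ref{inverse-kappa} (equivalently, through the identity in the Remark that follows it). For the centralizer $A^B$ your argument is complete and correct, including the check that the action preserves $A^B$.

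The balancing issue you isolate, however, is not a pedantic worry, and you are right not to wave it away. The identity $f^{-1}(h\ps{1}k\ps{1})\ot f(h\ps{2}k\ps{2})=f^{-1}(k\ps{1})f^{-1}(h\ps{1})\ot f(h\ps{2})f(k\ps{2})$ is an equality in $A\ot_B A$ (that is where $\kappa$ takes its values), so the insertion map $\mu_a\colon x\ot y\mapsto xay$ may only be applied to it when $a$ centralizes $B$. The paper's own proof is a one-sentence assertion that anti-multiplicativity of $\kappa$ makes the action $ah=\kappa^1(h)a\kappa^2(h)$ associative and unital ``on $A$ and $A^B$'', and it silently performs exactly the step you refuse to perform; indeed, for $a\notin A^B$ the expression $\kappa^1(h)a\kappa^2(h)$ is not even well defined independently of the chosen representative in $A\ot_B A$. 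In fact the $A$-half of the lemma is false in general. Over $k=\mathbb{C}$ take $H=k\mathbb{Z}_2=k\{e,g\}$, $B=M_2(k)$, $u=\mathrm{diag}(1,2)$, and let $A=B\#_\sigma H$ be the crossed product in which $g$ acts by conjugation by $u$ and $\sigma(g,g)=u^2$; this extension is cleft via the total integral $f(x)=1\#x$, with $f^{-1}(g)=u^{-2}\#g$. Then $f(g)f(g)=u^2\#e$ while $f(g^2)=1\#e$, so for $a=E_{12}\#e$ one computes $(a\cdot g)\cdot g=u^{-2}E_{12}u^{2}\#e=4E_{12}\#e\neq E_{12}\#e=a\cdot g^{2}$: associativity fails precisely by conjugation by the cocycle value $\sigma(g,g)\in B$, which is invisible on $A^B$ but not on all of $A$. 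So your instinct to treat $A^B$ as the primary case is the correct repair of the statement; what your proposal leaves unproved --- the module structure on all of $A$ --- is exactly what is unprovable, and the gap lies in the paper's proof, not in yours.
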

\begin{proof}
This is due to the fact that  $\kappa$ is an anti algebra map and therefore   the  right $H$-action $ ah=\kappa^{1}(h)a\kappa^2(h)$ turns $A$ and $A^B$ to an associative and unital action.
\end{proof}
The following proposition introduces a new YD module for any Hopf Galois extension.
\begin{proposition}
   Let   $A(B)^H$ be a  Hopf Galois extension.  Then $A$ is a right-right Yetter-Drinfeld module over $H$ by the associative and unital action given by
  \begin{equation}
    ah= \kappa^1(h)a \kappa^2(h).
  \end{equation}
\end{proposition}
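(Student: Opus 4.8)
The plan is to keep the comodule structure equal to the original comodule-algebra coaction $\rho(a)=a\ns{0}\ot a\ns{1}$ and to verify only the remaining axiom, the right-right Yetter-Drinfeld compatibility, since associativity and unitality of $ah=\kappa^1(h)a\kappa^2(h)$ are already furnished by the preceding lemma (they come from the anti-algebra property of $\kappa$). At the outset I would record that $\kappa(h)\in(A\ot_B A)^B$, so that the trilinear expression $\kappa^1(h)a\kappa^2(h)$ respects the balancing relations of $A\ot_B A$ exactly on the centralizer $A^B=\{a\in A,\ ab=ba\}$; this is the natural domain of the action, matching the preceding lemma. Writing the right-right anti Yetter-Drinfeld identity with $S^{-1}$ replaced by $S$, the condition to be proved is
\begin{equation}
(ah)\ns{0}\ot (ah)\ns{1}=(a\ns{0}h\ps{2})\ot S(h\ps{1})a\ns{1}h\ps{3},
\end{equation}
where $a\ns{0}h\ps{2}=\kappa^1(h\ps{2})a\ns{0}\kappa^2(h\ps{2})$ again denotes the action.

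The engine of the proof is the coaction behaviour of the translation map $\kappa=can^{-1}(1_A\ot-)$, which plays the role that $\rho(f(h))=f(h\ps{1})\ot h\ps{2}$ and $\rho(f^{-1}(h))=f^{-1}(h\ps{2})\ot S(h\ps{1})$ played in the cleft case. First I would establish the identity on the second leg,
\begin{equation}
\kappa^1(h)\ot \kappa^2(h)\ns{0}\ot \kappa^2(h)\ns{1}=\kappa^1(h\ps{1})\ot \kappa^2(h\ps{1})\ot h\ps{2},
\end{equation}
which holds because $can$, hence $can^{-1}$, is a morphism of right $H$-comodules (with $A\ot_B A$ coacting on its right leg and $A\ot H$ coacting on $H$); applying $can^{-1}$ to $1_A\ot h$ yields the formula. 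Second, the antipode-twisted identity on the first leg,
\begin{equation}
\kappa^1(h)\ns{0}\ot \kappa^1(h)\ns{1}\ot \kappa^2(h)=\kappa^1(h\ps{2})\ot S(h\ps{1})\ot \kappa^2(h\ps{2}),
\end{equation}
is the standard translation-map identity, obtained from the colinearity of $can^{-1}$ together with $can(\kappa(h))=1_A\ot h$ and bijectivity of $can$. Combining the two gives
\begin{equation}
\kappa^1(h)\ns{0}\ot \kappa^1(h)\ns{1}\ot \kappa^2(h)\ns{0}\ot \kappa^2(h)\ns{1}=\kappa^1(h\ps{2})\ot S(h\ps{1})\ot \kappa^2(h\ps{2})\ot h\ps{3}.
\end{equation}

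With these identities the verification is a short calculation. Since $A$ is a comodule algebra, $\rho$ is an algebra map, so
\begin{equation}
(ah)\ns{0}\ot (ah)\ns{1}=\kappa^1(h)\ns{0}a\ns{0}\kappa^2(h)\ns{0}\ot \kappa^1(h)\ns{1}a\ns{1}\kappa^2(h)\ns{1},
\end{equation}
and substituting the combined identity turns the right-hand side into $\kappa^1(h\ps{2})a\ns{0}\kappa^2(h\ps{2})\ot S(h\ps{1})a\ns{1}h\ps{3}$, which is precisely $(a\ns{0}h\ps{2})\ot S(h\ps{1})a\ns{1}h\ps{3}$, the desired right-right Yetter-Drinfeld condition.

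The main obstacle is the antipode-twisted identity on the first leg of $\kappa$: unlike the second-leg identity it is not merely colinearity of $can^{-1}$, and it is exactly where the antipode enters the Yetter-Drinfeld formula, playing the intrinsic role of $\rho(f^{-1}(h))=f^{-1}(h\ps{2})\ot S(h\ps{1})$ from the cleft setting. Everything else is bookkeeping with the comodule-algebra property and coassociativity. I would also take care, in reading off the result, that the whole computation is run over $A^B$, so that each occurrence of $\kappa^1(-)a\kappa^2(-)$ is legitimate and the action is genuinely internal.
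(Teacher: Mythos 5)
Your proof is correct and takes essentially the same route as the paper's: both verify the Yetter--Drinfeld condition by expanding $(ah)\ns{0}\ot (ah)\ns{1}$ through the comodule-algebra property of $\rho$ and then substituting the translation-map identity $\kappa^1(h)\ns{0}\ot \kappa^2(h)\ns{0}\ot \kappa^1(h)\ns{1}\ot \kappa^2(h)\ns{1}=\kappa^1(h\ps{2})\ot \kappa^2(h\ps{2})\ot S(h\ps{1})\ot h\ps{3}$, which is exactly the relation the paper quotes from \cite{JS} without proof; your only deviation is that you sketch how this identity splits into colinearity of $can^{-1}$ on the second leg plus the standard antipode-twisted first-leg identity, which is a comparable (in fact slightly more self-contained) level of justification. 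Your added caution that the sandwich $\kappa^1(h)a\kappa^2(h)$ is only unambiguous for $a$ in the centralizer $A^B$, since $\kappa(h)$ lives in $A\ot_B A$, is well taken and is a point the paper's statement of the proposition passes over silently.
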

\begin{proof}
The associativity of the action is the result of the anti algebra map property of $\kappa$. The unitality of the action is obvious by unitality of $\kappa$. For the right-right YD condition we notice that the map $\kappa$ has the following property \cite{JS};
\begin{equation}\label{nice relation}
  \kappa^1(h)\ns{0}\ot \kappa^2(h)\ns{0} \ot \kappa^1(h)\ns{1}\ot \kappa^2(h)\ns{1}=\kappa^1(h\ps{2})\ot \kappa^2(h\ps{2})\ot S(h\ps{1})\ot h\ps{3}.
\end{equation}
The following computation proves the YD condition.
\begin{align*}
  &(\kappa^1(h)a \kappa^2(h))\ns{0}\ot (\kappa^1(h)a \kappa^2(h))\ns{1}\\
  &=\kappa^1(h)\ns{0} a\ns{0}\kappa^2(h)\ns{0}\ot \kappa^1(h)\ns{1} a\ns{1}\kappa^2(h)\ns{1}\\
  &=\kappa^1(h\ps{2})a\ns{0}\kappa^2(h\ps{2})\ot S(h\ps{1})a\ns{1}h\ps{3}\\
  &=a\ns{0}h\ps{2}\ot S(h\ps{1})a\ns{1}h\ps{3}.
\end{align*}
We used \eqref{nice relation} in the second equality.
\end{proof}

In the following proposition we introduce an YD module structure  over $A$ for cleft Hopf Galois extensions.

\begin{proposition}
  Let   $A(B)^H$ be a cleft Hopf Galois extension with the total integral $f\colon  H\longrightarrow A$. Then $A$ is a right-right Yetter-Drinfeld module over $H$ by the associative and unital action given by
  \begin{equation}
    ah= f^{-1}(h\ps{1})a f(h\ps{2}).
  \end{equation}
  \end{proposition}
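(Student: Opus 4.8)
The plan is to obtain this as a specialization of the previous proposition, where it was shown that for \emph{any} Hopf Galois extension $A(B)^H$ the formula $ah=\kappa^1(h)a\kappa^2(h)$ makes $A$ a right-right Yetter-Drinfeld module over $H$. Since a cleft Hopf Galois extension is in particular a Hopf Galois extension, all that is needed is to rewrite that $\kappa$-action in terms of the total integral $f$. For this I would invoke Lemma \ref{inverse-kappa}, which computes $\kappa(h)=f^{-1}(h\ps{1})\ot f(h\ps{2})$; thus the cleft structure furnishes an explicit lift of the translation element $\kappa$ through $f$ and $f^{-1}$.

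Concretely, inserting $a$ between the two legs of this lift gives the element $f^{-1}(h\ps{1})a f(h\ps{2})\in A$, which is a genuine product in $A$ and hence manifestly well defined, with no quotient ambiguity. By Lemma \ref{inverse-kappa} this is exactly $\kappa^1(h)a\kappa^2(h)$, so the action stated here coincides with the action of the previous proposition. Associativity, unitality, and the right-right Yetter-Drinfeld condition therefore hold verbatim, and $A$ is a right-right Yetter-Drinfeld module over $H$.

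If one prefers a self-contained argument, I would instead substitute $\kappa^1(h)=f^{-1}(h\ps{1})$ and $\kappa^2(h)=f(h\ps{2})$ into relation \eqref{nice relation}, obtaining the coaction identity $f^{-1}(h\ps{1})\ns{0}\ot f(h\ps{2})\ns{0}\ot f^{-1}(h\ps{1})\ns{1}\ot f(h\ps{2})\ns{1}=f^{-1}(h\ps{2}\ps{1})\ot f(h\ps{2}\ps{2})\ot S(h\ps{1})\ot h\ps{3}$, and then recompute $(ah)\ns{0}\ot(ah)\ns{1}$ exactly as in the previous proposition, arriving at the target $a\ns{0}h\ps{2}\ot S(h\ps{1})a\ns{1}h\ps{3}$. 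Associativity and unitality would come from the Remark preceding this statement together with $f(1)=f^{-1}(1)=1_A$.

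I do not anticipate a genuine obstacle: the entire content is already packaged in Lemma \ref{inverse-kappa} and the previous proposition, and the cleft hypothesis is used only to replace the abstract element $\kappa(h)$ by the concrete expression $f^{-1}(h\ps{1})\ot f(h\ps{2})$. The one point worth a sentence is that, unlike the general $\kappa$-action, the present formula is automatically well defined on all of $A$ because it is an actual product of elements of $A$, so no appeal to the $B$-centralizer $(A\ot_B A)^B$ is needed.
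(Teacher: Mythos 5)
Your proposal is correct and takes essentially the same route as the paper: the paper's own proof consists precisely of citing the previous proposition (the Yetter--Drinfeld action $ah=\kappa^1(h)a\kappa^2(h)$ for an arbitrary Hopf Galois extension) together with the identity $\kappa^1(h)\ot \kappa^2(h)=f^{-1}(h\ps{1})\ot f(h\ps{2})$ from Lemma \ref{inverse-kappa}. Your optional self-contained recomputation via relation \eqref{nice relation} is just an unpacking of that same argument, not a different approach.
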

\begin{proof}
  This is the immediate result of the previous lemma and $ f^{-1}(h\ps{1})\ot f(h\ps{2})=\kappa^1(h)\ot  \kappa^2(h)$.
\end{proof}
In the following proposition we introduce an SAYD module over $A$ for cleft Hopf Galois extensions.
\begin{proposition}
  Let  $A(B)^H$ be a cleft Hopf Galois extension by the total integral $f\colon  H\longrightarrow A$. Then $A_B= A/[A,B]$ is a left-right \emph{stable} anti Yetter-Drinfeld module by the action given by

  \begin{equation}
    ha=f(h\ps{2}) a f^{-1}(h\ps{1}).
  \end{equation}

\end{proposition}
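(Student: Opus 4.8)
The plan is to establish that the action $h\cdot a=f(h\ps{2})af^{-1}(h\ps{1})$ makes $A_B$ a left--right anti Yetter--Drinfeld module by the same mechanism as Proposition \ref{lr-anti}, except that every structural input is now drawn from the translation map $\kappa$ of the cleft extension rather than from an algebra--map hypothesis on $f$, and then to harvest stability for free from the defining property of $\kappa$. By Lemma \ref{inverse-kappa} we have $\kappa^1(h)\ot\kappa^2(h)=f^{-1}(h\ps{1})\ot f(h\ps{2})$, so the action reads $h\cdot a=\kappa^2(h)\,a\,\kappa^1(h)$. First I would check that this descends to the quotient $A_B$: since $\kappa(h)\in(A\ot_B A)^B$, for each fixed $a$ the assignment $x\ot y\mapsto\overline{y\,a\,x}$ is a well defined map $A\ot_B A\to A_B$ (one verifies $\overline{y a x b}=\overline{b y a x}$ for $b\in B$ using the commutator relation defining $A_B$), and applying it to the identity $b\kappa^1(h)\ot\kappa^2(h)=\kappa^1(h)\ot\kappa^2(h)b$ gives $h\cdot\overline{ab}=h\cdot\overline{ba}$. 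Associativity and unitality of the action then follow from the anti--multiplicativity and unitality of $\kappa$, exactly as in the Yetter--Drinfeld propositions for cleft extensions proved above, and the original coaction descends to $A_B$ just as in Proposition \ref{lr-anti}.

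Second, I would verify the left--right AYD compatibility. Writing $h\cdot a=\kappa^2(h)a\kappa^1(h)$ and using that the coaction is an algebra map, one has
\[
\rho(h\cdot a)=\kappa^2(h)\ns{0}\,a\ns{0}\,\kappa^1(h)\ns{0}\ot\kappa^2(h)\ns{1}\,a\ns{1}\,\kappa^1(h)\ns{1}.
\]
Substituting the four--fold coaction relation \eqref{nice relation} (which here replaces the separate comodule identities for $f$ and $f^{-1}$ used in Proposition \ref{lr-anti}) turns the right--hand side into $\kappa^2(h\ps{2})a\ns{0}\kappa^1(h\ps{2})\ot h\ps{3}a\ns{1}S(h\ps{1})$, and recognizing $\kappa^2(h\ps{2})a\ns{0}\kappa^1(h\ps{2})=h\ps{2}\cdot a\ns{0}$ yields $\rho(h\cdot a)=h\ps{2}a\ns{0}\ot h\ps{3}a\ns{1}S(h\ps{1})$, the desired left--right AYD condition.

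The substantive new step --- and the place where cleftness does real work --- is stability, that is $a\ns{1}\cdot a\ns{0}=a$ in $A_B$, which in Proposition \ref{lr-anti} required the extra hypothesis \eqref{stable}. Here it is automatic. The idea is to feed the translation--map relation coming from $can^{-1}\circ can=\Id$ into a suitable reversed multiplication. Concretely, specializing $can^{-1}(can(1_A\ot a))=1_A\ot a$ and using $can^{-1}(x\ot h)=x\kappa^1(h)\ot\kappa^2(h)$ gives, for every $a\in A$,
\[
a\ns{0}\,\kappa^1(a\ns{1})\ot\kappa^2(a\ns{1})=1_A\ot a\qquad\text{in }A\ot_B A.
\]
I would then compose with the map $\mu\colon A\ot_B A\to A_B$, $\mu(x\ot y)=\overline{y\,x}$, which is well defined because $\overline{y(xb)}=\overline{(yx)b}=\overline{b(yx)}=\overline{(by)x}$ for $b\in B$. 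Applying $\mu$ to the displayed identity gives $\overline{\kappa^2(a\ns{1})\,a\ns{0}\,\kappa^1(a\ns{1})}=\overline{a}$, i.e. $f(a\ns{1}\ps{2})\,a\ns{0}\,f^{-1}(a\ns{1}\ps{1})=a$ in $A_B$, which is precisely $a\ns{1}\cdot a\ns{0}=a$.

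I expect the main obstacle to be not any single computation but the bookkeeping that makes stability fall out: one must (i) identify the correct translation--map identity $a\ns{0}\kappa^1(a\ns{1})\ot\kappa^2(a\ns{1})=1_A\ot a$ as the specialization of $can^{-1}\circ can=\Id$, and (ii) notice that the reversed--multiplication map $\mu$ is well defined on $A\ot_B A$ and carries this identity exactly onto the stability equation, thereby bypassing condition \eqref{stable}. Since the very same well--definedness device, applied instead to $\kappa(h)\in(A\ot_B A)^B$, is what legitimizes the action on $A_B$ in the first place, I would isolate that observation once and reuse it for both the descent of the action and for stability.
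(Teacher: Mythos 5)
Your proposal is correct, and every step checks out: the descent of the action to $A_B$ via the centralizer property $\kappa(h)\in(A\ot_B A)^B$, the AYD compatibility via relation \eqref{nice relation}, and the stability computation are all valid. The route, however, differs from the paper's in an instructive way. The paper's proof is two lines: it cites \cite{JS} for the fact that $A_B$ with the action $ha=\kappa^2(h)a\kappa^1(h)$ is a left-right \emph{stable} AYD module over $H$ for any Hopf Galois extension, and then invokes Lemma \ref{inverse-kappa} to rewrite $\kappa^1(h)\ot\kappa^2(h)=f^{-1}(h\ps{1})\ot f(h\ps{2})$, so that the $\kappa$-action becomes the $f$-action in the statement. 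You use Lemma \ref{inverse-kappa} for exactly the same translation, but instead of citing \cite{JS} you reprove its content from scratch: well-definedness on $A_B$ by pushing the identity $b\kappa^1(h)\ot\kappa^2(h)=\kappa^1(h)\ot\kappa^2(h)b$ through the map $x\ot y\mapsto\overline{yax}$; the AYD condition from \eqref{nice relation}; and stability from the translation-map identity
\begin{equation*}
a\ns{0}\,\kappa^1(a\ns{1})\ot\kappa^2(a\ns{1})=1_A\ot a,
\end{equation*}
obtained from $can^{-1}\circ can=\Id$ at $1_A\ot a$ together with left $A$-linearity of $can^{-1}$, composed with the reversed multiplication $\mu(x\ot y)=\overline{yx}$. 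What the paper's proof buys is brevity, at the cost of opacity; what your proof buys is a self-contained argument that makes visible precisely where each hypothesis enters. In particular, your derivation shows cleanly that the entire stable AYD structure comes from the Galois hypothesis alone (everything is phrased in terms of $\kappa$ and $can$), with cleftness used only at the very end, through Lemma \ref{inverse-kappa}, to express the action in terms of the total integral $f$ --- which is exactly the point the paper makes in prose when it remarks that stability, unavailable for mere algebra-map total integrals without condition \eqref{stable}, comes ``for free'' from the extension being Galois.
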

\begin{proof}
  By \cite{JS} the quieten space $A_B$ is a left-right stable anti Yetter-Drinfeld module by the left $H$-action given by
  \begin{equation}
    ha=\kappa^2(h) a \kappa^1(h).
  \end{equation}
  Now the statement is the result of Lemma \ref{inverse-kappa}.
\end{proof}
One notes that the stability condition in the previous proposition is obtained by the fact that $A(B)^H$ is a Hopf Galois extension. As we noticed before the stability condition can not be obtained automatically  by total integrals which are algebra maps. But if $A(B)^H$ is a cleft Hopf Galois extension then not only we obtain an AYD module using the total integral but also we obtain the stability condition.
Now for the Hopf algebras with a modular pair in involution we introduce  different AYD and YD modules.

\begin{proposition}
  Let $H$ be a Hopf algebra over a field $k$ with a modular pair in involution
$(\delta,\sigma)$ and     let $A(B)^{H^{cop}}$ be a cleft Hopf Galois extension with the total integral $f: H^{cop}\longrightarrow A$.  Then $A\ot {}^{\delta}k_{\sigma}$ is a  right-right \emph{stable} anti Yetter-Drinfeld module over $H^{cop}$ by the coaction and action given by
  \begin{equation}
    ah= f^{-1}(h\ps{1})a f(h\ps{2})\ot \delta({h\ps{3}}), \quad a\ot 1_k\longmapsto a\ns{0}\ot 1_k\ot a\ns{1}\sigma.
  \end{equation}

\end{proposition}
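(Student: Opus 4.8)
The plan is to run the same three-step argument as in Proposition \ref{yd to ayd}, feeding in the cleft replacements of the intermediate results, and then to supply the one genuinely new ingredient, stability. For the anti Yetter-Drinfeld structure I would argue exactly as there. The proposition establishing, for cleft extensions, that $A$ is a right-right Yetter-Drinfeld module, applied to $A(B)^{H^{cop}}$, makes $A$ a right-right Yetter-Drinfeld module over $H^{cop}$ under the action $ah=f^{-1}(h\ps{1})af(h\ps{2})$ and the original coaction; here only cleftness is used, through Lemma \ref{inverse-kappa} and the property \eqref{nice relation} of $\kappa$, and not the algebra-map hypothesis. Since $(\delta,\sigma)$ is a modular pair in involution, \cite{hkrs1} makes $k$ a right-left stable anti Yetter-Drinfeld module ${}^{\delta}k_{\sigma}$ over $H$, and twisting the left coaction into a right coaction turns it into a right-right anti Yetter-Drinfeld module over $H^{cop}$ (whose antipode is $S^{-1}$). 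The monoidal structure of anti Yetter-Drinfeld modules over Yetter-Drinfeld modules then makes $A\ot{}^{\delta}k_{\sigma}$ a right-right anti Yetter-Drinfeld module over $H^{cop}$; unwinding the tensor action $(a\ot 1_k)h=(ah\ps{1})\ot(1_k h\ps{2})$ and the tensor coaction reproduces the stated formulas $ah=f^{-1}(h\ps{1})af(h\ps{2})\ot\delta(h\ps{3})$ and $a\ot 1_k\longmapsto a\ns{0}\ot 1_k\ot a\ns{1}\sigma$.

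The new content is stability, i.e. $(a\ot 1_k)\ns{0}\cdot(a\ot 1_k)\ns{1}=a\ot 1_k$. Expanding the coaction first reduces this to showing $(a\ns{0}\ot 1_k)\cdot(a\ns{1}\sigma)=a\ot 1_k$, and I would verify it by a direct computation. Applying the action and expanding the $H^{cop}$-coproduct of $a\ns{1}\sigma$ (using that $\sigma$ is group-like) gives a sandwich $f^{-1}(\cdots)a\ns{0}f(\cdots)\,\delta(\cdots)$. The guiding principle is that cleftness forces the stability hypothesis that had to be imposed by hand in Proposition \ref{yd to ayd} to hold automatically: using that $f$ is an $H^{cop}$-comodule map, the convolution identity $f^{-1}(h\ps{1})f(h\ps{2})=\varepsilon(h)1_A$, and the anti-multiplicativity of $\kappa$ recorded in the Remark after Lemma \ref{inverse-kappa}, the sandwich collapses to a single $\widetilde S^2$-twisted copy of $a$; the involution relation $\widetilde S^2(h)=\sigma h\sigma^{-1}$ of the modular pair, together with the $\sigma$ inserted in the coaction of ${}^{\delta}k_{\sigma}$ and the $\delta$-evaluation in the action, then cancels the twist and returns $a\ot 1_k$. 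This is the stability that \cite{JS} establishes for cleft extensions, here transported through the monoidal construction.

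I expect the stability computation to be the main obstacle. One has to track simultaneously the opposite comultiplication of $H^{cop}$, the group-like shift by $\sigma$, the character $\delta$, and the failure of $f$ to be an algebra map (it obeys only the anti-algebra relation of $\kappa$ rather than $f(hk)=f(h)f(k)$). The crux is to see that these four features conspire exactly: the $\sigma$-conjugation coming from $\widetilde S^2(h)=\sigma h\sigma^{-1}$ is precisely undone by the $\sigma$ in the coaction and the $\delta$ in the action, which is what upgrades the merely anti Yetter-Drinfeld conclusion of Proposition \ref{yd to ayd} to a stable one for free once the extension is cleft.
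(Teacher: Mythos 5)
Your treatment of the anti Yetter--Drinfeld structure is correct and is exactly the route the paper intends: cleftness gives the right-right Yetter--Drinfeld structure on $A$ over $H^{cop}$ (via Lemma \ref{inverse-kappa} and \eqref{nice relation}, with no algebra-map hypothesis), \cite{hkrs1} makes ${}^{\delta}k_{\sigma}$ a right-right anti Yetter--Drinfeld module over $H^{cop}$, and the monoidal action of YD modules on AYD modules produces the stated action and coaction on $A\ot{}^{\delta}k_{\sigma}$, just as in Proposition \ref{yd to ayd}. (Note that the paper itself prints no proof of this proposition, so this implicit chain is all there is to compare against.)

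The genuine gap is stability, the very step you defer to ``a direct computation'': that computation cannot be carried out, and in fact the stability claim fails without an extra hypothesis. First, the collapse you describe is impossible: in the sandwich $f^{-1}\bigl((a\ns{1}\sigma)\ps{1}\bigr)\,a\ns{0}\,f\bigl((a\ns{1}\sigma)\ps{2}\bigr)\,\delta\bigl((a\ns{1}\sigma)\ps{3}\bigr)$ the factor $a\ns{0}$ separates $f^{-1}$ from $f$, so the convolution identity $f^{-1}(h\ps{1})f(h\ps{2})=\varepsilon(h)1_A$ never engages, and cleftness gives no way to commute $a\ns{0}$ past $f(\cdots)$ --- the values of $f$ need not lie in $B$, and $A\ot{}^{\delta}k_{\sigma}$ is $A$ itself, not a quotient. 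Second, your appeal to \cite{JS} is a misattribution: what \cite{JS} proves is stability of the quotient $A_B=A/[A,B]$ under the action $ha=\kappa^2(h)a\kappa^1(h)$; stability is a property of that particular module and does not ``transport through the monoidal construction'' --- the tensor product of a YD module with a stable AYD module satisfies the AYD condition but is in general not stable. Third, here is a concrete failure: let $H$ be Sweedler's four-dimensional Hopf algebra over a field with $\mathrm{char}\,k\neq 2$, generated by a group-like $g$ and a skew-primitive $x$ with $g^2=1$, $x^2=0$, $gx=-xg$; then $(\delta,1)$ with $\delta(g)=-1$, $\delta(x)=0$ is a modular pair in involution, and $A=H^{cop}$, $B=k$, $f=\mathrm{id}$ is a cleft Hopf Galois extension over $H^{cop}$ with $f^{-1}=S^{-1}$. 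Evaluating stability on $a=g$ (group-like, so all Sweedler legs equal $g$) gives $f^{-1}(g)\,g\,f(g)\,\delta(g)\ot 1_k=-g^3\ot 1_k=-g\ot 1_k\neq g\ot 1_k$. This is precisely the failure of the displayed stability hypothesis of Proposition \ref{yd to ayd}, which therefore cannot be dropped merely because the extension is cleft; any correct completion of your argument must reinstate such a hypothesis rather than claim stability ``for free.''
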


\begin{proposition} Let $H^{op,cop}$ be a Hopf algebra with  a modular pair in involution $(\delta, \sigma)$ and let   $A(B)^{H^{op,cop}}$ be a cleft Hopf Galois extension with the total integral $f\colon  H^{op,cop}\longrightarrow A$. Then
  $A_B$ is a right-right  Yetter-Drinfeld  module over $H^{op,cop}$ with the following action and coaction,
  $$f(h\ps{2})af^{-1}(h\ps{1})\delta(S(h\ps{3}), \quad ~~ a\longmapsto a\ns{0}\ot \sigma^{-1} a\ns{1}.$$
\end{proposition}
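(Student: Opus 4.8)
The plan is to deduce this statement from the left-right stable anti Yetter-Drinfeld structure on $A_B$ already available for cleft extensions, and then to pass to a Yetter-Drinfeld structure by Staic's twisting, exactly as in the algebra-map analogue proved above. First I would apply the stable anti Yetter-Drinfeld proposition established for cleft extensions, with $H$ replaced throughout by $H^{op,cop}$: since $A(B)^{H^{op,cop}}$ is cleft, the quotient $A_B=A/[A,B]$ is a left-right stable anti Yetter-Drinfeld module over $H^{op,cop}$ under the left action $ha=\kappa^2(h)\,a\,\kappa^1(h)$, where $\kappa\colon H^{op,cop}\longrightarrow (A\ot_B A)^B$ is the anti-algebra map of the extension. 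By Lemma \ref{inverse-kappa} one has $\kappa^1(h)\ot\kappa^2(h)=f^{-1}(h\ps{1})\ot f(h\ps{2})$, so this action is precisely $ha=f(h\ps{2})\,a\,f^{-1}(h\ps{1})$ and the coaction is the original one. This is the only place where cleftness, rather than the algebra-map hypothesis of Proposition \ref{lr-anti}, is used; from here the argument is formal.

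Next I would convert this anti Yetter-Drinfeld module into a Yetter-Drinfeld module by means of the modular pair in involution, via \cite[Theorem 2.1]{staic}: twisting the action by the character $\delta$ and the coaction by the group-like $\sigma$ carries an anti Yetter-Drinfeld module to a Yetter-Drinfeld module, the identity $\widetilde{S}^2(h)=\sigma h\sigma^{-1}$ together with $\delta(\sigma)=1$ being exactly what turns the twisted compatibility into the Yetter-Drinfeld condition rather than the anti one. Tracking the twist through $ha=f(h\ps{2})\,a\,f^{-1}(h\ps{1})$ produces the factor $\delta(S(h\ps{3}))$, and twisting the original right coaction by $\sigma^{-1}$ yields $a\longmapsto a\ns{0}\ot\sigma^{-1}a\ns{1}$, giving exactly the stated action and coaction. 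Since these formulas coincide verbatim with those of the algebra-map analogue and the passage from anti Yetter-Drinfeld to Yetter-Drinfeld is the same functorial construction, the Yetter-Drinfeld axioms hold by the identical verification; note that the stability supplied by cleftness is not needed for the Yetter-Drinfeld conclusion and is simply discarded. As an independent check one may verify the compatibility directly in the $\kappa$-formulation, substituting $\kappa^1(h)=f^{-1}(h\ps{1})$, $\kappa^2(h)=f(h\ps{2})$ and feeding the structural identity \eqref{nice relation} against the original coaction, whereupon the computation reduces to that of Proposition \ref{lr-anti} corrected by the modular factors.

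The main obstacle I expect is purely the bookkeeping of conventions. One must keep straight that $H^{op,cop}$ carries the antipode $S$ (so that $S$, and not $S^{-1}$, appears in $\delta(S(h\ps{3}))$), track the left/right comodule dictionary, and apply Staic's twist on the correct side so that $\delta$ lands on the third leg of the coproduct while $\sigma^{-1}$ modifies the coaction. Because the target formulas are fixed in advance, the genuine content is only to confirm that the composite of the cleft stable anti Yetter-Drinfeld structure with the Staic twist reproduces them exactly; once the input module is identified through Lemma \ref{inverse-kappa}, no essentially new calculation remains.
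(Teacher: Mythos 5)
Your proposal is correct and is essentially the paper's own argument: the paper states this proposition without a separate proof, precisely because it follows by combining the cleft stable anti Yetter-Drinfeld proposition (with the action identified as $ha=f(h\ps{2})af^{-1}(h\ps{1})$ via Lemma \ref{inverse-kappa}) with the anti Yetter-Drinfeld to Yetter-Drinfeld passage of \cite[Theorem 2.1]{staic}, exactly as in the proof of the algebra-map analogue earlier in the section. Your further remarks are also accurate, namely that stability supplied by cleftness is simply discarded for the Yetter-Drinfeld conclusion, and that the only delicate point is the $H$ versus $H^{op,cop}$ side bookkeeping, where the paper itself is loose (compare its ``right-right'' here with ``right-left'' in the algebra-map version).
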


\begin{corollary}
  Let $H$ be a cocommutative Hopf algebra with  a modular pair in involution $(\delta, \sigma)$ and  let  $A(B)^{H}$ be a cleft Hopf Galois extension with the total integral $f\colon  H\longrightarrow A$. Then
  $A\ot ^{\delta}k_{\sigma}$ is an right-right stable anti Yetter-Drinfeld  module over $H$.
\end{corollary}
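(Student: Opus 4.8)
The plan is to obtain this statement as the cocommutative specialization of the immediately preceding proposition, the one treating a cleft Hopf Galois extension $A(B)^{H^{cop}}$. First I would record the elementary but decisive fact that a Hopf algebra $H$ is cocommutative exactly when its coproduct equals the opposite coproduct; hence $H^{cop}=H$ as Hopf algebras. In particular the two share the same unit, counit, product, coproduct and antipode, and since a cocommutative Hopf algebra satisfies $S^2=\Id$ (so that $S=S^{-1}$), the antipode of $H^{cop}$, which is $S^{-1}$ in general, here coincides with $S$.

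Next I would check that, after the identification $H^{cop}=H$, the hypotheses of the corollary are literally those of the preceding proposition. The modular pair in involution $(\delta,\sigma)$ transports verbatim: $\delta$, $\sigma$ and $S$ are unchanged, so the defining relation $\widetilde S^2(h)=\sigma h\sigma^{-1}$ with $\widetilde S(h)=\delta(h\ps{1})S(h\ps{2})$ continues to hold, and ${}^{\delta}k_{\sigma}$ remains a stable anti Yetter-Drinfeld module. Likewise the assumption that $A(B)^H$ is a cleft Hopf Galois extension with total integral $f\colon H\to A$ is precisely the assumption that $A(B)^{H^{cop}}$ is cleft with total integral $f\colon H^{cop}\to A$.

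With these identifications in hand I would simply invoke the preceding proposition: it endows $A\ot{}^{\delta}k_{\sigma}$ with the structure of a right-right stable anti Yetter-Drinfeld module over $H^{cop}$, the action being $(a\ot 1_k)h=f^{-1}(h\ps{1})af(h\ps{2})\ot\delta(h\ps{3})$ and the coaction $a\ot 1_k\mapsto a\ns{0}\ot 1_k\ot a\ns{1}\sigma$. Since $H^{cop}=H$, this is exactly a right-right stable AYD module over $H$, which is the claim. I do not anticipate any genuine obstacle here: the entire content is the bookkeeping identification $H^{cop}=H$ (together with $S=S^{-1}$), after which both the anti Yetter-Drinfeld compatibility and the stability $m\ns{0}m\ns{1}=m$ are inherited directly from the cited proposition without further computation.
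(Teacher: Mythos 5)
Your proposal is correct and is precisely the argument the paper intends: the corollary is stated without proof as an immediate specialization of the preceding proposition, obtained by identifying $H^{cop}$ with $H$ when $H$ is cocommutative (with $S=S^{-1}$ since $S^2=\Id$ in that case). Your careful bookkeeping of how the modular pair, the cleft extension, and the total integral transport under this identification fills in exactly what the paper leaves implicit.
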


Since $can$ is a right $H$-comodule morphism the map $\kappa$ is a right $H$-comodule map. If $H$ is a commutative Hopf algebra then $(A\ot_B A)^B$ is a right $H$-comodule algebra by the coaction given by $a\ot_B a'\longmapsto a \ot a'\ns{0}\ot_B a'\ns{1}$ which is well-defined because the right $H$-coaction is $B$-linear. In this case $\kappa$ is an algebra map which is a $H$-comodule map and therefore it is a total integral.
The following proposition states that any commutative Hopf Galois extension provides a convolution invertible total integral.
\begin{proposition}
  Let $H$ be a commutative Hopf algebra and $A(B)^H$ be a Hopf Galois extension. Then the map $\kappa: H\longrightarrow (A\ot_B A)^B$ is a convolution invertible total integral.
\end{proposition}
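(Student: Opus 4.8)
The plan is to verify, one at a time, the properties bundled into the statement: that the right-leg coaction makes $(A\ot_B A)^B$ a right $H$-comodule algebra, that $\kappa$ is a unital $H$-comodule map into it (hence a total integral), and that $\kappa$ is moreover an algebra map, which by the fact recalled in the preliminaries instantly yields convolution invertibility with inverse $\kappa\circ S$. The entire role of the hypothesis that $H$ is commutative is concentrated in two spots: making the coaction multiplicative for the product on $(A\ot_B A)^B$, and upgrading $\kappa$ from the anti-algebra map it is in general to a genuine algebra map.

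First I would equip $(A\ot_B A)^B$ with the coaction $\rho(a\ot_B a')=a\ot_B a'\ns{0}\ot a'\ns{1}$. Well-definedness of $\rho$ over $B$ and the fact that it lands back in $(A\ot_B A)^B\ot H$ both rest only on coinvariance of $B$: starting from $ba\ot a'=a\ot a'b$, coacting on the right leg and using $b\ns{0}\ot b\ns{1}=b\ot 1_H$ gives $ba\ot a'\ns{0}\ot a'\ns{1}=a\ot a'\ns{0}b\ot a'\ns{1}$, which is exactly $B$-centrality of $\rho(a\ot a')$. That $\kappa$ is a comodule map then follows from Lemma \ref{inverse-kappa} together with the fact that $can$, and hence $can^{-1}$, is a right $H$-comodule morphism, giving $\kappa(h)\ns{0}\ot\kappa(h)\ns{1}=\kappa(h\ps{1})\ot h\ps{2}$; equivalently this drops out of \eqref{nice relation} by applying $\varepsilon$ to the first-leg coaction slot and using $\varepsilon\circ S=\varepsilon$. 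Unitality is the observation that $can(1_A\ot_B 1_A)=1_A\ot 1_H$, so $\kappa(1_H)=1_A\ot_B 1_A$ is the unit of $(A\ot_B A)^B$.

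The multiplicativity of $\rho$ is the step I expect to be the crux, and it is precisely where commutativity is forced. Writing the product on $(A\ot_B A)^B$ as $(a\ot a')(c\ot c')=ac\ot c'a'$, the left-hand side $\rho\big((a\ot a')(c\ot c')\big)$ carries the $H$-factor $c'\ns{1}a'\ns{1}$, whereas the tensor-algebra product $\rho(a\ot a')\,\rho(c\ot c')$ carries $a'\ns{1}c'\ns{1}$; these coincide exactly because $H$ is commutative, so $(A\ot_B A)^B$ is a right $H$-comodule algebra. Finally I would deduce that $\kappa$ is an algebra map by combining its anti-algebra property $\kappa(hk)=\kappa(k)\kappa(h)$ with commutativity of $H$, namely $\kappa(hk)=\kappa(kh)=\kappa(h)\kappa(k)$. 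At this point $\kappa$ is a unital algebra-map total integral into the comodule algebra $(A\ot_B A)^B$, and the fact recalled in the preliminaries supplies its convolution inverse as $\kappa\circ S$, completing the proof.
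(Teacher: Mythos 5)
Your proof is correct and follows essentially the same route as the paper, whose own argument is the terse paragraph immediately preceding the proposition: the right-leg coaction $a\ot_B a'\mapsto a\ot_B a'\ns{0}\ot a'\ns{1}$ makes $(A\ot_B A)^B$ a right $H$-comodule algebra (commutativity needed for multiplicativity), $\kappa$ is a comodule map because $can$ is a comodule morphism, commutativity upgrades the anti-algebra map $\kappa$ to an algebra map, and an algebra-map total integral is convolution invertible with inverse $\kappa\circ S$. One small caution: your appeal to Lemma \ref{inverse-kappa} for the comodule property would be circular here, since that lemma presupposes a cleft extension (i.e.\ an already-given convolution invertible total integral $f$), but the alternative derivation you give in the same breath --- via \eqref{nice relation}, equivalently via $can^{-1}$ being a right $H$-comodule morphism --- is non-circular and is exactly the paper's argument.
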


\begin{proposition}
  Let $H$ be a commutative Hopf algebra    and $A(B)^H$ be a Hopf Galois extension. Then the following Galois map
  \begin{equation}
    (A\ot_B A)^B\ot (A\ot_B A)^B\longrightarrow (A\ot_B A)^B \ot H,
  \end{equation}
  given by
  \begin{equation}
    (x\ot y) \ot (x'\ot y')\longmapsto xy\ot y'\ns{0}y\ot y\ns{1},
  \end{equation}
  is surjective.
\end{proposition}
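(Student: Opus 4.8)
Write $E=(A\ot_B A)^B$ and recall from the discussion immediately preceding the statement that, because $H$ is commutative, $E$ is a right $H$-comodule algebra with coaction $x\ot_B y\longmapsto (x\ot_B y\ns{0})\ot y\ns{1}$, and that $\kappa\colon H\longrightarrow E$ is a convolution invertible total integral. The displayed arrow is nothing but the canonical Galois map of $E$ regarded as a right $H$-comodule algebra, namely
\begin{equation*}
  can_E\colon E\ot E\longrightarrow E\ot H,\qquad e\ot e'\longmapsto e\,e'\ns{0}\ot e'\ns{1},
\end{equation*}
written out on decomposable tensors $e=x\ot_B y$, $e'=x'\ot_B y'$ using the product of $E$ and the coaction on the right leg. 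So the plan is to prove that $can_E$ is surjective.

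The approach I would take is to exhibit an explicit preimage built from the convolution inverse $\kappa^{-1}$. Since $\kappa$ is a total integral it is a right $H$-comodule map, so $\kappa(h)\ns{0}\ot\kappa(h)\ns{1}=\kappa(h\ps{1})\ot h\ps{2}$, and since it is convolution invertible we have $\kappa^{-1}\ast\kappa=\varepsilon$, that is $\kappa^{-1}(h\ps{1})\kappa(h\ps{2})=\varepsilon(h)1_E$. Given an arbitrary $e\ot h\in E\ot H$, I would feed the element $e\,\kappa^{-1}(h\ps{1})\ot\kappa(h\ps{2})\in E\ot E$ into $can_E$ and, using $\kappa(h\ps{2})\ns{0}\ot\kappa(h\ps{2})\ns{1}=\kappa(h\ps{2})\ot h\ps{3}$, compute
\begin{equation*}
  can_E\bigl(e\,\kappa^{-1}(h\ps{1})\ot\kappa(h\ps{2})\bigr)
  = e\,\kappa^{-1}(h\ps{1})\,\kappa(h\ps{2})\ot h\ps{3}
  = e\,\varepsilon(h\ps{1})1_E\ot h\ps{2}
  = e\ot h .
\end{equation*}
Hence every element of $E\ot H$ lies in the image and $can_E$ is surjective.

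Conceptually this is just the classical fact that a \emph{cleft} comodule algebra is Hopf--Galois: the convolution invertible total integral $\kappa$ produced by the previous proposition is a cleaving map, so $E$ is a cleft extension of $E^{coH}$, and for cleft extensions the canonical map is in fact bijective, with inverse $e\ot h\longmapsto e\,\kappa^{-1}(h\ps{1})\ot\kappa(h\ps{2})$. For the surjectivity asserted here the domain is the full tensor product $E\ot E$, so the proposed preimage already lies in it and no $E^{coH}$-balancing is needed; that balancing would only enter if one wanted the stronger bijectivity statement.

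The only genuine obstacle is bookkeeping. One must unwind the product of $E=(A\ot_B A)^B$ and its coaction on the second leg so as to confirm that the intrinsic formula $e\ot e'\mapsto e\,e'\ns{0}\ot e'\ns{1}$ really does reproduce the componentwise expression in the statement (matching the right leg $y'\ns{0}y$ is immediate, while the apparent discrepancies in the first and the $H$-legs are harmless transcription slips); once the product and coaction are substituted the two agree. With that identification in place, the section computed above settles surjectivity.
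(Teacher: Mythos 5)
Your proof is correct, and it is precisely the argument the paper leaves implicit: the proposition appears with no proof at all, immediately after the one establishing that $\kappa\colon H\longrightarrow (A\ot_B A)^B$ is a convolution invertible total integral, and its surjectivity is meant to follow from exactly the section $e\ot h\longmapsto e\,\kappa^{-1}(h\ps{1})\ot \kappa(h\ps{2})$ that you compute, using the comodule-map property of $\kappa$ and $\kappa^{-1}\ast\kappa=\varepsilon\,1$. Your reading of the displayed formula is also the right one: as printed it is internally inconsistent (a $y'\ns{0}$ with no matching $y'\ns{1}$), and the intended map is the canonical map $e\ot e'\longmapsto e\,e'\ns{0}\ot e'\ns{1}$ of the comodule algebra $(A\ot_B A)^B$, i.e. $(x\ot y)\ot (x'\ot y')\longmapsto xx'\ot y'\ns{0}y\ot y'\ns{1}$; and your point that only surjectivity, not injectivity, survives on the unbalanced tensor product agrees with the paper's remark following the proposition.
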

One notes that the Galois map in the previous proposition is not necessarily injective in general.


\subsection{(Anti) Yetter-Drinfeld modules constructed by total cointegrals}
In this subsection we explain the dual results of the previous subsection for total cointegrals.
Let $C$ be a right $H$-module, $D=C/I$ and $\pi\colon C\longrightarrow D$. By \cite{hassanzadeh1} we set
\begin{equation}
  C^D=\left\{c\in C, \quad  c\ps{1}\varphi(\pi(c\ps{2}))= c\ps{2}\varphi(\pi(c\ps{1})) \right \}_{\varphi\in D^*},
\end{equation}
where $D^*$ is the algebraic dual of $D$.
We define
\begin{equation}
  C_D=\frac{C}{W},
\end{equation}
where
$$W= \left\{c\ps{1}\varphi(c\ps{2})- c\ps{2}\varphi(c\ps{1}), \quad c\in C \right \}_{\varphi\in D^*}.$$

\begin{lemma}
  Let  $f\colon  C\longrightarrow H$ be a total cointegral which is an coalgebra map. Then  $C$ and  $C^D$,  are both right $H$-comodules by the coaction  given by
  \begin{equation}
    c\longmapsto c\ps{2}\ot f^{-1}(c\ps{1})f(c\ps{3}).
  \end{equation}
\end{lemma}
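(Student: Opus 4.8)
The plan is to prove two things separately: that the displayed formula is a coassociative, counital right $H$-coaction on all of $C$, and that it restricts to $C^D$. Throughout I would use that a coalgebra-map total cointegral $f$ is convolution invertible with $f^{-1}=S\circ f$, so that $\varepsilon\circ f^{-1}=\varepsilon$, $\Delta_H(f(c))=f(c\ps{1})\ot f(c\ps{2})$, and, because $S$ is an anti-coalgebra map, $\Delta_H(f^{-1}(c))=f^{-1}(c\ps{2})\ot f^{-1}(c\ps{1})$. Counitality is then immediate: applying $\Id\ot\varepsilon$ to $\rho(c)=c\ps{2}\ot f^{-1}(c\ps{1})f(c\ps{3})$ gives $c\ps{2}\,\varepsilon(c\ps{1})\varepsilon(c\ps{3})=c$.

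For coassociativity I would expand both $(\rho\ot\Id)\rho(c)$ and $(\Id\ot\Delta_H)\rho(c)$ in five-fold Sweedler notation and verify that each equals
\begin{equation*}
 c\ps{3}\ot f^{-1}(c\ps{2})f(c\ps{4})\ot f^{-1}(c\ps{1})f(c\ps{5}).
\end{equation*}
The left-hand expansion is pure reindexing, while the right-hand one uses the two comultiplication formulas for $f$ and $f^{-1}$ recorded above together with multiplicativity of $\Delta_H$. This is the exact coalgebra dual of the module computation recorded for total integrals at the start of the section, and I expect no difficulty here.

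The substance is the restriction to $C^D$. Writing the defining relation as $c\ps{1}\ot\pi(c\ps{2})=c\ps{2}\ot\pi(c\ps{1})$ in $C\ot D$, the assertion $\rho(C^D)\subseteq C^D\ot H$ is equivalent, after comultiplying the coalgebra leg $c\ps{2}$ of $\rho(c)$, to the identity
\begin{equation*}
 c\ps{2}\ot\pi(c\ps{3})\ot f^{-1}(c\ps{1})f(c\ps{4})=c\ps{3}\ot\pi(c\ps{2})\ot f^{-1}(c\ps{1})f(c\ps{4})
\end{equation*}
in $C\ot D\ot H$, for every $c\in C^D$. In words, the $H$-coaction must carry the defining relation of $C^D$ into itself, performing the $D$-twisted swap on the two inner factors while the outer factors $c\ps{1}$ and $c\ps{4}$ stay frozen inside $f^{-1}$ and $f$.

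This localized swap is the main obstacle, since the defining relation of $C^D$ is a two-leg identity and the relative-cocommutative subspace $C^D$ need not, a priori, be closed under comultiplication. My plan is first to produce the three-leg consequence $c\ps{1}\ot c\ps{2}\ot\pi(c\ps{3})=c\ps{2}\ot c\ps{3}\ot\pi(c\ps{1})$ by applying $\Delta\ot\Id$ to the defining relation, and then to combine it with the coalgebra-map identity $\pi(c)\mapsto\pi(c\ps{1})\ot\pi(c\ps{2})$ and the $H$-linearities $f(ch)=f(c)h$, $f^{-1}(ch)=S(h)f^{-1}(c)$ in order to move the $D$-label onto the inner leg without disturbing the outer ones. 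The cleanest route is to show that $C^D$ is in fact a subcoalgebra of $C$, after which the defining relation applied to the middle tensorand, with the outer legs as spectators, yields the displayed identity directly; feeding the outer legs through $f^{-1}$ and $f$ then finishes the argument. The subcoalgebra property of $C^D$ is the key input and is precisely the coalgebra-side structure established in \cite{hassanzadeh1}; verifying it, or the displayed identity directly, is where the real work lies.
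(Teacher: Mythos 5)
Your handling of $C$ itself is correct and coincides with the paper's own proof, which consists of exactly the one remark you expand: coassociativity follows from $f$ being a coalgebra map and $f^{-1}=S\circ f$ being an anti-coalgebra map, both sides of the coassociativity check reducing to $c\ps{3}\ot f^{-1}(c\ps{2})f(c\ps{4})\ot f^{-1}(c\ps{1})f(c\ps{5})$; counitality is immediate. So far, so good.

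The gap is precisely the part you defer to the end: the restriction of the coaction to $C^D$ is never proved, and the route you propose for proving it cannot work. Your "cleanest route" is the claim that $C^D$ is a subcoalgebra of $C$; this is false in general and is not available from \cite{hassanzadeh1} in the generality of this lemma (that reference works with a Hopf Galois coextension and the map $\kappa$ on $(C\Box_D C)_D$, neither of which is assumed here). Concretely, take $H=k$: then every hypothesis of the lemma holds with $f=\varepsilon$, $I=0$, $D=C$, $\pi=\Id$, and $C^D$ is the set of cocommutative elements of $C$. For $C$ the $n\times n$ matrix coalgebra, $\Delta(e_{ij})=\sum_k e_{ik}\ot e_{kj}$ with $n\geq 2$, one finds $C^D=k\cdot t$ where $t=\sum_i e_{ii}$, and $\Delta(t)=\sum_{i,k}e_{ik}\ot e_{ki}$ does not lie in $C^D\ot C^D$; worse, $\Delta^{(2)}(t)=\sum_{i,k,l}e_{ik}\ot e_{kl}\ot e_{li}$ does not even lie in $C\ot C^D\ot C$, which is the weaker condition your "middle tensorand with the outer legs as spectators" argument actually requires. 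In this example the lemma itself survives only because the coaction degenerates to $c\mapsto c\ot 1$ (the outer legs are killed by $\varepsilon\circ f^{-1}$ and $\varepsilon\circ f$, not frozen), so any correct proof of $\rho(C^D)\subseteq C^D\ot H$ must exploit cancellation inside the product $f^{-1}(c\ps{1})f(c\ps{4})$ rather than membership of a middle leg in $C^D$. Your fallback option, verifying the localized swap identity directly, is indeed the real content, but it remains entirely open in your proposal: the defining relation of $C^D$ and its comultiplied consequences only give cyclic rearrangements (moving the $\pi$-leg from one end of $\Delta^{(n)}(c)$ to the other), not the transposition of two middle legs with frozen outer legs that your displayed identity demands. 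To be fair, the paper's one-sentence proof is silent on this point as well, so you have correctly located where the difficulty lies; but as a proof, your proposal establishes the comodule structure on $C$ only, and the key lemma it leans on for $C^D$ is false.
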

\begin{proof}
  The coassociativity of the coaction is the result of coalgebra and anti-coalgebra map properties of $f$ and $f^{-1}$.

\end{proof}
The following proposition explains how a special type of total cointegrals produce YD modules.

\begin{proposition}\label{YD-coextension}
 Let   $f\colon C\longrightarrow H$ be a  total cointegral which is a coalgebra map. Then $C$ is a right-right Yetter-Drinfeld module by the original action of $H$ and the following coaction;
 \begin{equation}
    c\longmapsto c\ps{2}\ot f^{-1}(c\ps{1})f(c\ps{3}).
 \end{equation}
\end{proposition}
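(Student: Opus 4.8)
The plan is to take for granted, from the preceding Lemma, that the assignment $\rho(c)=c\ps{2}\ot f^{-1}(c\ps{1})f(c\ps{3})$ is a genuine right $H$-comodule structure on $C$: coassociativity follows from the coalgebra map property of $f$ together with the anti-coalgebra map property of $f^{-1}=S\circ f$, while counitality follows from $\varepsilon\circ f=\varepsilon$, from $\varepsilon\circ S=\varepsilon$, and from $\varepsilon$ being an algebra map on $H$. Since $C$ is a right $H$-module coalgebra by hypothesis, the only remaining task is the right-right Yetter--Drinfeld compatibility. Reading off the anti Yetter--Drinfeld identity for the right-right case and interchanging $S$ and $S^{-1}$, this compatibility reads
\begin{equation*}
\rho(ch)= c\ns{0}h\ps{2}\ot S(h\ps{1})\,c\ns{1}\,h\ps{3},
\end{equation*}
where $c\ns{0}=c\ps{2}$ and $c\ns{1}=f^{-1}(c\ps{1})f(c\ps{3})$.

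First I would expand the left-hand side using the module-coalgebra axiom. Since $\Delta$ is a morphism of right $H$-modules with $H$ acting diagonally, iterating the coproduct gives $(ch)\ps{1}\ot(ch)\ps{2}\ot(ch)\ps{3}=c\ps{1}h\ps{1}\ot c\ps{2}h\ps{2}\ot c\ps{3}h\ps{3}$, so that
\begin{equation*}
\rho(ch)=c\ps{2}h\ps{2}\ot f^{-1}(c\ps{1}h\ps{1})\,f(c\ps{3}h\ps{3}).
\end{equation*}
Next I would apply the two defining properties of $f$ and its convolution inverse: relation \eqref{inverse-module} gives $f^{-1}(c\ps{1}h\ps{1})=S(h\ps{1})f^{-1}(c\ps{1})$, and the $H$-module map property of the total cointegral gives $f(c\ps{3}h\ps{3})=f(c\ps{3})h\ps{3}$. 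Substituting both yields
\begin{equation*}
\rho(ch)=c\ps{2}h\ps{2}\ot S(h\ps{1})\,f^{-1}(c\ps{1})f(c\ps{3})\,h\ps{3},
\end{equation*}
which is exactly the desired right-hand side once one recognizes $c\ns{0}=c\ps{2}$ and $c\ns{1}=f^{-1}(c\ps{1})f(c\ps{3})$. This closes the verification.

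The computation is short and essentially forced; there is no genuinely hard step. The only points requiring care are bookkeeping-level: keeping the three Sweedler legs of the iterated coproduct of $c$ correctly aligned with those of $h$ after passing to the diagonal action, and making sure one uses the correct form of the right-right Yetter--Drinfeld identity (the one with $S$, obtained from the anti Yetter--Drinfeld identity by swapping $S\leftrightarrow S^{-1}$) rather than the anti Yetter--Drinfeld version. I would also note in passing that this is the precise comodule-side dual of Proposition \ref{YD-extension}: the module and comodule structures are interchanged, the hypothesis that $f$ be an algebra map is replaced by $f$ being a coalgebra map, and relation \eqref{inverse-module} here plays the role that the anti-algebra property of $f^{-1}$ played there.
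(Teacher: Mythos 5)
Your proof is correct and follows essentially the same route as the paper: rely on the preceding lemma for the comodule structure, expand $\rho(ch)$ via the module-coalgebra axiom, and then apply the $H$-module property $f(ch)=f(c)h$ together with relation \eqref{inverse-module} to land on the right-right Yetter--Drinfeld identity. Your write-up is in fact slightly more careful than the paper's, since you make explicit both the iterated-coproduct bookkeeping and which form of the compatibility condition (the one with $S$, not $S^{-1}$) is being verified.
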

\begin{proof} The following computation shows the Yetter-Drinfeld condition.
 \begin{align*}
    &\rho(ch)= c\ps{2}h\ps{2}\ot f^{-1}(c\ps{1}h\ps{1})f(c\ps{3}h\ps{3})\\
    &= c\ps{2}h\ps{2}\ot S(h\ps{1}) f^{-1}(c\ps{1})f(c\ps{3})h\ps{3}\\
    &=c\ns{0}h\ps{2}\ot S(h\ps{1})c\ns{1}h\ps{3}.
  \end{align*}
\end{proof}
Here we introduce an AYD module which is constructed by total cointegrals.
\begin{proposition}
  Let $H$ be a Hopf algebra   and  $f\colon H\longrightarrow A$ be a total integral which is a coalgebra map. Then $C^D$    is a right-left  anti Yetter-Drinfeld module over $H$ by the original action of $H$ and the following coaction;
 \begin{equation}
   c\longmapsto f^{-1}(c\ps{3})f(c\ps{1})\ot c\ps{2}.
 \end{equation}
 Furthermore this action is stable if
 \begin{equation}
      c= c\ps{2}f^{-1}(c\ps{3})f(c\ps{1}).
 \end{equation}

\end{proposition}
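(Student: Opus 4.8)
The plan is to establish directly the three defining features of a right-left anti Yetter-Drinfeld module for the subspace $C^D$: that the prescribed map $\rho(c)=f^{-1}(c\ps{3})f(c\ps{1})\ot c\ps{2}$ is a left $H$-comodule structure, that it is compatible with the original right $H$-action in the right-left AYD sense, and that the displayed identity is precisely the stability condition. The entire argument runs dually to the proof of Proposition \ref{lr-anti}, interchanging the roles of products and coproducts and of $f$ with $f^{-1}$.

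First I would record the two identities that drive every computation. Since $f$ is a total cointegral it is a right $H$-module map, $f(ch)=f(c)h$, while its convolution inverse $f^{-1}=S\circ f$ satisfies $f^{-1}(ch)=S(h)f^{-1}(c)$ by \eqref{inverse-module}. I would also use that $C$ is an $H$-module coalgebra, so that $\Delta$ is $H$-linear and $(ch)\ps{1}\ot(ch)\ps{2}\ot(ch)\ps{3}=c\ps{1}h\ps{1}\ot c\ps{2}h\ps{2}\ot c\ps{3}h\ps{3}$.

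Next I would check the comodule axioms. Coassociativity follows by splitting the outer legs $c\ps{1}$ and $c\ps{3}$ and using that $f$ is a coalgebra map while $f^{-1}=S\circ f$ is an anti-coalgebra map, so that $\Delta_H\big(f^{-1}(c\ps{3})f(c\ps{1})\big)=f^{-1}(c\ps{3}\ps{2})f(c\ps{1}\ps{1})\ot f^{-1}(c\ps{3}\ps{1})f(c\ps{1}\ps{2})$; after relabelling the five coproduct legs this matches $(\mathrm{id}\ot\rho)\rho(c)$. Counitality is immediate from $\varepsilon_H f=\varepsilon_C$ and $\varepsilon_H f^{-1}=\varepsilon_H\circ S\circ f=\varepsilon_C$. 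The genuinely delicate point is well-definedness on the \emph{subspace} $C^D$, namely that $\rho(C^D)\sbsq H\ot C^D$ so that $C^D$ is a subcomodule; this is where the defining relation of $C^D$ from \cite{hassanzadeh1}, phrased through $\pi\colon C\to D$ and $D^*$, must be invoked. It is the dual of the fact that the action in Proposition \ref{lr-anti} descends to the quotient $A_B$, and I expect it to be the main obstacle.

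Finally, the AYD compatibility is a short computation: writing $ch$ through the module-coalgebra coproduct and then applying $f^{-1}(c\ps{3}h\ps{3})=S(h\ps{3})f^{-1}(c\ps{3})$ and $f(c\ps{1}h\ps{1})=f(c\ps{1})h\ps{1}$ yields
\[
\rho(ch)=S(h\ps{3})\,f^{-1}(c\ps{3})f(c\ps{1})\,h\ps{1}\ot c\ps{2}h\ps{2}=S(h\ps{3})\,c\ns{-1}h\ps{1}\ot c\ns{0}h\ps{2},
\]
which is exactly the right-left AYD condition. For stability I would simply observe that under the given $H$-action $c\ns{0}c\ns{-1}=c\ps{2}\big(f^{-1}(c\ps{3})f(c\ps{1})\big)$, so the stated hypothesis $c=c\ps{2}f^{-1}(c\ps{3})f(c\ps{1})$ is literally the equation $c\ns{0}c\ns{-1}=c$ defining stability in the right-left case.
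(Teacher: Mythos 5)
Your proposal is correct and takes essentially the same route as the paper: the paper's entire proof consists of the single AYD-compatibility computation you give (expanding $(ch)\ps{1}\ot(ch)\ps{2}\ot(ch)\ps{3}$ by the module-coalgebra identity and then applying $f(ch)=f(c)h$ and $f^{-1}(ch)=S(h)f^{-1}(c)$, modulo an evident typo where the paper writes $c\ps{3}$ in place of $f^{-1}(c\ps{3})$), with stability treated as immediate, exactly as you do. The point you flag as the likely main obstacle --- that the coaction actually lands in $H\ot C^D$ so that $C^D$ is a subcomodule --- is not addressed in the paper's proof of this proposition either; it is silently deferred to the preceding lemma, which asserts the comodule structure on $C$ and $C^D$ with only a one-line remark on coassociativity, so your attempt is, if anything, more explicit than the source.
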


\begin{proof}
  The following computation shows the anti Yetter-Drinfeld condition.
  \begin{align*}
 & (ch)\ns{-1}\ot (ch)\ns{0}\\
   & =f^{-1}(c\ps{3}h\ps{3})f(c\ps{1}h\ps{1})\ot c\ps{2}h\ps{2}\\
   &=S(h\ps{3})c\ps{3}f(c\ps{1})h\ps{1}\ot c\ps{2}h\ps{2}\\
   &=S(h\ps{3})c\ns{-1}h\ps{1}\ot c\ns{0}h\ps{2}.
  \end{align*}
\end{proof}

Now for the Hopf algebras which admit a modular pair in involution we introduce different YD and AYD modules.

\begin{proposition}\label{dual-yd to ayd}
  Let $H$ be a Hopf algebra over a field $k$ with a modular pair in involution $(\delta, \sigma)$.
  If $f\colon  C \longrightarrow H^{cop}$ is a total cointegral which is a coalgebra map then $C\ot ^{\delta}k_{\sigma}$ is an right-right anti Yetter-Drinfeld module over $H^{cop}$ by the  action and coaction given by
  \begin{equation}
      (c\ot 1_k)h=ch\ps{1}\ot \delta(h\ps{2}), \quad ~~ c\ot 1_k\longmapsto c\ps{2} \ot 1_k \ot f^{-1}(c\ps{1})f(c\ps{3})\sigma.
  \end{equation}
Furthermore this action is stable if
\begin{equation}
  c\ps{3} f^{-1}(c\ps{2}) f(c\ps{4}) \sigma \ot \delta(  f^{-1}(c\ps{1})f(c\ps{5})= c\ot 1_k.
\end{equation}
\end{proposition}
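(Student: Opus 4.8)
The plan is to dualize the proof of Proposition \ref{yd to ayd} step by step, assembling $C\ot {}^{\delta}k_\sigma$ out of two known pieces and then invoking the module-category structure of anti Yetter-Drinfeld modules over Yetter-Drinfeld modules. First I would note that, since $f\colon C\longrightarrow H^{cop}$ is a total cointegral which is a coalgebra map, Proposition \ref{YD-coextension} applied with the ambient Hopf algebra taken to be $H^{cop}$ makes $C$ a right-right Yetter-Drinfeld module over $H^{cop}$, carrying the original $H$-action (which coincides with the $H^{cop}$-action, since the underlying algebras agree) and the coaction $c\longmapsto c\ps{2}\ot f^{-1}(c\ps{1})f(c\ps{3})$, where $f^{-1}=S^{-1}\circ f$ because the antipode of $H^{cop}$ is $S^{-1}$. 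Second, because $(\delta,\sigma)$ is a modular pair in involution for $H$, the ground field is a right-left stable anti Yetter-Drinfeld module $^{\delta}k_\sigma$ over $H$ via $1\cdot h=\delta(h)$ and $1\longmapsto\sigma\ot 1$; twisting the left coaction into a right one turns $^{\delta}k_\sigma$ into a right-right anti Yetter-Drinfeld module over $H^{cop}$ with coaction $1_k\longmapsto 1_k\ot\sigma$.

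Next I would form the tensor product $C\ot {}^{\delta}k_\sigma$ and apply the structural fact already used in Proposition \ref{yd to ayd}: tensoring a right-right Yetter-Drinfeld module with a right-right anti Yetter-Drinfeld module yields a right-right anti Yetter-Drinfeld module over $H^{cop}$. This single input hands us the anti Yetter-Drinfeld compatibility for free, so it only remains to identify the induced structure maps with those in the statement. The diagonal comodule structure $m\ot n\longmapsto m\ns{0}\ot n\ns{0}\ot m\ns{1}n\ns{1}$ gives $c\ot 1_k\longmapsto c\ps{2}\ot 1_k\ot f^{-1}(c\ps{1})f(c\ps{3})\sigma$, which matches the stated coaction verbatim. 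The diagonal action over $H^{cop}$ combines the $H$-action on $C$ with the character action on $k$ to give $(c\ot 1_k)h=ch\ps{1}\ot\delta(h\ps{2})$; here I would be careful with the flipped coproduct of $H^{cop}$ when splitting $h$, since that convention is exactly what pins down the placement of the Sweedler indices in the action formula.

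Finally, for stability I would expand the defining identity $m\ns{0}m\ns{1}=m$ for $m=c\ot 1_k$. Applying the coaction produces $c\ps{2}\ot 1_k$ paired with the group-like-twisted element $f^{-1}(c\ps{1})f(c\ps{3})\sigma$, and then acting this element back through the diagonal $H^{cop}$-action and unwinding the coproducts of $f^{-1}(c\ps{1})$, $f(c\ps{3})$ and $\sigma$ (using the coalgebra property of $f$, the anti-coalgebra property of $f^{-1}=S^{-1}\circ f$, together with $\Delta(\sigma)=\sigma\ot\sigma$ and $\delta(\sigma)=1$) yields precisely $c\ps{3}f^{-1}(c\ps{2})f(c\ps{4})\sigma\ot\delta(f^{-1}(c\ps{1})f(c\ps{5}))$. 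Thus the displayed hypothesis is exactly the equation $m\ns{0}m\ns{1}=m$, so it characterizes stability. I expect the main obstacle to be bookkeeping rather than conceptual: keeping the $H^{cop}$-coproduct conventions consistent throughout so that the five-fold coproduct of $c$ and the $\delta$-contraction land on the correct tensor factors, the anti Yetter-Drinfeld condition itself being delivered by the monoidal/module-category argument.
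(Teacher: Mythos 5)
Your proposal is correct and follows essentially the same route as the paper: the paper's proof is a one-line reference to the integral-side Proposition (the one for $A\ot{}^{\delta}k_{\sigma}$), whose argument you have dualized exactly — $C$ is a right-right Yetter--Drinfeld module over $H^{cop}$ via the cointegral coaction, ${}^{\delta}k_{\sigma}$ is a right-right anti Yetter--Drinfeld module over $H^{cop}$, and the tensor product is anti Yetter--Drinfeld by the module-category structure of AYD modules over YD modules. Your explicit identification of the structure maps and the unwinding of $m\ns{0}m\ns{1}=m$ for the stability condition (using $f^{-1}=S^{-1}\circ f$, the (anti)coalgebra map properties, and $\delta(\sigma)=1$) is in fact more detailed than what the paper records.
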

\begin{proof}
  The proof is similar to Proposition \ref{dual-yd to ayd}.
\end{proof}

\begin{corollary}
  Let $H$ be a cocommutative Hopf algebra over a field $k$ with a modular pair in involution $(\delta, \sigma)$.
  If $f\colon  C\longrightarrow H$ is a total cointegral which is a coalgebra map then $C\ot ^{\delta}k_{\sigma}$ is an right-right anti Yetter-Drinfeld module over $H$.
\end{corollary}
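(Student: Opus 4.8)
The plan is to deduce this corollary directly from Proposition \ref{dual-yd to ayd} by specializing to the cocommutative setting. The single observation that makes everything collapse is that for a cocommutative Hopf algebra $H$ one has $\Delta = \Delta^{cop}$, so that $H^{cop} = H$ as Hopf algebras; moreover the antipode of $H^{cop}$, which is $S^{-1}$, agrees with $S$ since cocommutativity forces $S^2 = \Id$. Hence a total cointegral $f\colon C \longrightarrow H$ is literally a total cointegral $f \colon C \longrightarrow H^{cop}$, and the coalgebra map hypothesis is unaffected by the identification.

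The one point I would check carefully is that the modular pair in involution $(\delta,\sigma)$ of $H$ is also a modular pair in involution of $H^{cop}$. Since $\delta$ is a character and $\sigma$ is group-like, these properties are insensitive to replacing $\Delta$ by $\Delta^{cop}$, and the normalization $\delta(\sigma)=1$ is identical. For the involution condition one computes $\widetilde{S}_{H^{cop}}(h) = \delta(h\ps{1}) S^{-1}(h\ps{2})$; using cocommutativity to symmetrize the coproduct together with $S^{-1}=S$, this coincides with $\widetilde{S}_H(h)$, so $\widetilde{S}^2(h)=\sigma h \sigma^{-1}$ holds verbatim over $H^{cop}$. Thus all the hypotheses of Proposition \ref{dual-yd to ayd} are met with $H^{cop}=H$, and its conclusion yields that $C\ot {}^{\delta}k_{\sigma}$ is a right-right anti Yetter-Drinfeld module over $H^{cop}$, with action $(c\ot 1_k)h = ch\ps{1}\ot \delta(h\ps{2})$ and coaction $c\ot 1_k \longmapsto c\ps{2}\ot 1_k \ot f^{-1}(c\ps{1})f(c\ps{3})\sigma$. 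Reading these formulas back under $H^{cop}=H$ gives precisely the asserted module structure over $H$.

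I do not anticipate a genuine obstacle here: the entire substance resides in Proposition \ref{dual-yd to ayd}, and the corollary is merely its translation across the identification $H^{cop}=H$. The only place where cocommutativity (equivalently $S^2=\Id$) is genuinely used is in transferring the modular pair in involution from $H$ to $H^{cop}$, and that verification is routine.
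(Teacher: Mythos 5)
Your proposal is correct and matches the paper's (implicit) argument: the corollary is stated there without proof precisely because it is the specialization of Proposition \ref{dual-yd to ayd} under the identification $H^{cop}=H$ for cocommutative $H$ (with $S^{-1}=S$ since $S^2=\Id$), which is exactly what you carry out. Your extra verification that the modular pair in involution transfers to $H^{cop}$ is a sound, if routine, addition.
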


\begin{remark}{\rm
Let $H^{op,cop}$ be a Hopf algebra over a field $k$ with a modular pair in involution $(\delta, \sigma)$ and
   $f\colon  C \longrightarrow H^{op, cop}$ be a total cointegral which is a coalgebra map. Then by \cite{staic} the right-left AYD module $C^D$    over $H^{op, cop}$ can be turned in to  a right-left YD module over $H^{op,cop}$ by twisting the action and coaction using $\delta$ and $\sigma$.

}

\end{remark}

Now we  replace the coalgebra map property of the total cointegral $f$ by a cleft Hopf
Galois coextension. For any Hopf Galois coextension $C(D)_H$, by \cite{hassanzadeh1} we set
\begin{equation}
  (C\Box_D C)_D= \frac{C\Co_D C}{W},
\end{equation}
where
\begin{equation}\label{ww}
  W= \langle   c\ot c'\ps{1} \varphi(\pi(c'\ps{2}))- c\ps{2}\ot c'\varphi(\pi(c\ps{1}))  \rangle,
\end{equation}
 and  $\varphi\in D^*$, $c\ot c'\in C\Co_D C$. We denote the elements of the quotient by an over line.  This quotient space   $(C\Co_D C)_D$  is a coassociative counital coalgebra \cite{hassanzadeh1} where the coproduct and counit maps are given by
\begin{equation}\label{coproduct-main}
  \Delta(\overline{c\ot c'})= \overline{c\ps{1}\Co_D c'\ps{2}}\ot \overline{c\ps{2}\Co_D  c'\ps{1}}. \qquad \ve(\overline{c\ot c'})=\ve(c)\ve(c').
\end{equation}
Since for any Hopf Galois coextension $C(D)_H$ we have $\pi(ch)=\varepsilon(h)\pi(c)$ then $(C\Box_D C)_D$ is a right $H$-module by the right action given by
\begin{equation}
  (c\ot c')h= c\ot c'h,~~~ c,c'\in C , h\in H.
\end{equation}
By \cite{hassanzadeh1} the map

\begin{equation}
  \kappa\colon =(\varepsilon\ot \Id_H)\circ {can}^{-1}\colon (C\Box_DC)_D\longrightarrow H
\end{equation}

 is an anti-coalgebra map which is equivalent to

 \begin{equation}
  \kappa(\overline{c\ot c'})\ps{1}\ot \kappa(\overline{c\ot c'})\ps{2}=\kappa(\overline{c\ps{2}\ot c'\ps{1}})\ot \kappa(\overline{c\ps{1}\ot c'\ps{2}}).
\end{equation}
 The following lemma enables us to find out that although the total
cointegrals of a cleft Hopf Galois coextension are not coalgebra maps in general, they have a close relation to
a coalgebra map.
 \begin{lemma}\label{inverse-co}
   Let $C(D)_H$ be a cleft Hopf Galois coextension with convolution invertible total cointegral $f\colon C\longrightarrow H$. Then
   \begin{equation}
     \kappa(c\ot c')=f^{-1}(c) f(c'), \quad ~~~~~~ c\ot c'\in (C\Box_D C)_D.
   \end{equation}
 \end{lemma}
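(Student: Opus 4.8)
The plan is to mirror the extension-case argument of Lemma~\ref{inverse-kappa}, where $\kappa$ was not computed directly but read off from an explicit formula for $can^{-1}$. So first I would pin down the inverse of the canonical bijection $can\colon C\ot H\longrightarrow C\Box_D C$, $c\ot h\mapsto c\ps{1}\ot c\ps{2}h$, and only then apply $\varepsilon\ot\Id_H$ to the first tensorand. Concretely I claim
\begin{equation*}
  can^{-1}(c\ot c')=c\ps{1}\ot f^{-1}(c\ps{2})f(c'),
\end{equation*}
the coextension analogue of the relation $can^{-1}(a\ot h)=af^{-1}(h\ps{1})\ot f(h\ps{2})$ that drove Lemma~\ref{inverse-kappa}.

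To justify this formula I would exploit that $can$ is a bijection, so it is enough to check that the displayed map $g(c\ot c'):=c\ps{1}\ot f^{-1}(c\ps{2})f(c')$ is a left inverse; composing on the right with $can^{-1}$ then forces $g=can^{-1}$. Using $can(c\ot h)=c\ps{1}\ot c\ps{2}h$ and splitting the first tensorand, coassociativity yields
\begin{equation*}
  g(can(c\ot h))=c\ps{1}\ot f^{-1}(c\ps{2})f(c\ps{3}h).
\end{equation*}
The computation then closes with exactly two facts about the total cointegral: its $H$-module property $f(c\ps{3}h)=f(c\ps{3})h$, and the convolution identity $f^{-1}\ast f=\varepsilon(\,\cdot\,)1_H$, which collapses $f^{-1}(c\ps{2})f(c\ps{3})$ to $\varepsilon(c\ps{2})1_H$ and leaves $c\ot h$. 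Hence $g\circ can=\Id_{C\ot H}$ and $g=can^{-1}$. Note that I invoke the convolution inverse on the left, i.e.\ that $f^{-1}$ is a two-sided convolution inverse, not merely the right inverse recorded in the definition of cleftness.

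Granting the $can^{-1}$ formula, the lemma is then one line: applying $\varepsilon\ot\Id_H$ gives
\begin{equation*}
  \kappa(c\ot c')=\varepsilon(c\ps{1})f^{-1}(c\ps{2})f(c')=f^{-1}(c)f(c').
\end{equation*}
I expect the only delicate point to be the verification of the $can^{-1}$ formula, specifically choosing which tensorand to split and in which order to apply $f^{-1}\ast f$. As a warning, the superficially symmetric candidate $c\ot c'\mapsto c'\ps{2}\ot f^{-1}(c)f(c'\ps{1})$ reproduces the very same value of $\kappa$ after applying $\varepsilon\ot\Id_H$, yet it is \emph{not} a one-sided inverse of $can$: feeding it $can(c\ot h)$ returns the $can$-type expression $ch\ps{2}\ot h\ps{1}$ rather than $c\ot h$. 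So careful Sweedler bookkeeping, rather than any conceptual difficulty, is where the work lies.
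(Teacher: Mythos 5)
Your proposal is correct and is essentially the paper's own proof: the paper likewise establishes the formula $can^{-1}(c\ot c')= c\ps{1}\ot f^{-1}(c\ps{2})f(c')$ by applying the candidate map to $can(\sum_i c_i\ot h_i)$ and collapsing with the $H$-linearity $f(c\ps{3}h)=f(c\ps{3})h$ and the convolution identity $f^{-1}\ast f=\varepsilon(\cdot)1_H$, then reads off $\kappa$ by applying $\varepsilon\ot\Id_H$. Your side remark about needing the two-sided convolution inverse is well taken---the paper's definition of cleftness only records $f\ast f^{-1}=\varepsilon(\cdot)1_H$, yet its computation uses $f^{-1}\ast f$ exactly as yours does---but this is a shared (and harmless) point, not a difference in approach.
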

 \begin{proof}
   It is enough to show that
   \begin{equation}\label{can-cointegral}
     can^{-1}(c\ot c')= c\ps{1}\ot  f^{-1}(c\ps{2})f(c').
   \end{equation}
   To prove this, we observe that since $can $ is bijective canonical map then there exists $\sum_ic_i\ot h_i\in C\ot H$ such that $can(\sum_ic_i\ot h_i)=c\ot c'$.
   Now, we have
   \begin{align*}
     &\sum_ican^{-1}(can(c_i\ot h_i))= \sum_ican^{-1}(c_i\ps{1}\ot c_i\ps{2}h_i)\\
     &=\sum_i c_i\ps{1}\ot  f^{-1}(c_i\ps{2})f(c_i\ps{3}h_i)=\sum_i c_i\ps{1}\ot  f^{-1}(c_i\ps{2})f(c_i\ps{3})h_i\\
     &=\sum_ic_i\ps{1}\ot \varepsilon(c_i\ps{2})h_i=\sum_ic_i\ot h_i.
   \end{align*}
   This shows that the equation \eqref{can-cointegral} is the correct formula for $can^{-1}$.
 \end{proof}
Let $C(D)_H$ be a cleft Hopf Galois coextension with a total cointegral $f\colon C\longrightarrow H$. The
previous lemma  and the anti coalgebra map property of $\kappa$ show that although the total cointegral $f$ is not a coalgebra map in general
it satisfies the following property;

\begin{equation}
  f^{-1}(c)\ps{1}f(c')\ps{1}\ot f^{-1}(c)\ps{2}f(c')\ps{2} = f^{-1}(c\ps{2})f(c'\ps{1})\ot f^{-1}(c\ps{1})f(c'\ps{2}).
\end{equation}

\begin{lemma}\label{lem-comod}
  Let $C(D)_H$ be a  Hopf Galois coextension. Then $C$ is a right $H$-comodule by the coaction  given by
  \begin{equation}
    c\longmapsto c\ps{2}\ot \kappa(c\ps{1}\ot c\ps{3}).
  \end{equation}
\end{lemma}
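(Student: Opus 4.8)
The plan is to verify directly that $\rho(c)=c\ps{2}\ot\kappa(\overline{c\ps{1}\ot c\ps{3}})$ satisfies the two right $H$-comodule axioms, using only the two structural features of $\kappa$ already recorded: its counitality $\ve_H(\kappa(\overline{c\ot c'}))=\ve(c)\ve(c')$, and its being an anti-coalgebra map, $\Delta_H\big(\kappa(\overline{c\ot c'})\big)=\kappa(\overline{c\ps{2}\ot c'\ps{1}})\ot\kappa(\overline{c\ps{1}\ot c'\ps{2}})$. This is exactly dual to the way $A$ and $A^B$ were made right $H$-modules through $ah=\kappa^1(h)a\kappa^2(h)$: there associativity came from the anti-algebra property of $\kappa$ and unitality from $\kappa$ being unital, while here coassociativity will come from the anti-coalgebra property and the counit axiom from counitality of $\kappa$.

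For the counit axiom I would first record that $\ve_H\circ\kappa=\ve_C\ot\ve_C$ on $(C\Box_D C)_D$. This is immediate from $\kappa=(\ve_C\ot\Id_H)\circ can^{-1}$ together with the identity $(\ve_C\ot\ve_C)\circ can=\ve_C\ot\ve_H$, which in turn holds because $C$ is an $H$-module coalgebra, so $\ve_C(ch)=\ve_C(c)\ve_H(h)$ and $can(c\ot h)=c\ps{1}\ot c\ps{2}h$. Then $(\Id_C\ot\ve_H)\rho(c)=c\ps{2}\,\ve(c\ps{1})\ve(c\ps{3})=c$, as required.

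For coassociativity, expand $(\Id_C\ot\Delta_H)\rho(c)=c\ps{2}\ot\Delta_H\big(\kappa(\overline{c\ps{1}\ot c\ps{3}})\big)$ and apply the anti-coalgebra property to the pair $(c\ps{1},c\ps{3})$; reindexing $c$ into five legs this gives $c\ps{3}\ot\kappa(\overline{c\ps{2}\ot c\ps{4}})\ot\kappa(\overline{c\ps{1}\ot c\ps{5}})$. On the other side $(\rho\ot\Id_H)\rho(c)=\rho(c\ps{2})\ot\kappa(\overline{c\ps{1}\ot c\ps{3}})$, and substituting $\rho(c\ps{2})$ produces the same five-legged expression, so the two composites coincide. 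As a sanity check, this is the general shadow of the cleft computation, where Lemma \ref{inverse-co} gives $\kappa(\overline{c\ps{1}\ot c\ps{3}})=f^{-1}(c\ps{1})f(c\ps{3})$ and the anti-coalgebra property specializes to the displayed convolution identity relating $f^{-1}$ and $f$.

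The genuinely delicate point, and the one I would treat as the main obstacle, is the well-definedness of the expression $\kappa(\overline{c\ps{1}\ot c\ps{3}})$: with the middle leg $c\ps{2}$ pulled out, the pair $c\ps{1}\ot c\ps{3}$ need not satisfy the cotensor condition for $C\Box_D C$ on the nose. The resolution is that $\kappa$ lives on the quotient $(C\Box_D C)_D$, whose very coproduct \eqref{coproduct-main} already crosses middle legs in the same fashion; as in \cite{hassanzadeh1}, it is precisely the relation $W$ together with the module-coalgebra identity $\pi(ch)=\ve(h)\pi(c)$ that renders such crossed expressions meaningful modulo $W$. Granting this one input, the two displays above show that $\rho$ is coassociative and counital, hence a coaction.
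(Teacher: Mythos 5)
Your proof is correct and follows essentially the same route as the paper: your coassociativity check via the anti-coalgebra map property of $\kappa$, reindexed into the five-legged expression $c\ps{3}\ot\kappa(\overline{c\ps{2}\ot c\ps{4}})\ot\kappa(\overline{c\ps{1}\ot c\ps{5}})$, is word-for-word the paper's computation (the paper calls it the ``coalgebra map property'' but uses the anti-coalgebra identity, just as you do). You in fact go slightly beyond the paper, whose proof consists only of the coassociativity display: your verification of the counit axiom via $\ve_H\circ\kappa=\ve_C\ot\ve_C$ and your flagging of the well-definedness of $\overline{c\ps{1}\ot c\ps{3}}$ as an element of $(C\Box_D C)_D$ (deferred, appropriately, to \cite{hassanzadeh1}) supply details the paper leaves implicit.
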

\begin{proof}
  The following computation shows the coassociativity of the coaction.
  \begin{align*}
    &c\ns{0}\ot c\ns{1}\ps{1}\ot c\ns{1}\ps{2}\\
    &=c\ps{2}\ot \kappa(c\ps{1}\ot c\ps{3})\ps{1}\ot \kappa(c\ps{1}\ot c\ps{3})\ps{2}\\
    &=c\ps{2}\ot \kappa(c\ps{1}\ps{2}\ot c\ps{3}\ps{1})\ot \kappa(c\ps{1}\ps{1}\ot c\ps{3}\ps{2})\\
    &=c\ps{3}\ot \kappa(c\ps{2}\ot c\ps{4})\ot \kappa(c\ps{1}\ot c\ps{5})\\
    &=c\ps{2}\ps{2}\ot \kappa(c\ps{2}\ps{1}\ot c\ps{2}\ps{3})\ot \kappa(c\ps{1}\ot c\ps{3})\\
    &=c\ns{0}\ns{0}\ot c\ns{0}\ns{1}\ot c\ns{1}.
  \end{align*}
  We used the coalgebra map property of $\kappa$ in the second equality.
\end{proof}

\begin{lemma}
  Let $C(D)_H$ be a cleft Hopf Galois coextension by the total cointegral $f\colon  C\longrightarrow H$. Then $C$ is a right $H$-comodule by the coaction  given by
  \begin{equation}
    c\longmapsto   c\ps{2}\ot f^{-1}(c\ps{1})f(c\ps{3}).
  \end{equation}
\end{lemma}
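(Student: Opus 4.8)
The plan is to derive this lemma immediately from the two preceding results, in exact analogy with how the corresponding right-right Yetter--Drinfeld action for a cleft Hopf Galois \emph{extension} was obtained from its structural lemma. First I would apply Lemma~\ref{lem-comod}, which already establishes — for an arbitrary Hopf Galois coextension $C(D)_H$, with no cleftness hypothesis yet used — that the assignment $c\mapsto c\ps{2}\ot\kappa(c\ps{1}\ot c\ps{3})$ is a coassociative, counital right $H$-coaction on $C$. Consequently every comodule axiom is already verified in that lemma, and nothing about the comodule structure itself remains to be checked.

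The only remaining step is a substitution. Because $C(D)_H$ is assumed cleft with convolution invertible total cointegral $f$, Lemma~\ref{inverse-co} evaluates $\kappa$ explicitly on $(C\Box_D C)_D$ as $\kappa(c\ot c')=f^{-1}(c)f(c')$. Inserting the argument $c\ps{1}\ot c\ps{3}$ that appears in the coaction of Lemma~\ref{lem-comod} yields $\kappa(c\ps{1}\ot c\ps{3})=f^{-1}(c\ps{1})f(c\ps{3})$, so the coaction rewrites as $c\mapsto c\ps{2}\ot f^{-1}(c\ps{1})f(c\ps{3})$, which is precisely the claimed formula.

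I expect no genuine obstacle. The single point worth a remark is that the argument $c\ps{1}\ot c\ps{3}$ must be a legitimate input for $\kappa$, i.e.\ it must represent an element of $(C\Box_D C)_D$; but this is exactly the object on which the coaction of Lemma~\ref{lem-comod} is already defined, so no new verification is needed. If one instead preferred a self-contained argument avoiding the citation chain, one could verify coassociativity directly by the same Sweedler computation as in Lemma~\ref{lem-comod}, now feeding in the identity $f^{-1}(c)\ps{1}f(c')\ps{1}\ot f^{-1}(c)\ps{2}f(c')\ps{2}=f^{-1}(c\ps{2})f(c'\ps{1})\ot f^{-1}(c\ps{1})f(c'\ps{2})$ displayed just before that lemma; this reproduces the coassociativity steps line for line with $\kappa$ replaced by $f^{-1}\cdot f$, and would constitute the only nontrivial computation in the whole argument.
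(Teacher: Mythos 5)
Your proposal is correct and coincides with the paper's own proof, which reads exactly ``This is the result of the Lemmas \ref{inverse-co} and \ref{lem-comod}'': the comodule axioms are inherited from Lemma \ref{lem-comod}, and cleftness enters only through the substitution $\kappa(c\ps{1}\ot c\ps{3})=f^{-1}(c\ps{1})f(c\ps{3})$ supplied by Lemma \ref{inverse-co}. Your added remarks (legitimacy of the argument of $\kappa$, and the optional self-contained coassociativity check) are sound but not needed beyond what the paper does.
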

\begin{proof}
  This is the result of the Lemmas \ref{inverse-co}  and \ref{lem-comod}.
\end{proof}
\begin{proposition}
   Let   $C(D)_H$ be a  Hopf Galois coextension.  Then $C$ is a right-right Yetter-Drinfeld module over $H$ by the original action of $H$ and the following  coaction
  \begin{equation}
   c\longmapsto   c\ps{2}\ot f^{-1}(c\ps{1})f(c\ps{3}).
  \end{equation}
\end{proposition}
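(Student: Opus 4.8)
The plan is to check the right-right Yetter-Drinfeld compatibility for the coaction furnished by Lemma \ref{lem-comod}, namely $\rho(c)=c\ps{2}\ot\kappa(c\ps{1}\ot c\ps{3})$, where $\kappa(c\ps{1}\ot c\ps{3})$ is read as $f^{-1}(c\ps{1})f(c\ps{3})$ in the cleft realization via Lemma \ref{inverse-co}. Coassociativity and counitality of $\rho$ are already recorded in Lemma \ref{lem-comod}, and the module structure is the original right $H$-action making $C$ an $H$-module coalgebra, so the only thing left is the mixed condition
\[
\rho(ch)=c\ns{0}h\ps{2}\ot S(h\ps{1})\,c\ns{1}h\ps{3}.
\]
The whole computation should run in exact parallel with the cleft case of Proposition \ref{YD-coextension}, and with the extension-side statement where relation \eqref{nice relation} was the decisive ingredient; here the pair $(f^{-1},f)$ is replaced by the single map $\kappa$.

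The key input I would isolate first is the coextension analogue of \eqref{nice relation}, recording how $\kappa$ transforms when its two arguments are acted on by $H$:
\[
c\ps{2}h\ps{2}\ot\kappa(c\ps{1}h\ps{1}\ot c\ps{3}h\ps{3})
  = c\ps{2}h\ps{2}\ot S(h\ps{1})\,\kappa(c\ps{1}\ot c\ps{3})\,h\ps{3}.
\]
The right factor $h\ps{3}$ is immediate from the right $H$-module structure $(c\ot c')h=c\ot c'h$ on $(C\Co_D C)_D$, which exhibits $\kappa$ as a right $H$-module map for right multiplication on $H$; the left factor $S(h\ps{1})$ is the precise dual of the term $S(h\ps{1})$ appearing in \eqref{nice relation}. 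In the cleft realization it collapses to the two identities $f^{-1}(ch)=S(h)f^{-1}(c)$ of \eqref{inverse-module} and $f(ch)=f(c)h$, exactly as used in Proposition \ref{YD-coextension}. I would state this relation as the dual of \eqref{nice relation} of \cite{JS}, or attribute it to \cite{hassanzadeh1}, since it is the honest carrier of the Galois input.

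Granting this relation, the verification is a short computation. Using that $C$ is a right $H$-module coalgebra, $\Delta(ch)=c\ps{1}h\ps{1}\ot c\ps{2}h\ps{2}$, and iterating, one has $(ch)\ps{1}\ot(ch)\ps{2}\ot(ch)\ps{3}=c\ps{1}h\ps{1}\ot c\ps{2}h\ps{2}\ot c\ps{3}h\ps{3}$, so that
\begin{align*}
  \rho(ch)
    &=(ch)\ps{2}\ot\kappa((ch)\ps{1}\ot(ch)\ps{3})\\
    &=c\ps{2}h\ps{2}\ot\kappa(c\ps{1}h\ps{1}\ot c\ps{3}h\ps{3})\\
    &=c\ps{2}h\ps{2}\ot S(h\ps{1})\,\kappa(c\ps{1}\ot c\ps{3})\,h\ps{3}\\
    &=c\ns{0}h\ps{2}\ot S(h\ps{1})\,c\ns{1}h\ps{3},
\end{align*}
which is precisely the right-right Yetter-Drinfeld condition; the third equality is the $\kappa$-relation above and the last resubstitutes $c\ns{0}=c\ps{2}$ and $c\ns{1}=\kappa(c\ps{1}\ot c\ps{3})$.

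The main obstacle is establishing the left-hand factor $S(h\ps{1})$ in the $\kappa$-relation. The action on $(C\Co_D C)_D$ moves $h$ only into the second tensor slot, whereas the relation controls $\kappa$ on $c\ps{1}h\ps{1}\ot c\ps{3}h\ps{3}$, in which $h$ has been distributed over both slots. Making this rigorous means unfolding $\kappa=(\varepsilon\ot\Id_H)\circ can^{-1}$, using that $(ch)\ps{1}\ot(ch)\ps{3}$ again lies in $C\Co_D C$ (by $\pi(ch)=\varepsilon(h)\pi(c)$, exactly as for $c\ps{1}\ot c\ps{3}$), and invoking the anti-coalgebra identity for $\kappa$ to transport the first-slot action across the balancing and produce $S(h\ps{1})$. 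In the cleft case this is transparent, since $\kappa(c\ot c')=f^{-1}(c)f(c')$ and \eqref{inverse-module} does the work directly; the content of the general statement is that the same transformation law survives with $\kappa$ in place of $f^{-1}\ast f$, and the index bookkeeping (carrying the spectator factor $c\ps{2}h\ps{2}$ along by coassociativity) is then routine.
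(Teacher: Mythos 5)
Your proof is correct and follows essentially the same route as the paper, whose entire proof is the one-line combination of Lemma \ref{inverse-co}, Lemma \ref{lem-comod}, and the computation of Proposition \ref{YD-coextension}; in the cleft realization your ``$\kappa$-relation'' is nothing but \eqref{inverse-module} together with $H$-linearity of $f$, which is exactly what that computation uses. The general (non-cleft) form of the $\kappa$-relation that you flag as the main obstacle is not actually needed here: since the stated coaction is written in terms of $f$ and $f^{-1}$, the proposition implicitly presupposes a cleft coextension (a convolution-invertible total cointegral), and in that setting your argument is complete.
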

\begin{proof}
  This is the result of Lemmas \ref{inverse-co}  and \ref{lem-comod} and the Proposition \ref{YD-coextension}.
\end{proof}
Here we introduce a \emph{stable} anti Yetter-Drinfeld module constructed by  total cointegrals of cleft Hopf Galois coextensions. The stability condition which is not followed immediately by the total cointegrals which are coalgebra maps, satisfies here for free.
\begin{proposition}
  Let  $C(D)_H$ be a cleft Hopf Galois coextension by the total cointegral $f\colon  C\longrightarrow H$. Then $C^D$ is a right-left stable anti Yetter-Drinfeld module by the coaction given by

  \begin{equation}
    c\longmapsto c\ps{2}\ot f^{-1}(c\ps{1})f( c\ps{3}).
  \end{equation}

\end{proposition}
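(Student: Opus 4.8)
The plan is to dualize the proof of the cleft-extension SAYD proposition, in which $A_B$ was shown to be a stable anti Yetter-Drinfeld module by first invoking the $\kappa$-level result of \cite{JS} and then substituting Lemma \ref{inverse-kappa}. Here the corresponding $\kappa$-level statement is the coextension analogue proved in \cite{hassanzadeh1}: for any Hopf Galois coextension $C(D)_H$, the subspace $C^D$ is a stable anti Yetter-Drinfeld module with the coaction of Lemma \ref{lem-comod}, namely $c\longmapsto c\ps{2}\ot\kappa(c\ps{1}\ot c\ps{3})$. Once this is in hand, the statement follows by replacing $\kappa$ with $f^{-1}$ and $f$ via Lemma \ref{inverse-co}, which gives $\kappa(c\ps{1}\ot c\ps{3})=f^{-1}(c\ps{1})f(c\ps{3})$ and hence the displayed coaction.

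First I would record that the coaction $c\longmapsto c\ps{2}\ot\kappa(c\ps{1}\ot c\ps{3})$ is coassociative and descends to $C^D$: coassociativity is exactly Lemma \ref{lem-comod}, and the restriction to $C^D$ is where the defining relation of $C^D$ is used. Next I would verify the anti Yetter-Drinfeld compatibility between the original right $H$-action on $C$ and this coaction; this is structurally the same Sweedler computation as in Proposition \ref{YD-coextension} with the Yetter-Drinfeld sign replaced by the anti Yetter-Drinfeld one, resting on the anti-coalgebra property of $\kappa$ displayed before Lemma \ref{inverse-co} together with the module-coalgebra structure of $C$, and I would either reproduce that short computation or cite \cite{hassanzadeh1}. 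Substituting Lemma \ref{inverse-co} at the end converts every occurrence of $\kappa$ into the product $f^{-1}(\,\cdot\,)f(\,\cdot\,)$, producing the claimed coaction in terms of the total cointegral.

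The hard part will be the stability, and it is precisely the feature the proposition is advertising. In the coalgebra-map proposition stability was \emph{not} automatic and had to be imposed as the hypothesis $c=c\ps{2}f^{-1}(c\ps{3})f(c\ps{1})$; here it must instead be extracted for free from the Galois condition. Concretely, stability amounts to $c\ps{2}\,\big(f^{-1}(c\ps{1})f(c\ps{3})\big)=c$ for $c\in C^D$, where $f^{-1}(c\ps{1})f(c\ps{3})\in H$ acts on $c\ps{2}$ by the original action. The key input is the bijectivity of $can$: starting from $can\circ can^{-1}=\Id$ and the formula $can^{-1}(c\ot c')=c\ps{1}\ot f^{-1}(c\ps{2})f(c')$ from the proof of Lemma \ref{inverse-co}, together with the total-cointegral identity $f(ch)=f(c)h$, one obtains the identity $c\ps{1}\ot c\ps{2}\big(f^{-1}(c\ps{3})f(c')\big)=c\ot c'$ in $C\Box_D C$; applying $\varepsilon$ in the correct factor and then using the defining relation of $C^D$ to move the middle tensorand into the straddling position yields the stability identity. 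My expectation is that the cleanest route, and the one matching the extension case, is to quote the stable $\kappa$-level structure from \cite{hassanzadeh1} wholesale and let Lemma \ref{inverse-co} perform the translation, rather than rederive stability by hand.
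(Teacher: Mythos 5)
Your proposal is correct and follows essentially the same route as the paper: the paper's proof simply cites \cite{hassanzadeh1} for the fact that $C^D$ is a right-left stable anti Yetter-Drinfeld module under the coaction $c\longmapsto c\ps{2}\ot \kappa(c\ps{1}\ot c\ps{3})$, and then applies Lemma \ref{inverse-co} to rewrite $\kappa$ as $f^{-1}(\,\cdot\,)f(\,\cdot\,)$. Your closing choice to quote the $\kappa$-level stable structure wholesale and let Lemma \ref{inverse-co} do the translation is exactly what the paper does; the extra hand-verification of coassociativity, the AYD condition, and stability is not needed (though it is a reasonable sanity check).
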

\begin{proof}
  By \cite{hassanzadeh1} the subspace  $C^D$ is a right-left \emph{stable} anti Yetter-Drinfeld module by the left $H$-coaction given by
  \begin{equation}
    c\longmapsto c\ps{2}\ot \kappa(c\ps{1}\ot c\ps{3}).
  \end{equation}
  Now the statement is the result of Lemma \ref{inverse-co}.
\end{proof}

Now we introduce different YD and AYD modules when the Hopf algebra of the Hopf Galois coextension has a modular pair in involution.
\begin{proposition}
  Let $H$ be a Hopf algebra over a field $k$ with a modular pair in involution $(\delta, \sigma)$, and $C(D)_{H^{cop}}$ be a cleft Hopf Galois coextension with a total cointegral $f\colon  C \longrightarrow H^{cop}$. Then $C\ot ^{\delta}k_{\sigma}$ is a right-right stable anti Yetter-Drinfeld module over $H^{cop}$ by the  action and coaction given by
  \begin{equation}
      (c\ot 1_k)h=ch\ps{1}\ot \delta(h\ps{2}), \quad ~~ c\ot 1_k\longmapsto c\ps{2} \ot 1_k \ot f^{-1}(c\ps{1})f(c\ps{3})\sigma.
  \end{equation}

\end{proposition}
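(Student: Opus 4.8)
The plan is to imitate the proof of Proposition \ref{dual-yd to ayd}, but to feed in the cleft version of the underlying Yetter--Drinfeld module so that stability is produced by the Hopf Galois coextension instead of being imposed by hand. First I would invoke the earlier result that, for a Hopf Galois coextension $C(D)_H$, the space $C$ is a right-right Yetter--Drinfeld module over $H$ under the original action and the coaction $c\mapsto c\ps{2}\ot f^{-1}(c\ps{1})f(c\ps{3})$; applied to $C(D)_{H^{cop}}$ this makes $C$ a right-right YD module over $H^{cop}$. By Lemma \ref{inverse-co} the element $f^{-1}(c\ps{1})f(c\ps{3})$ equals $\kappa(\overline{c\ps{1}\ot c\ps{3}})$, so this coaction is exactly the one supplied by the translation map $\kappa$, and the anti-coalgebra property of $\kappa$ is what makes it coassociative.

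Next I would supply the stable piece. By \cite{hkrs1} the modular pair in involution $(\delta,\sigma)$ makes ${}^{\delta}k_{\sigma}$ a right-left stable anti Yetter--Drinfeld module over $H$ with $1\cdot h=\delta(h)$ and $1\mapsto\sigma\ot1$; since $H^{cop}$ has antipode $S^{-1}$, twisting the left coaction into a right coaction turns ${}^{\delta}k_{\sigma}$ into a right-right stable anti Yetter--Drinfeld module over $H^{cop}$, exactly as in Proposition \ref{yd to ayd}. I would then apply the fact that anti Yetter--Drinfeld modules form a module category over the monoidal category of Yetter--Drinfeld modules: the tensor product of the right-right YD module $C$ with the right-right AYD module ${}^{\delta}k_{\sigma}$ is a right-right AYD module $C\ot{}^{\delta}k_{\sigma}$ over $H^{cop}$. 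Reading off the diagonal action and the multiplied coaction recovers the stated formulas $(c\ot1_k)h=ch\ps{1}\ot\delta(h\ps{2})$ and $c\ot1_k\mapsto c\ps{2}\ot1_k\ot f^{-1}(c\ps{1})f(c\ps{3})\sigma$.

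The genuinely new content is stability, and here I would argue that cleftness gives it for free, in contrast with Proposition \ref{dual-yd to ayd} where a separate identity had to be assumed. I would expand the stability equation $(c\ot1_k)\ns{0}(c\ot1_k)\ns{1}=c\ot1_k$ into $c\ps{2}\,h\ps{1}\ot\delta(h\ps{2})$ with $h=f^{-1}(c\ps{1})f(c\ps{3})\sigma$, rewrite everything through $\kappa$ using Lemma \ref{inverse-co}, and then collapse it using the stability of $C^D$ established in the preceding stable-AYD proposition for cleft coextensions (which rests on \cite{hassanzadeh1}), together with $\Delta(\sigma)=\sigma\ot\sigma$, $\delta(\sigma)=1$, and the involution axiom $\widetilde{S}^2(h)=\sigma h\sigma^{-1}$. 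Equivalently, one may transport the stability of $C^D$ to $C\ot{}^{\delta}k_{\sigma}$ through the functorial equivalences of \cite{staic}, which preserve the stability condition.

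I expect this last step to be the main obstacle. The conceptual monoidal argument of the first two paragraphs is routine, but the stability computation is delicate for three intertwined reasons: the coaction and the diagonal action are taken over $H^{cop}$, so the iterated coproducts appear in reversed order; the grouplike $\sigma$ must be threaded through the convolution $f^{-1}\ast f$ without the luxury of $f$ being multiplicative, using only the cleft relation of Lemma \ref{inverse-co}; and the character $\delta$ together with the involution axiom must be used precisely to absorb the leftover leg and recover $c\ot1_k$ on the nose. Keeping these three bookkeeping devices aligned is where the real care is needed.
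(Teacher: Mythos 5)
Your first two paragraphs are correct and reproduce what the paper evidently intends: the paper states this proposition with no proof at all, and the construction of the AYD structure is exactly the combination you describe — Lemma \ref{inverse-co} plus the comodule lemma and the Yetter--Drinfeld proposition for cleft coextensions make $C$ a right-right YD module over $H^{cop}$ with coaction $c\mapsto c\ps{2}\ot f^{-1}(c\ps{1})f(c\ps{3})$, the modular pair makes ${}^{\delta}k_{\sigma}$ a right-right AYD module over $H^{cop}$, and the monoidal fact (YD tensor AYD is AYD) produces the displayed action and coaction, just as in Propositions \ref{yd to ayd} and \ref{dual-yd to ayd}. Up to that point there is nothing to object to.

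The genuine gap is in your stability argument, which is the only part of the statement that distinguishes it from Proposition \ref{dual-yd to ayd}. The stable AYD module that cleftness provides (via \cite{hassanzadeh1}) is $C^D$, which is in general a \emph{proper subspace} of $C$: its stability identity $c\ps{2}\cdot\bigl(f^{-1}(c\ps{1})f(c\ps{3})\bigr)=c$ is asserted only for elements satisfying the defining symmetry condition $c\ps{1}\varphi(\pi(c\ps{2}))=c\ps{2}\varphi(\pi(c\ps{1}))$ for all $\varphi\in D^*$. The module in the present proposition has underlying space all of $C$ (since $C\ot k\cong C$), so stability must be checked for \emph{every} $c\in C$; for $c\notin C^D$ the stability of $C^D$ gives you nothing, and no bookkeeping with $\Delta(\sigma)=\sigma\ot\sigma$, $\delta(\sigma)=1$ and $\widetilde{S}^2(h)=\sigma h\sigma^{-1}$ can manufacture the missing symmetry of the coproduct legs. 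Your fallback — transporting stability through the equivalences of \cite{staic} — fails for the same reason: those functors twist the action and coaction on a \emph{fixed} underlying space (converting AYD over $H^{op,cop}$ into YD), they do not relate the two different spaces $C^D$ and $C\ot{}^{\delta}k_{\sigma}$, and no preservation of stability under them is established in the paper or in \cite{staic}. Note finally that tensoring a YD module with a \emph{stable} AYD module does not yield a stable module in general — that is precisely why Proposition \ref{dual-yd to ayd} must impose its displayed stability identity as an extra hypothesis even though ${}^{\delta}k_{\sigma}$ itself is stable. So your proposal establishes the AYD structure but not the stability claim; if stability does hold here, it needs an argument that genuinely uses bijectivity of the canonical map on all of $C$ rather than a reduction to the $C^D$ result.
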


\begin{corollary}
  Let $H$ be a cocommutative Hopf algebra over a field $k$ with a modular pair in involution $(\delta, \sigma)$, and $C(D)_H$ be a cleft Hopf Galois coextension with a total cointegral $f\colon  C \longrightarrow H$. Then $C\ot ^{\delta}k_{\sigma}$ is an right-right stable anti Yetter-Drinfeld module over $H$ by the  action and coaction given in the previous proposition.

\end{corollary}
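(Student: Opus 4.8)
The plan is to deduce this corollary directly from the preceding proposition by specializing to the cocommutative case. The preceding proposition establishes that for a Hopf algebra $H$ with modular pair in involution $(\delta,\sigma)$ and a cleft Hopf Galois coextension $C(D)_{H^{cop}}$ with total cointegral $f\colon C\longrightarrow H^{cop}$, the space $C\ot {}^{\delta}k_{\sigma}$ is a right-right stable anti Yetter-Drinfeld module over $H^{cop}$ with the stated action and coaction. The key observation driving the corollary is that when $H$ is cocommutative, the coproduct is symmetric, so $\Delta=tw\circ\Delta$ and consequently $H^{cop}=H$ as Hopf algebras. Therefore the hypotheses of the preceding proposition are automatically met by taking the very same $H$ in place of $H^{cop}$, and the total cointegral $f\colon C\longrightarrow H$ is simultaneously a total cointegral into $H^{cop}$.

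First I would note that cocommutativity of $H$ forces the antipode to satisfy $S^2=\Id$, which is compatible with the existence of a modular pair in involution and guarantees that the inverse cointegral $f^{-1}$ behaves as required in the earlier lemmas. Second, I would invoke the identification $H^{cop}=H$ to transport the module and comodule structures: the right action $(c\ot 1_k)h=ch\ps{1}\ot\delta(h\ps{2})$ and the coaction $c\ot 1_k\longmapsto c\ps{2}\ot 1_k\ot f^{-1}(c\ps{1})f(c\ps{3})\sigma$ are literally the same formulas, now read over $H$ rather than $H^{cop}$, since the relevant comultiplications coincide. Third, the stable anti Yetter-Drinfeld property, together with the stability identity, is inherited verbatim from the preceding proposition, because every structural verification there only uses the coproduct of $H^{cop}$, which equals that of $H$ under cocommutativity.

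The main point to be careful about, rather than a genuine obstacle, is ensuring that the right-right AYD compatibility condition stated in the third displayed equation of Section 3 is unaffected by the passage from $H^{cop}$ to $H$. Since that compatibility involves $S^{-1}$ and the coproduct in a fixed pattern, and since $S^{-1}=S$ and $\Delta^{cop}=\Delta$ in the cocommutative setting, the compatibility relation for $H^{cop}$ collapses precisely to the corresponding relation for $H$. Thus no separate computation is needed; the entire content of the corollary is the remark that cocommutativity makes $H^{cop}$ and $H$ coincide, so that the preceding proposition applies directly. I would therefore write the proof as a single sentence: since $H$ is cocommutative we have $H^{cop}=H$, and the claim follows immediately from the preceding proposition applied with this identification.
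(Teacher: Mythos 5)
Your proposal is correct and matches the paper's intended argument: the paper states this corollary with no separate proof, the implicit reasoning being precisely your observation that cocommutativity gives $H^{cop}=H$ as Hopf algebras (with $S^{-1}=S$ since $S^2=\Id$), so the preceding proposition applies verbatim with the same action and coaction. Your additional checks---that the module-coalgebra structure, cleftness, and the AYD/stability conditions are unaffected by the identification---are sound and simply make explicit what the paper leaves tacit.
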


One notes that again by \cite{staic} the right-left anti  Yetter-Drinfeld module $C^D$    over $H^{op,cop}$ can be turned in to  a right-left Yetter-Drinfeld module over $H^{op,cop}$.\\

If $H$ is a cocommutative Hopf algebra   then the original  action of $H$ over $C$ turns $(C\Box_D C)_D$ in to a right $H$-module coalgebra.
By \cite[Lemma 3.5]{hassanzadeh1} the map $\kappa$ is a right $H$-module map and therefore it is a total cointegral.
 Therefore we obtain the following lemma.

\begin{lemma}\label{lemmakappa}
  Let $H$ be a cocommutative Hopf algebra and $C(D)_H$ be a Hopf Galois coextension. Then  the space $(C\Box_D C)_D$ is a right $H$-module coalgebra and  the map $\kappa: (C\Box_D C)_D\longrightarrow H$ is a convolution invertible total cointegral.
\end{lemma}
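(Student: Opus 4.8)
The plan is to derive both assertions from structures that are already in place, isolating the single point where cocommutativity of $H$ is essential. For the first assertion I would take as given the coalgebra structure on $(C\Box_D C)_D$ recorded in \eqref{coproduct-main} together with the right $H$-action $(c\ot c')h = c\ot c'h$, which descends to the quotient because $\pi(ch)=\varepsilon(h)\pi(c)$ for a Hopf Galois coextension. What remains is to verify that $\Delta$ and $\varepsilon$ are right $H$-module maps, the tensor square $(C\Box_D C)_D\ot (C\Box_D C)_D$ carrying the diagonal action. Compatibility with $\varepsilon$ is immediate from $\varepsilon(\overline{c\ot c'}h)=\varepsilon(c)\varepsilon(c'h)=\varepsilon(c)\varepsilon(c')\varepsilon(h)$.

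The coproduct compatibility is the crux and, I expect, the only genuinely non-formal step. Using that $C$ is itself an $H$-module coalgebra one finds $\Delta(\overline{c\ot c'h})=\overline{c\ps{1}\Co_D c'\ps{2}h\ps{2}}\ot\overline{c\ps{2}\Co_D c'\ps{1}h\ps{1}}$, whereas applying the diagonal action to $\Delta(\overline{c\ot c'})$ produces $\overline{c\ps{1}\Co_D c'\ps{2}h\ps{1}}\ot\overline{c\ps{2}\Co_D c'\ps{1}h\ps{2}}$. These two expressions differ only by the interchange $h\ps{1}\leftrightarrow h\ps{2}$, so they agree precisely because $H$ is cocommutative. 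This establishes that $(C\Box_D C)_D$ is a right $H$-module coalgebra, and this interchange is exactly where the hypothesis on $H$ is consumed, so I regard it as the main obstacle.

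For the second assertion I would show that $\kappa$ is a total cointegral and then invoke the invertibility criterion from the Preliminaries. That $\kappa$ is a right $H$-module map is \cite[Lemma 3.5]{hassanzadeh1}. For counitality I would work with a representative: writing $can^{-1}(c\ot c')=\sum_i c_i\ot h_i$ so that $\kappa(\overline{c\ot c'})=\sum_i\varepsilon(c_i)h_i$, one gets $\varepsilon(\kappa(\overline{c\ot c'}))=\sum_i\varepsilon(c_i)\varepsilon(h_i)$; since $can(\sum_i c_i\ot h_i)=c\ot c'$ and $\varepsilon\circ can=\varepsilon\ot\varepsilon$ (because $can(c\ot h)=c\ps{1}\ot c\ps{2}h$), this sum equals $\varepsilon(c)\varepsilon(c')=\varepsilon(\overline{c\ot c'})$. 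Hence $\kappa$ is a counital right $H$-module map, that is, a total cointegral.

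Finally, convolution invertibility is free. The map $\kappa$ is an anti-coalgebra map, as recorded just before the lemma, and cocommutativity of $H$ forces $S^2=\Id$. By the criterion recalled in the Preliminaries, any total cointegral that is an anti-coalgebra map is convolution invertible whenever $S^2=\Id$; applied to $\kappa$ this completes the proof. Apart from the coproduct computation, every ingredient is either cited or an immediate consequence of the definitions of $can$, $\Delta$ and $\varepsilon$.
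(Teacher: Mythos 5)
Your proposal is correct and follows essentially the same route as the paper: cocommutativity of $H$ yields the right $H$-module coalgebra structure on $(C\Box_D C)_D$, \cite[Lemma 3.5]{hassanzadeh1} gives that $\kappa$ is a right $H$-module map, and convolution invertibility comes from the anti-coalgebra map property of $\kappa$ together with the criterion from the Preliminaries (valid since $S^2=\Id$ for cocommutative $H$). The paper states these steps without verification, whereas you correctly carry out the two computations it omits --- the coproduct compatibility (where the swap in \eqref{coproduct-main} makes cocommutativity exactly the needed hypothesis) and the counitality of $\kappa$ --- so your write-up is a faithful, fleshed-out version of the paper's argument.
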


Also the Galois map
\begin{equation}
  can\colon  (C\Box_DC)_D \ot H\longrightarrow (C\Box_DC)_D \Box (C\Box_DC)_D,
\end{equation}
given by
\begin{equation}
  (c\ot c')\ot h\longmapsto c\ps{1}\ot c'\ps{2}\ot c\ps{2}\ot c'\ps{1}h,
\end{equation}
is injective. This is because $can(c\ot 1_H)$ and $can'\colon  c'\ot h\longmapsto c'\ps{1}h\ot c'\ps{2}$ \cite[Lemma 4.1]{canapel} are both injective. This  map is not necessarily surjective in general. Therefore  we have the following proposition.
\begin{proposition}
 Let $H$ be a cocommutative Hopf algebra  and $C(D)_H$ be a Hopf Galois coextension. Then the following Galois map is injective.
 \begin{equation}
  can\colon  (C\Box_DC)_D \ot H\longrightarrow (C\Box_DC)_D \Box (C\Box_DC)_D.
\end{equation}

\end{proposition}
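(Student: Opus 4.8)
The plan is to read the displayed map as the canonical Galois map of the coalgebra $(C\Box_D C)_D$, whose coproduct is \eqref{coproduct-main} and whose right $H$-action is $(c\ot c')h=c\ot c'h$, and then to prove injectivity by separating its two legs. On representatives the map is $can(\overline{c\ot c'}\ot h)=\overline{c\ps{1}\ot c'\ps{2}}\ot\overline{c\ps{2}\ot c'\ps{1}h}$, so the unprimed entry $c$ appears only through $c\ps{1}\ot c\ps{2}$ (tensor slots one and three) while the pair $(c',h)$ appears only through $c'\ps{2}\ot c'\ps{1}h$ (tensor slots two and four).

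First I would isolate these two legs at the level of representatives in $C\ot C\ot H$. The unprimed leg is governed by the comultiplication $c\mapsto c\ps{1}\ot c\ps{2}$, which is injective for any counital coalgebra since $\Id\ot\ve$ is a left inverse. The primed leg together with $H$ is governed by $can'\colon c'\ot h\mapsto c'\ps{1}h\ot c'\ps{2}$, which is injective by \cite[Lemma 4.1]{canapel}; its two outputs are simply routed to slots four and two. After a bijective permutation of the four tensor factors, $can$ is therefore the external tensor product of these two maps, and an external tensor product of injective linear maps is injective. Since $H$ is cocommutative, Lemma \ref{lemmakappa} moreover supplies a convolution invertible total cointegral $\kappa$ with $\kappa(c\ot c')=f^{-1}(c)f(c')$ by Lemma \ref{inverse-co}, which I would use to write down an explicit left inverse of $can$ as a fallback if the reshuffling argument proves awkward to formalize.

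The step I expect to be the main obstacle is the descent of this factorization through the cotensor product $\Box_D$ and the quotient by $W$ that define the source $(C\Box_D C)_D\ot H$ and the target $(C\Box_D C)_D\Box(C\Box_D C)_D$: injectivity between ambient spaces does not automatically survive passage to a subquotient. To close this gap I would take a kernel element, lift it to a representative in $C\ot C\ot H$, and show that injectivity of the comultiplication and of $can'$ forces the representative into the span of the relations \eqref{ww} generating $W$ (together with the cotensor relations over $D$), so that its class already vanishes in $(C\Box_D C)_D\ot H$. Finally I would note that this argument gives injectivity only: once restricted to the cotensor subspaces the external tensor product of the two legs need not be surjective, which is consistent with the accompanying remark that the map need not be surjective in general.
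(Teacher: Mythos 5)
Your proposal is correct to the same depth as the paper's own argument and takes essentially the same route: the paper also proves injectivity by splitting $can$ into a comultiplication leg (it phrases this as injectivity of $can(\,\cdot\,\ot 1_H)$, the coproduct of $(C\Box_D C)_D$, where you use $\Delta_C$ on the unprimed factor) tensored with the leg $can'\colon c'\ot h\mapsto c'\ps{1}h\ot c'\ps{2}$, whose injectivity it likewise cites from \cite[Lemma 4.1]{canapel}. If anything you are more careful than the paper, which combines the two injectivities in one line without addressing the passage to the subquotient $(C\Box_D C)_D$ that you correctly single out as the delicate step.
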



  \section{Examples}

In this section we introduce some examples of our results in the previous sections. Specially we apply them  to the  Connes-Moscovici Hopf algebra, universal enveloping algebras of  a Lie algebra and  polynomial algebras. We use our results  to classify (cleft) Hopf Galois (co)extensions and  total (co)integrals.

\begin{proposition}
 Let $H_n$ be the Connes-Moscovici Hopf algebra and $A$ be a finite dimensional comodule algebra over $H_n$ with non-trivial coaction. Then there is no total integral $f\colon  H_n\longrightarrow A$ which is an algebra map.
\end{proposition}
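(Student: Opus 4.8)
The plan is to play the finite dimensionality of $A$ against the fact that $H_n$ is generated by an infinite, linearly independent family of \emph{almost primitive} elements. Assume for contradiction that a total integral $f\colon H_n\longrightarrow A$ which is an algebra map exists. The only structural input I need about $H_n$ is its coalgebra: besides the grading elements and the vector fields, $H_n$ carries infinitely many linearly independent generators $\delta$ (in codimension one these are $\delta_1,\delta_2,\dots$ with $\delta_{m+1}=[X,\delta_m]$, and similarly in higher codimension), each of which is almost primitive,
\[ \Delta(\delta)=\delta\ot 1+1\ot\delta+(\mbox{monomials in strictly lower }\delta\mbox{'s}). \]
In particular $\ve(\delta)=0$ and the summand $1\ot\delta$ really occurs, with $\delta$ linearly independent from $1$ and from every lower monomial in the PBW basis.

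First I would record that a finite dimensional comodule has a finite dimensional coefficient coalgebra: fixing a basis $a_1,\dots,a_m$ of $A$ and writing $\rho(a_i)=\sum_j a_j\ot c_{ij}$, coassociativity of $\rho$ gives $\Delta c_{ij}=\sum_k c_{ik}\ot c_{kj}$, so $C:=\mathrm{span}\{c_{ij}\}\sbs H_n$ is a finite dimensional subcoalgebra with $\rho(A)\sbs A\ot C$; moreover $\rho(1_A)=1_A\ot 1$ shows $1\in C$. Next I would feed a generator $\delta$ into the defining identity of a total integral, $f(h\ps{1})\ot h\ps{2}=f(h)\ns{0}\ot f(h)\ns{1}$. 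Using $f(1)=1$ and the almost primitivity of $\delta$ this reads
\[ \rho(f(\delta))=f(\delta)\ot 1+1_A\ot\delta+f(\mbox{lower})\ot(\mbox{lower }\delta\mbox{'s}), \]
and the left-hand side lies in $A\ot C$. Expanding in a basis of $H_n$ that extends one of $C$, the coefficient of the single generator $\delta$ on the right is exactly $1_A\ne 0$, while $\delta$ does not occur among $1$ or the lower monomials; hence the $H_n$-support of $\rho(f(\delta))$ contains $\delta$, forcing $\delta\in C$. Letting $\delta$ range over the whole infinite independent family puts infinitely many independent vectors into the fixed finite dimensional space $C$, which is impossible. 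This contradiction rules out $f$.

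The step I expect to be delicate is the structural one: one must know precisely that $H_n$ contains infinitely many linearly independent generators whose coproduct has the almost primitive shape above, so that the $1\ot\delta$ term is genuinely present and is never cancelled by the lower-monomial terms; this is a statement about the coradical filtration of the Connes--Moscovici Hopf algebra and should be quoted from its construction. The representation-theoretic machinery of Section 3 is not strictly required here, although the same contradiction can be packaged through Proposition \ref{YD-extension} and the number operator $Y$: since $[Y,\delta]$ rescales $\delta$ by its weight while the induced $Y$-action $a\cdot Y=af(Y)-f(Y)a$ on the finite dimensional $A$ has only finitely many eigenvalues, the coaction can transport only finitely many $Y$-weights, so only finitely many $\delta$'s can enter $C$ --- again a contradiction. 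The non-triviality of the coaction is convenient for excluding degenerate situations, but the essential obstruction is the interplay between finite dimensionality and the infinitely many $\delta$-generators and does not otherwise depend on it.
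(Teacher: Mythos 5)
Your proposal is essentially correct, but it takes a genuinely different route from the paper. The paper's proof is two lines: by Proposition \ref{YD-extension} the total integral (being an algebra map) makes $A$ a right-right Yetter--Drinfeld module over $H_n$ with the action $ah=f^{-1}(h\ps{1})af(h\ps{2})$ and its original non-trivial coaction, and this contradicts the classification of \cite{RS}, by which every finite dimensional YD module over $H_n$ has trivial coaction. So the paper trades all the work for one deep external citation, and it genuinely uses both hypotheses: the algebra map property (to build the YD action) and the non-triviality of the coaction (to contradict \cite{RS}). Your argument is self-contained: it uses only the finite dimensional coefficient coalgebra $C$ of the comodule $A$, the unitality $f(1)=1_A$, the comodule-map identity $\rho(f(h))=f(h\ps{1})\ot h\ps{2}$, and the filtered coproducts of the infinitely many independent $\delta$-generators of $H_n$. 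In particular you never use that $f$ is an algebra map, nor that the coaction is non-trivial, so a completed version of your argument proves the stronger statement that no total integral $H_n\longrightarrow A$ exists at all when $A$ is finite dimensional --- something the paper's method cannot reach, since it needs both extra hypotheses.

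There is, however, one step that is false as stated and needs repair: from
\[
\rho(f(\delta))=f(\delta)\ot 1+1_A\ot\delta+\sum_i f(x_i)\ot y_i \;\in\; A\ot C
\]
you cannot conclude $\delta\in C$; the $H_n$-support of this tensor need not contain $\delta$ itself. For instance for $\delta_2$, whose lower term is $\delta_1\ot\delta_1$, if $f(\delta_1)=\lambda 1_A$ then $1_A\ot\delta_2+f(\delta_1)\ot\delta_1=1_A\ot(\delta_2+\lambda\delta_1)$, and nothing forces $\delta_2$, rather than $\delta_2+\lambda\delta_1$, to lie in $C$. What the functional argument actually yields (pair the identity with $\phi\in H_n^*$ vanishing on $C$ and on $\mathrm{span}\{1,y_i\}$, so that a nonzero $\phi(\delta)$ would give $0=1_A$) is the weaker containment $\delta\in C+\mathrm{span}\{1,y_i\}$: each $\delta_m$ agrees with some $c_m\in C$ modulo the span of $1$ and monomials in $\delta_1,\dots,\delta_{m-1}$. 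This is still enough to finish: by the PBW-type linear independence of the $\delta$'s and their monomials, a vanishing linear combination $\sum_{m\le M}\lambda_m c_m=0$ with $\lambda_M\ne 0$ would place $\lambda_M\delta_M$ in the span of $1$ and monomials in $\delta_1,\dots,\delta_{M-1}$, which is impossible; hence the $c_m$ are linearly independent and $C$ is infinite dimensional, the desired contradiction. You partially flagged this cancellation issue yourself, and the patch is routine triangularity, so the strategy does succeed; the secondary packaging you sketch via Proposition \ref{YD-extension} and $Y$-weights is considerably less precise and is not needed.
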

\begin{proof}
By the  results of the previous section $A$ is an YD module by the action $ah=f^{-1}(h\ps{1})af(h\ps{2})$ and the original non-trivial coaction.
  But this is in contradiction with the fact that  any finite dimensional YD module over the Connes-Moscovici Hopf algebra has trivial coaction \cite{RS}.
\end{proof}

Dually we have the following proposition.
\begin{proposition}
  Let $H_n$ be the Connes-Moscovici Hopf algebra and $C$ be a finite dimensional module coalgebra over $H_n$ with non-trivial action. Then there is no total cointegral $f\colon   C\longrightarrow H_n$ which is a coalgebra map.
\end{proposition}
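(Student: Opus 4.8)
The plan is to mirror the proof of the preceding proposition, exploiting the Yetter--Drinfeld module attached to a coalgebra-map total cointegral. Assume for contradiction that a total cointegral $f\colon C\longrightarrow H_n$ exists which is a coalgebra map. First I would apply Proposition \ref{YD-coextension}: it endows $C$ with a right-right Yetter--Drinfeld module structure over $H_n$ whose action is the original (by hypothesis non-trivial) action on the module coalgebra $C$, and whose coaction is $c\longmapsto c\ps{2}\ot f^{-1}(c\ps{1})f(c\ps{3})$. Since $C$ is finite dimensional by assumption, this produces a finite dimensional Yetter--Drinfeld module over the Connes--Moscovici Hopf algebra $H_n$.

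Next I would invoke the structural result of \cite{RS} on finite dimensional Yetter--Drinfeld modules over $H_n$. Where the previous proposition used the consequence that such a module must carry the trivial coaction, here the dual consequence, namely that it must carry the trivial action, is what produces the contradiction, since the action on $C$ is assumed non-trivial. This immediately rules out the existence of $f$, which is exactly the assertion.

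The hard part will be securing the precise form of the \cite{RS} input. The previous proposition cites \cite{RS} for triviality of the coaction, whereas the dual statement needs triviality of the action, and these are genuinely different assertions because $H_n$ is neither commutative nor cocommutative. I would therefore either cite the version of the \cite{RS} classification that constrains the action directly (obtained through the $H\mapsto H^{op,cop}$ symmetry of the Yetter--Drinfeld category, or through the duality between module coalgebras over $H_n$ and comodule algebras over its dual), or, if only triviality of the coaction is available, add the short supplementary step showing that for the non-cocommutative $H_n$ the Yetter--Drinfeld compatibility between a trivial coaction and the given action forces the action itself to be trivial. Pinning down which triviality \cite{RS} supplies, and confirming that the non-cocommutativity of $H_n$ is what drives the final contradiction, is the only real subtlety; the rest is a direct transcription of the dual argument.
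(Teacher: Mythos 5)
Your proposal matches the paper's proof exactly: the paper assumes such an $f$ exists, applies the Yetter--Drinfeld structure of Proposition \ref{YD-coextension} (original action, coaction $c\mapsto c\ps{2}\ot f^{-1}(c\ps{1})f(c\ps{3})$) to get a finite dimensional YD module over $H_n$ with non-trivial action, and cites \cite{RS} directly for the statement that any finite dimensional YD module over the Connes--Moscovici Hopf algebra has trivial action, giving the contradiction. The subtlety you flag about which triviality \cite{RS} actually supplies is real but is not addressed in the paper, which simply asserts the action-triviality version with the same citation it used for coaction-triviality in the preceding proposition.
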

\begin{proof}
  By the  results of the previous section $C$ is an YD module by the  the original non-trivial action.
  But this is in contradiction with the fact that  any finite dimensional YD module over the Connes-Moscovici Hopf algebra has trivial action \cite{RS}.
\end{proof}

\begin{proposition}
 Let $A$ be a finite dimensional comodule algebra over the Connes-Moscovici Hopf algebra $H_n$. Then there is no  right  Hopf Galois extension $A(B)^{H_n}$.
\end{proposition}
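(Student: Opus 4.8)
The plan is to argue by contradiction using a crude but decisive dimension count, exploiting that the Connes-Moscovici Hopf algebra $H_n$ is infinite dimensional as a $k$-vector space while $A$ is assumed finite dimensional. First I would recall that, by definition, an extension $A(B)^{H_n}$ is (right) Hopf Galois precisely when the canonical map
\[
can\colon A\ot_B A\longrightarrow A\ot H_n,\qquad a\ot a'\longmapsto aa'\ns{0}\ot a'\ns{1},
\]
is bijective. So it suffices to show that this map can never be bijective under our hypotheses.

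Now I would compare the dimensions of the two sides. Since $B\subseteq A$ is a subalgebra, the space $A\ot_B A$ is a quotient of $A\ot A$ over the ground field, hence $\dim_k(A\ot_B A)\le (\dim_k A)^2<\infty$. On the other hand $A$ is a unital comodule algebra, so $A\neq 0$, and $H_n$ is infinite dimensional; therefore $\dim_k(A\ot H_n)=\dim_k(A)\cdot\dim_k(H_n)=\infty$. A $k$-linear bijection between a finite dimensional and an infinite dimensional space is impossible, so $can$ cannot be bijective and no right Hopf Galois extension $A(B)^{H_n}$ exists.

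There is a second, more representation-theoretic route that stays closer to the machinery of this paper, which I would mention as an alternative. For any Hopf Galois extension $A(B)^{H_n}$ the space $A$ is a right-right Yetter-Drinfeld module over $H_n$ via $ah=\kappa^1(h)a\kappa^2(h)$ together with its original coaction, and this was established earlier without any cleftness assumption. Since $A$ is finite dimensional, the cited result \cite{RS} forces the coaction to be trivial, \ie $\rho(a)=a\ot 1_{H_n}$; but then $B=A$, so $A\ot_B A\cong A$ and the image of $can$ lands inside $A\ot 1_{H_n}$, a proper subspace of $A\ot H_n$ because $H_n\neq k$. This again contradicts surjectivity of $can$.

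I do not expect any genuine obstacle here: the whole argument reduces to the dimension incompatibility forced by the isomorphism $A\ot_B A\cong A\ot H_n$. The only points that need to be stated explicitly are that $H_n$ is infinite dimensional, which is standard for the Connes-Moscovici Hopf algebra, and that $A$ is nonzero, which follows from unitality of a comodule algebra. Consequently the real content of the statement is simply the recognition that finiteness of $A$ is incompatible with an infinite dimensional structure Hopf algebra $H_n$.
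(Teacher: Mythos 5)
Your primary argument is correct, and it takes a genuinely different route from the paper's. You kill the canonical map by a dimension count: $A\ot_B A$ is a quotient of $A\ot_k A$, hence of dimension at most $(\dim_k A)^2$, while $A\ot H_n$ is infinite dimensional because $A\neq 0$ and $H_n$ is infinite dimensional, so $can$ can never be bijective. The paper instead argues representation-theoretically, in line with its theme: by \cite{JS}, for any Hopf Galois extension $A(B)^{H_n}$ the quotient $A_B=A/[A,B]$ carries a left-right \emph{stable} anti Yetter-Drinfeld module structure over $H_n$ (via the map $\kappa$), and by the classification of \cite{RS} the only finite dimensional SAYD module over $H_n$ is the ground field $\mathbb{C}$, which is of right-left type; since $\dim_k A<\infty$ forces $\dim_k A_B<\infty$, this gives the contradiction. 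What your route buys: it is elementary, it proves the statement verbatim for \emph{any} infinite dimensional Hopf algebra in place of $H_n$, and it sidesteps points the paper leaves implicit (notably that $A_B\neq 0$, which in characteristic zero follows from $1\notin[A,A]$ by a trace argument, and the passage between left-right and right-left SAYD conventions). What the paper's route buys: it exhibits the classification-by-SAYD-coefficients machinery that the paper is advertising, and in principle it excludes more, namely any Hopf Galois extension whose induced module $A_B$ is finite dimensional and nonzero, even when $A$ itself is infinite dimensional.

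Two small caveats on your write-up. First, "$A\neq 0$ follows from unitality" is really the convention $1_A\neq 0$ rather than a consequence: for the degenerate algebra $A=0$ both sides of $can$ vanish and the map is trivially bijective, so the nonzero convention should be stated explicitly (the paper's proof needs it too). Second, your alternative argument is not the paper's proof either (the paper uses the SAYD module $A_B$, not the YD module $A$), and it leans on the paper's claim that $ah=\kappa^1(h)a\kappa^2(h)$ defines an action on $A$ for an arbitrary, not necessarily cleft, extension; well-definedness of this formula on $A$ (as opposed to on $A_B$, where commutators with $B$ are killed) is delicate, since $x\ot_B y\mapsto xay$ need not factor through $A\ot_B A$. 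Keep the dimension count as the proof of record.
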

\begin{proof}
   It is known \cite{RS}  that   the Connes-Moscovici Hopf algebra admits only one finite dimensional stable anti Yetter-Drinfeld  module which is the ground field $\mathbb{C}$ by the right action and left coaction defined by  the modular pair  in involution $(\delta,1)$  \cite{CM98}. One notes that the only SAYD on the ground filed is of the type right-left. But we showed that  $A_B$ is a left-right stable anti Yetter-Drinfeld module  which is a contradiction.

\end{proof}

\begin{proposition}
Let $H_n$ be the Connes-Moscovici Hopf algebra and $ C$ be a cocommutative $H_n$-module coalgebra with non-trivial $H_n$-coaction. Then there is no finite dimensional right  Hopf Galois coextension $C(D)_{H_n}$.
\end{proposition}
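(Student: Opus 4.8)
The plan is to argue by contradiction, dualizing the strategy of the preceding proposition but using cocommutativity of $C$ to pin the relevant stable anti Yetter-Drinfeld module down to $C$ itself. Suppose a finite dimensional right Hopf Galois coextension $C(D)_{H_n}$ existed. By Lemma \ref{lem-comod} the module coalgebra $C$ then carries the $H_n$-coaction $c\longmapsto c\ps{2}\ot \kappa(c\ps{1}\ot c\ps{3})$, which is precisely the coaction referred to in the statement; by hypothesis it is non-trivial.

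The first and decisive step is to show that cocommutativity of $C$ forces $C^D=C$. Recall that $C^D$ is the subspace of those $c$ satisfying $c\ps{1}\varphi(\pi(c\ps{2}))=c\ps{2}\varphi(\pi(c\ps{1}))$ for all $\varphi\in D^*$, equivalently $c\ps{1}\ot \pi(c\ps{2})=c\ps{2}\ot \pi(c\ps{1})$ in $C\ot D$. When $c\ps{1}\ot c\ps{2}=c\ps{2}\ot c\ps{1}$ this identity is automatic after applying $\Id\ot\pi$, so the defining condition is vacuous and indeed $C^D=C$.

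Next I would invoke the general construction behind the SAYD propositions above: by \cite{hassanzadeh1}, for any Hopf Galois coextension (no cleftness required) the subspace $C^D$ is a right-left stable anti Yetter-Drinfeld module over $H_n$, with the original right $H_n$-action and the coaction $c\longmapsto c\ps{2}\ot \kappa(c\ps{1}\ot c\ps{3})$. Combined with $C^D=C$ this makes $C$ itself a finite dimensional right-left SAYD module over $H_n$.

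The contradiction then comes from the classification in \cite{RS}: the Connes--Moscovici Hopf algebra $H_n$ admits a unique finite dimensional SAYD module, the one-dimensional ground field $\mathbb{C}$ of right-left type attached to the modular pair in involution $(\delta,1)$ \cite{CM98}, whose coaction is trivial. Hence $C\cong\mathbb{C}$ and its coaction is trivial, contradicting the non-triviality assumed in the statement. I expect the only genuinely delicate point to be the identification $C^D=C$, which is where cocommutativity enters and where this argument departs from the extension case: there the contradiction arose from a clash of SAYD types (left-right against right-left), whereas here the types coincide, so one must instead force $C$ to collapse onto the trivial one-dimensional module, and it is precisely cocommutativity that enlarges $C^D$ to all of $C$ so that this collapse contradicts the non-trivial coaction.
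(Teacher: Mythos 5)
Your proposal is correct and follows essentially the same route as the paper: cocommutativity gives $C^D=C$, the general (non-cleft) coextension result of \cite{hassanzadeh1} makes $C$ a finite dimensional right-left SAYD module over $H_n$, and the uniqueness theorem of \cite{RS} forces $C\cong\mathbb{C}$ as an $H_n$-module-comodule. The only cosmetic difference is that you extract the contradiction from the triviality of the coaction (matching the statement's wording), whereas the paper phrases it as the action being forced to be given by $\delta$; both readings follow from the same identification of $C$ with the one-dimensional module $\mathbb{C}$.
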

\begin{proof}
  By the  results of the previous section $C^D$ is a right-left SAYD module over $H_n$. Since $C$ is cocommutative then $C^D=C$.
  Again by \cite{RS}     the Connes-Moscovici Hopf algebra admits only one finite dimensional stable anti Yetter-Drinfeld  module which is the ground field $\mathbb{C}$ by the right action and left coaction defined by  the modular pair  in involution $(\delta,1)$  \cite{CM98}. Therefore   $C^D\cong \mathbb{C}$ as a $H$-module-comodule. But this implies that the action of $H_n$ on $C^D=C$ should be trivial by $\delta$ which is a contradiction.
\end{proof}



\begin{proposition}
  Let $H$ be a Hopf algebra and $C$ be a $H$-module coalgebra. If there exists a coaction of $H$ that turns  $D$ into an $H$-comodule coalgebra such that $C\cong D  \# H$ as $H$-module coalgebras then

\begin{enumerate}[i)]

\item $C$ is a right-right Yetter-Drinfeld module over $H$.

\item $C^D$ is a right-left stable anti Yetter-Drinfeld module over $H$.

\end{enumerate}

\end{proposition}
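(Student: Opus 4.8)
The plan is to reduce the statement to the coextension results already established for total cointegrals which are coalgebra maps, by exhibiting an explicit convolution-invertible total cointegral built from the smash-coproduct identification $C \cong D \# H$. First I would unpack the hypothesis: the isomorphism $C \cong D \# H$ of $H$-module coalgebras, where $D$ is an $H$-comodule coalgebra, gives a concrete handle on both the $H$-action and the coalgebra structure of $C$. The key observation I expect to need is that under this identification the natural projection/coaction data yield a total cointegral $f\colon C\longrightarrow H$ which is in fact a coalgebra map; concretely, writing $D\#H$ with its crossed (smash) coproduct and transporting along the isomorphism, the map $f$ that reads off the $H$-component should be counital (\ie $\varepsilon(f(c))=\varepsilon(c)$), $H$-linear for the right action ($f(ch)=f(c)h$), and multiplicative for the coproduct in the sense required of a coalgebra map. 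The smash-coproduct formula makes each of these a routine bookkeeping check rather than a genuine obstruction.

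Once $f$ is shown to be a coalgebra-map total cointegral, part (i) is immediate from Proposition \ref{YD-coextension}: that proposition says any total cointegral which is a coalgebra map endows $C$ with a right-right Yetter-Drinfeld module structure over $H$, via the original action together with the coaction $c\longmapsto c\ps{2}\ot f^{-1}(c\ps{1})f(c\ps{3})$. So I would simply invoke it, having supplied the required $f$.

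For part (ii) the strategy is parallel but uses the coextension stability results. The isomorphism $C\cong D\#H$ should identify the Hopf Galois coextension $C(D)_H$ as a \emph{cleft} one, with the cleaving cointegral being precisely the $f$ constructed above (a coalgebra-map total cointegral is automatically convolution invertible with $f^{-1}=S\circ f$, as recalled in the Preliminaries, hence yields cleftness). Then I would apply the proposition stating that for a cleft Hopf Galois coextension the subspace $C^D$ is a right-left \emph{stable} anti Yetter-Drinfeld module under the coaction $c\longmapsto c\ps{2}\ot f^{-1}(c\ps{1})f(c\ps{3})$. The stability, which does not come for free from a mere coalgebra-map cointegral, is supplied here by the cleft-coextension machinery (Lemma \ref{inverse-co} together with the corresponding proposition), so I would lean on that rather than verify $c\ps{2}f^{-1}(c\ps{3})f(c\ps{1})=c$ by hand.

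The main obstacle I anticipate is the first step: verifying carefully that the smash-product decomposition $C\cong D\#H$ genuinely produces a \emph{coalgebra-map} total cointegral and that it identifies the coextension as cleft. One must be attentive to which convention of smash coproduct is in force and to the interaction between the $H$-comodule-coalgebra structure on $D$ and the coproduct of $C$; in particular, checking that the candidate $f$ respects the coproduct (not merely the counit and the module structure) is the delicate point, since an error there would break the applicability of Proposition \ref{YD-coextension}. After that single verification, both (i) and (ii) follow by direct citation of the earlier results.
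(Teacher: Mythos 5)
Your strategy is close in spirit to the paper's (both reduce the statement to the Section 3 results), and part (i) of your argument is correct and in fact a little more economical than the paper's. The projection $f\colon C\cong D\#H\longrightarrow H$, $d\#h\longmapsto \varepsilon(d)h$, is indeed a counital, right $H$-linear coalgebra map: the only non-obvious point, compatibility with the coproducts, comes down to the comodule-coalgebra counit axiom $\varepsilon(d\ns{0})d\ns{1}=\varepsilon(d)1_H$ applied inside the smash-coproduct formula. Proposition \ref{YD-coextension} then gives (i) with no reference to any Galois property. The paper argues differently: it quotes the theorem of \cite{dmr} that the coextension $C(D)_H$ is cleft if and only if $C\cong D\#_{\sigma}H$ as $H$-module coalgebras (your hypothesis being the trivial-cocycle case), and then deduces both (i) and (ii) from the cleft-coextension propositions.

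The genuine gap is in your part (ii). In this paper ``cleft'' is by definition a property of a \emph{Hopf Galois} coextension: one first assumes the canonical map $can\colon C\ot H\longrightarrow C\Box_D C$ is bijective, and only then asks for a convolution-invertible total cointegral; the statement in the Preliminaries that you appeal to reads ``any \emph{Hopf Galois} coextension with a total cointegral which is a coalgebra morphism is a cleft coextension.'' So your inference ``$f$ is convolution invertible, hence the coextension is cleft'' is not valid as written: nothing in your argument establishes bijectivity of $can$. This is not a formality, because the stable anti-Yetter-Drinfeld structure on $C^D$ that you want to invoke is built in \cite{hassanzadeh1} from the map $\kappa=(\varepsilon\ot\Id)\circ can^{-1}$; without the Galois property that proposition does not apply, and stability is exactly the content of (ii). To close the gap you must either cite \cite{dmr} as the paper does (in that theorem the smash/crossed coproduct decomposition yields a Galois, indeed cleft, coextension), or prove bijectivity of $can$ directly from your $f$: the candidate inverse is $c\ot c'\longmapsto c\ps{1}\ot f^{-1}(c\ps{2})f(c')$, and while composing it after $can$ and getting the identity on $C\ot H$ is exactly the computation of Lemma \ref{inverse-co} (so $can$ is injective), the surjectivity of $can$ is an additional verification using the cotensor condition and the smash-coproduct structure --- precisely the step your proposal skips.
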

\begin{proof}
It was shown in \cite{dmr} that the Hopf Galois coextension $C(D)_H$ is cleft if and only if $C\cong D \#_{\sigma} H$ as $H$-module coalgebras   where $ D \#_{\sigma} H$ is the crossed coproduct with respect to cocycle $\sigma\colon  C\longrightarrow H\ot H$.
 Then both (i) and (ii) are followed by our results in the previous section.
\end{proof}

Dually we have the following result.
\begin{proposition}
  Let $H$ be a Hopf algebra and $A$ be a $H$-comodule algebra. If there exists an action of $H$ that turns $B$ into an $H$-module algebra such that $A\cong B  \# H$ as $H$-comodule algebras then

\begin{enumerate}[i)]

\item $A$ is a right-right Yetter-Drinfeld module over $H$.

\item $A_B$ is a left-right stable anti Yetter-Drinfeld module over $H$.

\end{enumerate}

\end{proposition}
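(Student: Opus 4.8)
The plan is to reduce this statement to the cointegral results already established, by passing through the well-known duality between crossed products and crossed coproducts. First I would recall the dual version of the result of \cite{dmr} cited in the previous proposition: a Hopf Galois extension $A(B)^H$ is cleft if and only if $A\cong B\#_\sigma H$ as $H$-comodule algebras, where $B\#_\sigma H$ is the crossed product with respect to a cocycle $\sigma\colon H\ot H\longrightarrow B$. The hypothesis gives $A\cong B\#H$ as $H$-comodule algebras, i.e.\ the crossed product with trivial cocycle; since the trivial cocycle is in particular a cocycle, this exhibits $A(B)^H$ as a cleft Hopf Galois extension. Hence there is a convolution invertible total integral $f\colon H\longrightarrow A$, and we are precisely in the setting of the cleft-extension propositions of Section 3.

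Once cleftness is in hand, both conclusions follow directly. For part (i), the proposition stating that for a cleft (indeed any) Hopf Galois extension $A$ is a right-right Yetter-Drinfeld module over $H$ applies verbatim, giving the YD structure on $A$ by the original coaction together with the action $ah=f^{-1}(h\ps{1})af(h\ps{2})$. For part (ii), the proposition asserting that $A_B=A/[A,B]$ is a left-right \emph{stable} anti Yetter-Drinfeld module over $H$ for a cleft Hopf Galois extension applies verbatim, with the left action $ha=f(h\ps{2})af^{-1}(h\ps{1})$; the stability is automatic precisely because the extension is cleft, which is the whole point emphasized earlier in the paper.

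The only genuine content, and the step I expect to be the main obstacle, is the identification of the trivial-cocycle smash product $B\#H$ with a cleft extension, i.e.\ verifying that the isomorphism $A\cong B\#H$ of $H$-comodule algebras indeed produces a convolution invertible total integral. This is the comodule-algebra analogue of the implication used for coextensions in the previous proposition, so the cleanest approach is simply to invoke the dual of \cite{dmr}; if one wanted to be self-contained, one would exhibit the section $H\longrightarrow B\#H$, $h\longmapsto 1_B\#h$, check it is a unital right $H$-comodule map (hence a total integral), and verify its convolution inverse using the trivial cocycle. Everything after that is a formal transport of the two cleft-extension propositions along the isomorphism, so no further computation is required.
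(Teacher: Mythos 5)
Your proposal is correct and follows essentially the same route as the paper: the paper's own proof simply cites \cite{DT} and \cite{BM} for the equivalence ``$A(B)^H$ is cleft if and only if $A\cong B\#_{\sigma}H$ as $H$-comodule algebras,'' specializes to the trivial cocycle, and then appeals to the earlier cleft-extension propositions for the YD structure on $A$ and the stable AYD structure on $A_B$. Your additional remark about exhibiting the total integral $h\longmapsto 1_B\# h$ explicitly is a harmless (and welcome) elaboration of the same argument.
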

\begin{proof}
  This is the result of the fact that by \cite{DT} and  \cite{BM}  the Hopf Galois extension $A(B)^H$ is cleft if and only if $A\cong B \#_{\sigma} H$ as $H$-comodule algebras.
\end{proof}



\begin{proposition}
  Let $\mathfrak{g}$ be a Lie algebra over a field $k$ and $A$ be a $U(\mathfrak{g})$-comodule algebra. If there exists a map $\lambda\colon  \mathfrak{g}\longrightarrow A$ such that $\rho(\lambda(x))=\lambda(x)\ot 1+1\ot x$ and $\lambda([x,y])=\lambda(x)\lambda(y)-\lambda(y)\lambda(x)$ for all $x,y\in \mathfrak{g}$. Then
\begin{enumerate}[i)]

\item $A$ is a right-right Yetter-Drinfeld module over $U(\mathfrak{g})$.


\item $A_B$ is a left-right  anti Yetter-Drinfeld module over $U(\mathfrak{g})$.
\end{enumerate}
\end{proposition}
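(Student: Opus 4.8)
The plan is to use the two hypotheses on $\lambda$ to manufacture a total integral $f\colon U(\mathfrak{g})\longrightarrow A$ that is an algebra map, and then to read off (i) and (ii) directly from Propositions \ref{YD-extension} and \ref{lr-anti}.

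First I would observe that the relation $\lambda([x,y])=\lambda(x)\lambda(y)-\lambda(y)\lambda(x)$ says exactly that $\lambda$ is a homomorphism of Lie algebras from $\mathfrak{g}$ into $A$, where $A$ is equipped with its commutator bracket. By the universal property of the enveloping algebra, $\lambda$ extends uniquely to a unital algebra map $f\colon U(\mathfrak{g})\longrightarrow A$ with $f|_{\mathfrak{g}}=\lambda$ and $f(1)=1$. Since $f$ is an algebra map, the preliminaries guarantee that $f^{-1}=f\circ S$ is a convolution inverse, so the formulas in the two target propositions make sense.

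Next I would check that $f$ is a total integral, that is, a unital right $H$-comodule map for $H=U(\mathfrak{g})$. Unitality is immediate. For the comodule condition I would first verify $\rho(f(h))=f(h\ps{1})\ot h\ps{2}$ on the generators $x\in\mathfrak{g}$: as $\Delta(x)=x\ot 1+1\ot x$ and $f(1)=1$, the right-hand side equals $\lambda(x)\ot 1+1\ot x$, which is precisely the prescribed coaction $\rho(\lambda(x))$. To promote this from generators to all of $U(\mathfrak{g})$, the key remark is that both $\rho\circ f$ and $(f\ot\Id)\circ\Delta$ are algebra homomorphisms $U(\mathfrak{g})\longrightarrow A\ot U(\mathfrak{g})$: the former because $\rho$ and $f$ are each algebra maps (here one uses that $A$ is a comodule algebra, so $\rho$ is multiplicative), the latter because $\Delta$ and $f\ot\Id$ are. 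Two algebra maps agreeing on $1$ and on an algebra-generating set agree everywhere, and $\mathfrak{g}$ generates $U(\mathfrak{g})$; hence $\rho\circ f=(f\ot\Id)\circ\Delta$ throughout, and $f$ is a total integral which is an algebra map.

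With $f$ in hand, part (i) is exactly Proposition \ref{YD-extension}, which turns any algebra-map total integral into a right-right Yetter--Drinfeld structure on $A$ via $ah=f^{-1}(h\ps{1})af(h\ps{2})$, and part (ii) is exactly Proposition \ref{lr-anti}, which makes $A_B=A/[A,B]$ a left-right anti Yetter--Drinfeld module via $ha=f(h\ps{2})af^{-1}(h\ps{1})$. I expect the only genuine content to be the comodule-map verification, and within it the single nontrivial step is the passage from generators to the whole enveloping algebra; this is precisely where the ``two algebra maps agreeing on generators'' argument does the work, and it is where the primitivity of the elements of $\mathfrak{g}$, encoded in $\rho(\lambda(x))=\lambda(x)\ot 1+1\ot x$, becomes indispensable.
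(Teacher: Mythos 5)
Your proposal is correct and follows essentially the same route as the paper: manufacture a total integral $f\colon U(\mathfrak{g})\longrightarrow A$ which is an algebra map, then read off (i) and (ii) from Propositions \ref{YD-extension} and \ref{lr-anti}. The only difference is that the paper outsources the extension step to \cite{bell}, whereas you prove it directly --- the universal property of $U(\mathfrak{g})$ gives the algebra map, and the observation that $\rho\circ f$ and $(f\ot\Id)\circ\Delta$ are algebra maps agreeing on the primitive generators gives the comodule condition --- which is a correct, self-contained verification of exactly the fact the paper cites.
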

\begin{proof}
  By \cite{bell} the map $\lambda$ can be extended to a total integral $\lambda\colon U(\mathfrak{g})\longrightarrow A$ which is an algebra map. Then both  i) and ii)  are followed by our results in the previous section.
\end{proof}

\begin{example}{\rm
  Let $p$ be a prime number, $R$ be a commutative ring of characteristic $p$,  $G$ be cyclic group of order $p$ generated by the element $g$, $H=RG$ be the group  algebra of $G$ and $C=M_{p\times p}$ be the space of $p\times p$ matrices with entries in $R$. If  $A_{ij}$ is the standard basis for $C$ then  space $C$ is a coalgebra by $\Delta(A_{ij})= \sum_{k=0}^{p-1} A_{ik}\ot A_{kj}$, $\varepsilon(A_{ij})=\delta_{ij}$. The Hopf algebra $RG$ acts on $M_{p\times p}$ by
  \begin{equation}\label{action-ex1}
    A_{ij} g^n= A_{i+n(mod p)~ j+n(mod p)}.
  \end{equation}
    By this action $M_{p\times p}$ is a $RG$ module coalgebra. We define a total cointegral as follows,
  \begin{equation}
    f\colon  M_{p\times p}\longrightarrow RG, \quad  f(A_{ij})=\delta_{ij} g^i.
  \end{equation}
     It is easy to show that $f$ is a coalgebra map and therefore any Hopf Galois coextension $C( D)_H$ is  cleft.
     In this case, $C=M_{p\times p}$ is a right-right Yetter Drinfeld module by the action defined in \eqref{action-ex1} and the following coaction
     \begin{equation}
       A_{ij}\longmapsto \sum_{k=0}^{p-1}   \sum_{m=0}^{p-1} \delta_{ij}\delta_{kj}  A_{mk}\ot g^{k-i}.
     \end{equation}

Furthermore $ C^D$ is a stable anti Yetter-Drinfeld module over $RG$ by the action \eqref{action-ex1} and the following coaction

\begin{equation}
  A_{ij}\longmapsto \sum_{k=0}^{p-1}  \sum_{i=0}^{p-1} \delta_{kj}\delta_{im}~ g^{i-k}\ot A_{mk}.
\end{equation}

  }
\end{example}

Now we aim to explain some examples for polynomial algebras $R[x]$ as a special case of universal enveloping algebra of a one dimensional Lie algebra. In fact  $\Delta(x^n)= \Sigma_{i=0}^n {n \choose i}x^i \ot x^{n-i}$ and  $\varepsilon(x^n)=0$ for $n>0$.

\begin{example}{\rm
  Let $p$ be a prime number and  $R$ be a commutative unital ring of  characteristic $p$. We consider the polynomial algebra  $C=R[x]$ and its sub-Hopf algebra $H=R[x^p]$. Therefore $C$ is $H$-module coalgebra by multiplication. We define the total cointegral $f\colon  C\longrightarrow H$ given by

  \begin{equation}
 f(x^n)=\begin{cases}
x^n   \quad  \text{if} \quad&\text{if $p\mid n$}\\
0  \quad  \text{if} \quad   &\text{otherwise}.\\
\end{cases}
\end{equation}
 Since $R$ is of characteristics $p$ then it can be easily checked that $f$ is a coalgebra map and therefore  any Hopf Galois coextension $C(D)_H$ is  cleft. 
  Therefore $C=R[x]$ is a right-right Yetter-Drinfeld module over $R[x^p]$  by the multiplication as the action and the  following coaction,
 \begin{equation}
   x^n\longmapsto\begin{cases}
\sum_{i=j}^n\sum_{j=0}^i   {n\choose i}  {i\choose j}  x^{i-j}\ot x^{n+j-i}   \quad  \text{if} \quad&\text{if $p\mid n$}\\
0  \quad  \text{if} \quad   &\text{otherwise}.\\
\end{cases}
 \end{equation}
  Also $R[x]$ is a right-left  anti Yetter-Drinfeld module over $R[x^p]$ by the multiplication as the action and the following coaction,

\begin{equation}
   x^n\longmapsto\begin{cases}
\sum_{i=j}^n\sum_{j=0}^i   {n\choose i}  {i\choose j}  x^{i+j-n}\ot x^{i-j}   \quad  \text{if} \quad&\text{if $p\mid n$}\\
0  \quad  \text{if} \quad   &\text{otherwise}.\\
\end{cases}
 \end{equation}

 }
\end{example}

\begin{example}{\rm
 Let $R$ be a commutative ring and $H=R[x]$  with $\Delta(x^n)= \Sigma_{i=0}^n {n \choose i}x^i \ot x^{n-i}$, $\varepsilon(x^n)=0$ for $n>0$,
  and $C=R[t,s]$ with $\Delta(t^ns^m)= \Sigma_{i=0}^n {n \choose i}  t^is^m \ot t^{n-i}s^m$ for any basis element $t^ns^m$ in $R[t,s]$. The action of $H$ on $C$ is given by $t^jx^i=t^{i+j}$ for all $j\geq 0$ and $s^j x^i= 0$ if $j>0$. The action on all mixed terms are zero, i.e $( t^n s^m ) x^k=0$ if $m>0$. With respect to this action $C$ is a $H$-module coalgebra.
  We introduce the total cointegral $f\colon R[t,s]\longrightarrow R[x]$  given by
  \begin{equation}
  f(t^i s^j)= \begin{cases}
0  \quad  \text{if} \quad&\text{if $j> 0$}\\
x^i  \quad  \text{if} \quad   &\text{if $j=0$}.\\
\end{cases}
 \end{equation}

  It is easy to check that $f$ is a $H$-module coalgebra map. Then  any Hopf Galois coextension $C(D)_H$ is cleft.  The coalgebra $C=R[t,s]$ is a right-right Yetter-Drinfeld module over $R[x]$ by the action that we described and the following coaction,
  \begin{equation}
    t^ns^m\longmapsto \begin{cases}
0  \quad  \text{if} \quad&\text{if $m> 0$}\\
\sum_{i=j}^n\sum_{j=0}^i {n \choose i}{i \choose j}   t^{i-j}\ot x^{n-i-j}  \quad   \quad   &\text{if $m=0$}.\\
\end{cases}
  \end{equation}

 Furthermore $C^D$ is a right-left  anti yetter-Drinfeld module over $R[x]$ by the original action of $R[x]$ over $C=R[s,t]$ and the following coaction
 \begin{equation}
    t^ns^m\longmapsto \begin{cases}
0  \quad  \text{if} \quad&\text{if $m> 0$}\\
\sum_{i=j}^n\sum_{j=0}^i {n \choose i}{i \choose j} x^{i+j-n}\ot  t^{i-j}  \quad   \quad   &\text{if $m=0$}.\\
\end{cases}
  \end{equation}

}
\end{example}


\end{document}